\newtheorem{thm}{Theorem}[section]
\newtheorem{cor}[thm]{Corollary}
\newtheorem{lemma}[thm]{Lemma}
\newtheorem{prop}[thm]{Proposition}
\theoremstyle{remark}
\theoremstyle{definition}
\newtheorem{defn}[thm]{Definition}
\newtheorem{rmk}[thm]{Remark}
\newtheorem{exa}[thm]{Example}
\numberwithin{equation}{section}
\def\beq{\begin{equation}}
\def\eeq{\end{equation}}
\def\crash#1{}
\def\N{{\mathbb N}}
\def\Z{{\mathbb Z}}
\def\R{{\mathbb R}}
\def\C{{\mathbb C}}
\def\l{\left}
\def\r{\right}
\def\[[{\l[\l[}
\def\]]{\r]\r]}
\def\p{\prime}
\def\sgq{\sigma_q}
\def\ie{\emph{i.e.~}}
\def\ds{\displaystyle}
\def\cM{{\mathcal M}}
\def\cR{{\mathcal R}}
\def\cS{{\mathcal S}}
\def\wtilde{\widetilde}
\def\ul{\underline}
\def\ol{\overline}
\def\a{\alpha}
\def\be{\beta}
\def\de{\delta}
\def\sg{\sigma}
\def\la{\lambda}
\def\Sg{\Sigma}
\def\De{\Delta}
\def\kI{\mathfrak{I}}
\def\kJ{\mathfrak{J}}
\def\deg{\mathop{\rm deg}}
\author{Lucia Di Vizio and Charlotte Hardouin}
\date{\today}
\title{Descent for differential Galois theory of difference equations\\
Confluence and $q$-dependency
\footnotetext{Date: \today}
\footnotetext{Lucia DI VIZIO,
{Institut de Math\'{e}matiques de Jussieu,
Topologie et g\'{e}om\'{e}trie alg\'{e}briques,}
{Case 7012, 2, place Jussieu, 75251 Paris Cedex 05, France.}
{e-mail: {\tt divizio@math.jussieu.fr}.}}
\footnotetext{Charlotte HARDOUIN, Institut de Math\'{e}matiques de Toulouse,
118 route de Narbonne,
31062 Toulouse Cedex 9, France.
{e-mail: {\tt hardouin@math.ups-tlse.fr}}.}
\footnotetext{Work supported by the project ANR- 2010-JCJC-0105 01 qDIFF}}
\begin{document}
\maketitle
\bibliographystyle{alpha}

\begin{abstract}
The present paper essentially contains two results that generalize and improve
some of the constructions of \cite{HardouinSinger}. First of all, in the case of one derivation,
we prove that the parameterized Galois theory for difference equations constructed
in \cite{HardouinSinger}
can be descended from a differentially closed to an algebraically closed field.
In the second part of the paper, we show that the theory can be applied to deformations of $q$-series
to study the differential dependency
with respect to $x\frac{d}{dx}$ and $q\frac{d}{dq}$.
We show that
the parameterized difference Galois group (with respect to a convenient derivation defined in the text)
of the Jacobi Theta function
can be considered as the Galoisian counterpart of the heat equation.
\end{abstract}

\setcounter{tocdepth}{2}
\tableofcontents
\subsection*{Introduction}
\addcontentsline{toc}{section}{Introduction}

The present paper essentially contains two results that generalize and improve some of
the constructions  of \cite{HardouinSinger}. First of all, in the case of one derivation, we perform a descent of the constructions in \cite{HardouinSinger}
from a differentially closed to an algebraically closed field. The latter being much smaller, this is a good help
in the applications. In the second part of the paper, we show that the theory can be applied to deformations of $q$-series,
which appear in many settings like quantum invariants, modular forms, ..., to study the differential dependency
with respect to $x\frac{d}{dx}$ and $q\frac{d}{dq}$.
\par
In \cite{HardouinSinger}, the authors construct a specific Galois theory to study the differential relations among solutions
of a difference linear system.
To do so, they attach to a linear difference system a differential Picard-Vessiot ring, \ie a differential splitting ring, and,
therefore, a differential algebraic group, in the sense of Kolchin, which we will call the Galois $\De$-group. Roughly, this is a matrix group
defined as the zero set of algebraic differential equations.
In \cite{HardouinSinger}, both the differential Picard-Vessiot ring
and the Galois $\De$-group are proved to be well defined
under the assumption that the difference operator and the derivations commute with each other and that
the field of constants for the difference operator is differentially closed. The differential closure of a differential
field $K$ is an enormous field that contains a solution of any system of algebraic differential equations with coefficients in $K$ that has a solution in
some differential extension of $K$. When one works with $q$-difference equations,
the subfield of the field of meromorphic functions over $\C^*$ of
constants for the homothety $x\mapsto qx$ is the field of elliptic functions: its differential closure
is a very big field. The same happens for the shift $x\mapsto x+1$, whose field of constants are periodic functions.
In the applications, C. Hardouin and M. Singer prove that one can always descend the Galois $\De$-group with an \emph{ad hoc} argument.
Here we prove that, in the case of one derivation, we can actually suppose that the field of constants is algebraically closed and that the Galois $\De$-group descends
from a differentially closed field to an algebraically closed one (see Proposition \ref{prop:descgr}).
We also obtain that the properties and the results used in the applications descend to an algebraically closed field,
namely (see \S\ref{subsubsec:descent}):
\begin{itemize}
\item
the differential transcendence degree of an extension generated by a
fundamental solution matrix of the difference equation is equal to the differential dimension of the
Galois $\De$-group (see Proposition \ref{prop:degtrs});
\item
the sufficient and necessary condition for solutions of rank 1 difference equations
to be differentially transcendental (see Proposition \ref{prop:gatrans});
\item
the sufficient and necessary condition for a difference system to admit a linear differential
system totally integrable with the difference system
(see Corollary \ref{cor:intergrabilitydescent}).
\end{itemize}
The proof of the descent (see Proposition \ref{prop:pvdesc}) is based on an idea of M. Wibmer, which he used in a parallel difference setting
(see \cite{Wibmchev}, \cite{Cohn}).
The differential counterpart of his method is the differential prolongation of ideals, which goes back at least to E. Cartan, E. Vessiot, E.R. Kolchin
and more recently to B. Malgrange in the nonlinear theory, but
has never been exploited in this framework.
Differential prolongations have been used
in \cite{GranierFourier} to adapt the ideas of B. Malgrange to build a Galois theory for nonlinear $q$-difference equations, and
in \cite{diviziohardouin} to prove a comparison theorem between Granier's groupoid and the parameterized Galois group of \cite{HardouinSinger}.
So it appears that the differential prolongations are a common denominator of several differential and difference Galois theories.
\par
In the second part of the paper, we show that the theory applies to investigate the differential relations
with respect to the derivations $\frac{d}{dx}$ and $\frac{d}{dq}$
among solutions of $q$-difference equations over the field $k(q,x)$ of rational functions
with coefficients in a field $k$ of zero characteristic.
For instance, one can consider the Jacobi theta function
$$
\theta_q(x)=\sum_{n\in\Z} q^{n(n-1)/2}x^n,
$$
which is a solution of the $q$-difference equation $y(qx)=qxy(x)$. Let
$$
\ell_q=x\frac{\theta_q^\p(x)}{\theta_q(x)}
\hbox{~and~}\de=\ell_q(x)x\frac{d}{dx}+q\frac{d}{dq}.
$$
In this specific case, we show below that
the Galois $\de$-group can be considered as the Galoisian counterpart of the heat equation.
We show that, for solutions of rank $1$ equations $y(qx)=a(x)y(x)$ with $a(x)\in k(q,x)$, the following facts are equivalent (see Proposition \ref{thm:qconfluencerank1}):
\begin{itemize}
\item
$a(x) = \mu  x^r \frac{g(qx)}{g(x)}$, for some $r \in \Z$, $g \in k(q,x)$ and $\mu \in k(q)$;
\item
satisfying an algebraic $\frac{d}{dx}$-relation (over some field we are going to specify in the text below);
\item
satisfying
an algebraic $\de$-relation.
\end{itemize}
In the higher rank case, we  deduce from a general statement
a necessary and sufficient condition
on the Galois group so that a system $Y(qx)=A(x)Y(x)$ can be completed
in  a compatible (\ie integrable) way, by two
linear differential systems in $\frac{d}{dx}$ and $x\ell_q\frac{d}{dx}+q\frac{d}{dq}$
(see Proposition \ref{prop:intergrabilitydescent} and Corollary \ref{cor:qconf}).
The condition consists in the property of the group of being, up to a conjugation,
contained in the subgroup of constants of $GL$,
that is the differential subgroup of $GL$ whose points have coordinates that are annihilated
by the derivations. A result of P. Cassidy \cite{cassdiffgr} says that, for a
proper Zariski dense differential subgroup of a simple linear algebraic group, this is always the case.

{\bf Acknowledgements.}
We would like to thank D. Bertrand for the interest he has always shown in our work and his
constant encouragement, E. Hubert stimulating discussions and A. Ovchinnikov, M. Singer, M. Wibmer and the anonymous referee
for their attentive reading of the manuscript and their comments.
We are particularly indebted to M. Wibmer for the proof Proposition \ref{prop:pvdesc}.

\subsection*{Breviary of difference-differential algebra}

For the reader convenience, we briefly recall here some basic definitions of differential and difference algebra,
that we will use all along the text.
See \cite{Cohn:difference} and \cite{Levin:difference} for the general theory.

\medskip
By \emph{$(\sg,\De)$-field} we will mean a field $F$ of zero characteristic,
equipped with an automorphism $\sg$ and a set of commuting derivations
$\De=\{\partial_1,\dots,\partial_n\}$, such that $\sg\partial_i=\partial_i\sg$
for any $i=1,\dots,n$.
We will use the terms \emph{$(\sg,\De)$-ring, $\De$-ring, $\De$-field,
$\partial$-ring, $\partial$-field, for $\partial\in\De$,
$\sg$-ring, $\sg$-field, ...}, with the evident meaning, analogous to the definition of $(\sg,\De)$-field.
Notice that, if $F$ is a $(\sg,\De)$-field, the subfield $K=F^\sg$ of $F$ of the invariant elements with respect to $\sg$
is a $\De$-field. We will say that a $\De$-field, and in particular $K$, is \emph{$\De$-closed} if
any system of algebraic differential equations
in $\partial_1,\dots,\partial_n$ with coefficients in $K$ having a solution in an extension of $K$
has a solution in $K$ (see \cite{mcgrail}).
\par
A \emph{$(\sg,\De)$-extension of $F$} is a ring extension $\cR$ of $F$ equipped
with an extension of $\sg$ and of the derivations of $\De$, such that the commutativity conditions are preserved.
\par
A \emph{$(\sg,\De)$-ideal} of a $(\sg,\De)$-ring is an ideal that is invariant by both $\sg$ and
the derivations in $\De$. A \emph{maximal} $(\sg,\De)$-ideal is an ideal which is maximal with respect to the
property of being a $(\sg,\De)$-ideal. Similar definitions can be given for $\De$-ideals.
\par
A ring of \emph{$\De$-polynomials} with coefficients in $F$ is a ring of polynomials in infinitely many variables
$$
F\{X_1,\dots,X_\nu\}_{\De}:=F\l[X_{i}^{(\ul\a)}; i=1,\dots,\nu\r],
$$
equipped with the differential structure such that
$$
\partial_k(X_{i}^{(\ul\a)})=X_{i}^{(\ul\a+e_k)},
$$
for any $i=1,\dots,\nu$, $k=1,\dots,n$, $\ul\a=(\a_1,\dots,\a_n)\in\Z_{\geq 0}^n$,
with $\ul\a+e_k=(\a_1,\dots,\a_k+1,\dots,\a_n)$.
\par
We will need the ring of \emph{rational $\Delta$-functions over $GL_\nu(F)$} . This
is a localization of a ring of $\De$-polynomials in the variables $X_{i,j}$, for $ij,=1,\dots,\nu$:
$$
F\l\{X_{i,j},\,i,j=1,\dots,\nu;\frac{1}{\det(X_{i,j})}\r\}_{\De}:=
F\l\{X_{i,j};\,i,j=1,\dots,\nu\r\}_\De\l[\frac{1}{\det(X_{i,j})}\r],
$$
equipped with the induced differential structure.
We write $X$ for $(X_{i,j})$,
$\partial^{\ul\a}(X)$ for $\partial_1^{\a_1}\cdots\partial_n^{\a_n}(X)$
and $\det X$ for $\det(X_{i,j})$.
If $\Delta$ is empty, the ring $F\{X,\det X^{-1}\}_{\De}$ is nothing else than the ring of rational functions
$F[X,\det X^{-1}]$.
By \emph{$\De$-relation (over $F$)} satisfied by a given matrix $U$ with coefficients in a $(\sg,\De)$-extension $R$ of $F$,
we mean
an element of $F\{X,\det X^{-1}\}_\De$
that vanishes at $U$. The $\De$-relations satisfied by a chosen $U$ form a $\De$-ideal.

\section{Differential Galois theory for difference equations}

\subsection{Introduction to differential Galois theory for difference equations}
\label{sec:rapdifgal}

In this section, we briefly recall some results of \cite{HardouinSinger}.
Let $F$ be a field of zero characteristic equipped with an automorphism
$\sg$.
We denote the subfield of $F$ of all
$\sg$-invariant elements by $K = F^{\sg}$.

\begin{defn}
A \emph{($\sg$-)difference module $\cM=(M,\Sg)$ over $F$}
(also called \emph{$\sg$-module over $F$} or a \emph{$F$-$\sg$-module}, for short)
is a finite-dimensional $F$-vector space $M$ together with a $\sg$-semilinear bijection
$\Sg:M \rightarrow M$ \ie a bijection $\Sg$ such that
$\Sg(\lambda m)=\sg(\la)\Sg(m)$ for all $(\la,m) \in F \times M$.
\end{defn}

\par
One can attach a $\sg$-difference module $\cM_A:=(F^\nu,\Sg_A)$, with
$\Sg_A: F^\nu \rightarrow F^\nu$, $Y\mapsto A^{-1}\sg(Y)$,
to a $\sg$-difference system
\beq\label{eq:firstsys}
\sg(Y)=AY,\hbox{~with $A\in GL_\nu(F)$, for some $\nu\in\Z_{>0}$,}
\eeq
so that the horizontal
(\ie invariant) vectors
with respect to $\Sg_A$ correspond to the solutions of $\sg(Y)=AY$.
Conversely, the choice of an $F$-basis $\ul e$ of a difference module $\cM$
leads to a $\sg$-difference system $\sg(Y)=AY$, with $A\in GL_\nu(F)$,
which corresponds to the equation for the horizontal
vectors of $\cM$ with respect to $\Sg_A$ in the chosen basis.

\par
We define a \emph{morphism of ($\sg$-)difference modules over $F$}
to be an $F$-linear map between the underlying $F$-vector spaces,
commuting with the $\sg$-semilinear operators.
As defined above, the ($\sg$-)difference modules over $F$ form a Tannakian category
(see \cite{Delgrotfest}), \ie a category equivalent over the algebraic closure
$\overline{K}$ of $K$ to the category of finite-dimensional representations of an affine group scheme.
The affine group scheme corresponding to the sub-Tannakian category generated
by the($\sg$-)difference module $\cM_A$, whose non-Tannakian construction we are going to sketch below,
is called  Picard-Vessiot group of \eqref{eq:firstsys}.
Its structure measures the algebraic
relations satisfied by the solutions of  \eqref{eq:firstsys}.
\par
Let $\Delta:=\{ \partial_1,...,\partial_n\}$ be a set of commuting derivations of
$F$ such that, for all $i=1,...,n$, we have $\sg \circ \partial_i =\partial_i \circ \sg$.
In \cite{HardouinSinger}, the authors proved,
among other things,
that the category of $\sg$-modules carries also a
$\Delta$-structure \ie it is a differential
Tannakian category as defined by A. Ovchinnikov in \cite{ovchinnikovdifftannakian}.
The latter is equivalent to a category of finite-dimensional
representations of a differential  group scheme (see \cite{kolchindiffalg}),
whose structure measures the differential
relations
satisfied by the solutions of the $\sg$-difference modules.
In the next section, we describe the Picard-Vessiot approach to the theory in \cite{HardouinSinger}
(in opposition to the differential Tannakian approach), \ie the construction
of minimal rings containing the solutions  of $\sg(Y)=AY$ and their derivatives with respect to $\Delta$,
whose automorphism group is a concrete incarnation of the differential group scheme
defined by the differential
Tannakian equivalence.
We will implicitly consider the usual
Galois theory of ($\sg$-)difference equations by allowing $\De$
to be the empty set (see for instance \cite{vdPutSingerDifference}): we will
informally refer to this theory and the objects considered in it as classical.

\subsubsection{$\De$-Picard-Vessiot rings}
\label{subsubsec:DPV}

Let $F$ be a $(\sg,\De)$-field as above, with $K=F^\sg$.
Let us consider a $\sg$-difference system
\beq\label{eq:sys}
\sg(Y)=AY,
\eeq
with $A\in GL_\nu(F)$, as in \eqref{eq:firstsys}.

\begin{defn}[Def. 6.10 in \cite{HardouinSinger}]\label{defn:PV}
A $(\sg,\De)$-extension
$\cR$ of $F$ is a \emph{$\Delta$-Picard-Vessiot extension for \eqref{eq:sys}} if
\begin{enumerate}
\item
$\cR$ is a simple $(\sg,\De)$-ring \ie it has no nontrivial ideal stable under both $\sg$ and $\Delta$;

\item
$\cR$ is generated as a $\Delta$-ring by $Z \in GL_\nu(\cR)$ and $\frac{1}{det(Z)}$,
where $Z$ is a fundamental solution matrix of \eqref{eq:sys}.
\end{enumerate}
\end{defn}

\par
One can formally construct such an object as follows.
We consider the ring of rational $\Delta$-fucntions
$F\{X,\det X^{-1}\}_{\De}$.  We want to equip it with a structure of a $(\sg,\De)$-algebra, respecting the
commutativity conditions for $\sg$ and the $\partial_i$'s.
Therefore, we set $\sg(X)=A X$ and
\beq\label{eq:sigmastructure}
\begin{array}{rcl}
\sg( X^{\ul\a})
&=&\sg(\partial^{\ul\a}X)=\partial^{\ul\a}(\sg(X))=\partial^{\ul\a}(AX)\\~\\
&=&\ds\sum_{ \underline{i} +\underline{j}=\ul\a}
{\a_1\choose i_1}\cdots{\a_n\choose i_n}
\partial^{\underline{i}}(A) X^{\underline{j}},
\end{array}
\eeq
for each multi-index $\ul\a=(\a_1,\dots,\a_n)\in \N^n$.
Then the quotient $\cR$ of $F\{X,\det X^{-1}\}_{\De}$
by a maximal $(\sg,\De)$-ideal
obviously satisfies the conditions of the definition above and hence is
a $\De$-Picard-Vessiot ring. Moreover, it has the following properties:

\begin{prop}[Propositions 6.14 and 6.16 in \cite{HardouinSinger}]
If the field $K$ is $\Delta$-closed, then:
\begin{enumerate}
\item
The ring of constants $\cR^\sg$ of a $\Delta$-Picard-Vessiot ring $\cR$ for \eqref{eq:sys}
is equal to $K$, \ie there are no new constants with respect to $\sg$, compared to $F$.

\item
Two $\Delta$-Picard-Vessiot rings for \eqref{eq:sys} are isomorphic as $(\sg,\De)$-rings.
\end{enumerate}
\end{prop}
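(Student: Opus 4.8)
The statement bundles together a \emph{no new constants} result, part (1), and a \emph{uniqueness} result, part (2). The plan is to prove (1) first and then obtain (2) from it by a torsor argument, so that essentially all the difficulty is concentrated in (1). Throughout, write $C:=\cR^\sg$ for the ring of $\sg$-constants; since $\sg$ and the $\partial_i$ commute, $C$ is a $\De$-subring of $\cR$ containing $K$, and the goal of (1) is exactly $C=K$.

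For (1) I would first record that $\cR$ is reduced: its nilradical is stable under $\sg$ and under every $\partial_i$ (in characteristic zero a derivation maps nilpotents to nilpotents), hence is a $(\sg,\De)$-ideal and so is $0$ by simplicity. The crucial step is then to show that $C$ is a $\De$-field, and here I would exploit the prolongation mechanism the paper emphasizes. For $0\neq c\in C$, the $\De$-ideal $[c]$ of $\cR$ generated by $c$ and all its derivatives is automatically $\sg$-stable, because each generator is itself a $\sg$-constant: $\partial^{\ul\a}(c)=\partial^{\ul\a}(\sg(c))=\sg(\partial^{\ul\a}(c))$, and $\sg$ is a ring automorphism. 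Thus $[c]$ is a nonzero $(\sg,\De)$-ideal and equals $\cR$ by simplicity. Converting this into invertibility of $c$ inside $C$ is the delicate point: I would combine it with the structure of simple $(\sg,\De)$-rings as a finite product of $\De$-domains cyclically permuted by $\sg$ (so that $C$ is a $\De$-domain) to force every nonzero element of $C$ to be a unit, i.e. $C$ to be a $\De$-field.

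Once $C$ is known to be a $\De$-field I would invoke the hypothesis that $K$ is $\De$-closed. Since $\cR$ is generated over $F$ as a $\De$-ring by the finitely many entries of $Z$ and by $1/\det Z$, one extracts that $C$ is finitely $\De$-generated over $K$; this finiteness I expect to require a Wronskian-type or linear-disjointness estimate bounding the constants produced by a fundamental matrix. A finitely $\De$-generated $\De$-field extension of $K$ is the $\De$-fraction field of a nonzero finitely $\De$-generated $\De$-$K$-algebra, and because $K$ is $\De$-closed the differential Nullstellensatz furnishes a $\De$-$K$-algebra homomorphism of that algebra into $K$, whose kernel is a $\De$-ideal. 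Pushing this through the $\De$-simplicity coming from $C$ being a field forces $C=K$, completing (1).

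For (2), let $\cR_1,\cR_2$ be two $\De$-Picard--Vessiot rings with fundamental matrices $Z_1,Z_2$. I would form the $(\sg,\De)$-ring $\cR_1\otimes_F\cR_2$, choose a maximal $(\sg,\De)$-ideal $\mathfrak P$, and set $\cT:=(\cR_1\otimes_F\cR_2)/\mathfrak P$, a simple $(\sg,\De)$-ring into which both $\cR_i$ map injectively, each kernel being a $(\sg,\De)$-ideal of the simple ring $\cR_i$. As $\cT$ is again a simple $(\sg,\De)$-ring finitely $\De$-generated over $F$, part (1) applies and gives $\cT^\sg=K$. In $\cT$ both $Z_1$ and $Z_2$ solve $\sg(Y)=AY$, so $M:=Z_1^{-1}Z_2$ satisfies $\sg(M)=M$, whence $M\in GL_\nu(\cT^\sg)=GL_\nu(K)\subseteq GL_\nu(\cR_1)$. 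Therefore $Z_2=Z_1M$ lies in $GL_\nu(\cR_1)$ and, $M$ and $M^{-1}$ being $F$-rational, the image of $\cR_2$ in $\cT$ is contained in that of $\cR_1$; the symmetric argument using $M^{-1}$ gives the reverse inclusion. The two images thus coincide, and since both maps are injective we conclude $\cR_1\cong\cR_2$ as $(\sg,\De)$-rings. The main obstacle, as the plan makes clear, is entirely in (1): the prolongation remark cleanly yields $[c]=\cR$, but turning this into the $\De$-field property of $C$ and securing the finite $\De$-generation needed to apply $\De$-closedness is where the real content lies, whereas (2) is then the soft torsor argument above.
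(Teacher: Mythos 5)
First, a remark on the comparison itself: this proposition is not proved in the paper at all --- it is recalled verbatim from \cite{HardouinSinger} (Propositions 6.14 and 6.16), so the only meaningful comparison is with the proofs given there. Your part (2) is the standard argument and is essentially correct: tensor the two rings over $F$, quotient by a maximal $(\sg,\De)$-ideal, embed both rings using their simplicity, and use part (1) to conclude that $M=Z_1^{-1}Z_2$ lies in $GL_\nu(K)$. Note only that you need part (1) for an arbitrary simple $(\sg,\De)$-ring finitely $\De$-generated over $F$ (your ring $\cT$ is generated by \emph{two} fundamental matrices, so it is not literally a $\De$-Picard--Vessiot ring); this stronger form is indeed what \cite{HardouinSinger} prove, but it means all the weight rests on (1).

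The genuine gap is in part (1), at both of its key steps. (a) Your argument that $C=\cR^\sg$ is a $\De$-field does not work: from $[c]=\cR$ you only get $1=\sum_{\ul\a} r_{\ul\a}\,\partial^{\ul\a}(c)$, which does not make $c$ invertible, and the decomposition into domains permuted by $\sg$ only makes $C$ a domain. The structural difficulty is exactly that, unlike the pure difference case where $c\cR$ is itself a $\sg$-ideal (so simplicity makes nonzero constants units), here $c\cR$ is not $\De$-stable; you flag this as ``delicate'' but supply no substitute argument. (b) More seriously, your final deduction is a non sequitur: a finitely $\De$-generated $\De$-field extension of a $\De$-closed field $K$ need \emph{not} equal $K$ --- for instance $K\langle t\rangle$ with $t$ differentially transcendental over $K$ --- so the mere existence of a $\De$-specialization $K\{c\}_\De\to K$ (which the differential Nullstellensatz does provide) proves nothing, and ``the $\De$-simplicity coming from $C$ being a field'' is vacuous: a field has no proper nonzero ideals, and the kernel $\mathfrak{m}$ of the specialization lives in the subalgebra $K\{c\}_\De$, not in $C$. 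What is missing is precisely the point where the $(\sg,\De)$-simplicity of $\cR$ must re-enter: one must show that $\mathfrak{m}\cR$ is a \emph{proper} ideal of $\cR$ --- it is automatically a $(\sg,\De)$-ideal, being generated by a $\De$-ideal consisting of $\sg$-constants --- so that simplicity forces $\mathfrak{m}\cR=0$, hence $\mathfrak{m}=0$, hence $c\in K$. Establishing $1\notin\mathfrak{m}\cR$ is where the real content of Proposition 6.14 of \cite{HardouinSinger} lies, and it is entirely absent from your proof; so is a justification of the finite $\De$-generation of $C$ over $K$, which you only say you ``expect'' to hold (the correct argument avoids needing it by treating one constant $c$ at a time).
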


\subsubsection{$\De$-Picard-Vessiot groups}
\label{subsubsec:difPVgr}

\emph{Until the end of \S\ref{sec:rapdifgal},
\ie all along \S\ref{subsubsec:difPVgr} and
\S\ref{sec:difdependency}, we assume that $K$ is a $\Delta$-closed field.}

\medskip
Let $\cR$ be a $\Delta$-Picard-Vessiot ring for \eqref{eq:sys}.
Notice that, as in classical Galois theory for ($\sg$-)difference equations,
the ring $\cR$ does not need to be a domain.
One can show that it is in fact the direct sum of a finite number of copies of an integral domain, therefore
one can consider the ring $L$ of total fractions of $\cR$
which is isomorphic to the product of a finite number of copies of one field (see \cite{HardouinSinger}).

\begin{defn}\label{defn:pvgr}
The group
$Gal^{\Delta} (\cM_A)$ (also denoted $Aut^{\sg,\De}(L / F)$)
of the automorphisms of $L$ that fix $F$ and commute with $\sg$ and $\De$ is called the $\Delta$-Picard-Vessiot group
of \eqref{eq:sys}. We will also call it the Galois $\De$-group of \eqref{eq:sys}.
\end{defn}

\begin{rmk}
The group $Gal^{\De}(\cM_A)$
consists in the $K$-points of a linear algebraic $\De$-subgroup of
$GL_\nu(K)$ in the sense
of Kolchin. That is a subgroup of $GL_\nu(K)$ defined by a $\De$-ideal of $K\l\{X,\det X^{-1}\r\}_{\De}$.
\end{rmk}

Below, we recall some fundamental properties of the Galois $\De$-group, which are the starting
point of proving the Galois correspondence:

\begin{prop}[Lemma 6.19 in \cite{HardouinSinger}]~
\begin{enumerate}
\item The ring $L^{Gal^{\Delta} (\cM_A)}$ of elements of $L$ fixed by $Gal^{\Delta} (\cM_A)$ is $F$.
\item Let  $H$ be an algebraic $\Delta$-subgroup of $Gal^{\Delta}(\cM_A)$.
If $L^H = F$, then $H = Gal^{\Delta} (\cM_A)$.
\end{enumerate}
\end{prop}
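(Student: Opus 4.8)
My plan is to deduce both assertions from a single structural fact, the \emph{torsor isomorphism}, and to let the standing hypothesis that $K$ is $\De$-closed enter through the differential Nullstellensatz. Set $G=Gal^{\De}(\cM_A)$ and $\cH:=(\cR\otimes_F\cR)^{\sg}$, a $K$-$\De$-algebra. Inside $\cR\otimes_F\cR$ the two fundamental matrices $Z\otimes1$ and $1\otimes Z$ both satisfy $\sg(\cdot)=A\,(\cdot)$ (here $A\otimes1=1\otimes A$ since $A$ has entries in $F$), so the entries of $U:=(Z\otimes1)^{-1}(1\otimes Z)$ are $\sg$-invariant and lie in $\cH$. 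I would check, in the standard way, that $U$ and $\det(U)^{-1}$ $\De$-generate $\cH$ over $K$ and that the induced map is an isomorphism of $(\sg,\De)$-rings $\cR\otimes_F\cR\cong\cR\otimes_K\cH$, with $\sg$ acting through the left factor on the right-hand side. The crucial input is the no-new-constants property $\cR^{\sg}=K$ from the Proposition above, which holds precisely because $K$ is $\De$-closed. From the isomorphism one reads off a coaction $\rho\colon\cR\to\cR\otimes_K\cH$, $\rho(r)=\mu^{-1}(1\otimes r)$, and identifies $G$ with the $K$-$\De$-points $\mathrm{Hom}_{K,\De}(\cH,K)$: each $\tau$ acts by $r\mapsto(\mathrm{id}\otimes\tau)\rho(r)$.

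For part (1), take $a\in L^{G}$; a routine denominator-clearing lets me assume $a\in\cR$. I would form $d:=1\otimes a-a\otimes1\in\cR\otimes_F\cR$ and use that $a\in F$ is equivalent to $d=0$, since $\cR$ is faithfully flat over the field $F$ and $F$ is the equalizer of the two coprojections. Transporting $d$ through the torsor isomorphism, $\mu^{-1}(d)=\rho(a)-a\otimes1=\sum_i r_i\otimes h_i$ with the $r_i\in\cR$ linearly independent over $K$ and $h_i\in\cH$. Evaluating at any $\tau\in G$ gives $(\mathrm{id}\otimes\tau)\mu^{-1}(d)=\tau(a)-a=0$, so $K$-linear independence of the $r_i$ forces $\tau(h_i)=0$ for every $\tau$ and every $i$. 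Now $\bigcap_{\tau}\ker\tau$ is the intersection of all maximal $\De$-ideals of $\cH$, which by the Nullstellensatz over the $\De$-closed field $K$ equals the $\De$-radical of $(0)$; and that radical is $(0)$ because $\cH\subseteq\cR\otimes_F\cR$ is reduced ($\cR$ is reduced and $F$ is a perfect field of characteristic $0$). Hence all $h_i=0$, so $d=0$ and $a\in F$.

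For part (2), I would run the correspondence backwards. The algebraic $\De$-subgroup $H\subseteq G$ is cut out by a $\De$-Hopf ideal $\kq\subseteq\cH$, and the coaction identifies the invariants as $\cR^{H}=\{r\in\cR:\rho(r)-r\otimes1\in\cR\otimes_K\kq\}$; dually $\cR^{H}$ is the ring of $\kq$-coinvariants, so it determines, and is determined by, $\kq$. Faithfully flat descent along the torsor $\cR$ makes $H\mapsto\cR^{H}$ injective on $\De$-closed subgroups; concretely, if $H\neq G$ then $\kq$ is not in the $\De$-radical, the Nullstellensatz produces $\tau_0\in G$ with $\tau_0(\kq)\neq0$, i.e. $\tau_0\notin H$, and feeding a nonzero element of $\kq$ through $\rho$ yields an element of $\cR^{H}$ outside $\cR^{G}$. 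Since $\cR^{H}\subseteq L^{H}=F$ while part (1) gives $\cR^{G}=F$, this is a contradiction. Therefore $\kq$ lies in the $\De$-radical, $\cH/\kq=\cH$, and $H=G$.

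The hard part will be the first paragraph: setting up the torsor isomorphism $\cR\otimes_F\cR\cong\cR\otimes_K\cH$ rigorously, verifying that $U$ $\De$-generates $\cH$ and that $\cH$ is $\De$-reduced and finitely $\De$-generated so that the differential Nullstellensatz genuinely separates $K$-$\De$-points, and — for part (2) — establishing the injectivity of $H\mapsto\cR^{H}$, which is the faithfully-flat-descent core of the Galois correspondence. Everything rests on the no-new-constants theorem together with the $\De$-closedness of $K$; once the torsor picture and the density of $K$-$\De$-points are in hand, parts (1) and (2) are the two formal halves of the descent argument.
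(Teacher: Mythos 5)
You should first be aware that the paper contains no proof of this statement: it is recalled verbatim as Lemma 6.19 of \cite{HardouinSinger}, whose proof (modelled on the corresponding statement in \cite{vdPutSingerDifference}) is quite different from yours: for (1) it localizes $\cR\otimes_F\cR$ at the element $d=b\otimes c-c\otimes b$, passes to a quotient by a maximal $(\sg,\De)$-ideal, and uses uniqueness of fundamental matrices up to matrices of constants to manufacture an automorphism moving $a=b/c$; for (2) it uses a Chevalley-type argument. Your torsor-plus-Nullstellensatz route is a reasonable alternative in the spirit of \cite{MauGal}, but it has one fixable flaw and one genuine gap. The fixable flaw is in (1): the ``routine denominator-clearing'' reducing to $a\in\cR$ does not exist. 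Invariance of $a=b/c$ under $G$ gives no control on $b$ and $c$ separately, the coaction $\rho$ does not extend to a map $L\to L\otimes_K\cH$ (inverting $c\otimes 1$ does not invert $\rho(c)$), and proving $\cR^G=F$ alone does not yield $L^G=F$. The repair is cross-multiplication, exactly as in the vdPS-style proof: set $d:=b\otimes c-c\otimes b\in\cR\otimes_F\cR$, check that $a\in F$ iff $d=0$, transport to $\mu^{-1}(d)=(b\otimes 1)\rho(c)-(c\otimes 1)\rho(b)\in\cR\otimes_K\cH$, and use that $\tau(a)=a$ is equivalent to $b\,\tau(c)-c\,\tau(b)=0$; with this modification your evaluation/Nullstellensatz argument for (1) goes through, granted the torsor isomorphism, reducedness, and the finite $\De$-generation of $\cH$.

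The genuine gap is (2). From the Nullstellensatz you obtain a point $\tau_0\in G(K)$ with $\tau_0(\kq)\neq 0$, i.e. $\tau_0\notin H(K)$ --- but that is just a restatement of the hypothesis $H\neq G$ of your contradiction argument, and ``feeding a nonzero element of $\kq$ through $\rho$'' produces nothing: an element $h\in\kq\subset\cR\otimes_F\cR$ is a finite sum $\sum_j b_j\otimes c_j$ whose components $b_j,c_j$ are in no way $H$-invariant, so no formal manipulation of $h$ yields an element of $L^H\smallsetminus F$. What part (2) actually requires is the converse implication: every \emph{proper} algebraic $\De$-subgroup has strictly more invariants in $L$ than $F$, equivalently $H\mapsto L^H$ is injective. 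This is the substantive core of the Galois correspondence, and your sketch asserts it (``so it determines, and is determined by, $\kq$'', ``faithfully flat descent \dots makes $H\mapsto\cR^H$ injective'') at precisely the point where it must be proved, which makes the argument circular. The known ways to supply it are Chevalley's theorem (a representation of $G$, built from linear-algebra constructions on $\cM_A$, containing a line whose stabilizer is exactly $H$; the corresponding line of solutions inside $L$ then produces $H$-invariants that any $\tau\notin H$ moves), or a Takeuchi-type faithfully-flat-descent theorem giving $\cR\otimes_{\cR^H}\cR\cong\cR\otimes_K\cH/\kq$, from which $\cR^H=F$ forces $\kq=(0)$ by comparison with $\mu$. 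Neither is carried out in your proposal, so part (2) remains unproved.
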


As we have already pointed out, the Galois $\varnothing$-group is an algebraic
group defined over $K$ and corresponds to the classical Picard-Vessiot group
attached to the $\sg$-difference system
\eqref{eq:sys} (see \cite{vdPutSingerDifference,SauloyENS}):

\begin{prop}[Proposition 6.21 in \cite{HardouinSinger}]
The algebraic $\De$-group $Gal^{\Delta} (\cM_A)$ is a Zariski dense subset of $Gal^\varnothing(\cM_A)$.
\end{prop}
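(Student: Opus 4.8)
The plan is to reduce the statement to a fixed-field computation and then to invoke the Galois correspondence for the classical ($\varnothing$-)theory, which is available as the special case $\De=\varnothing$ of the results recalled above (in particular of Lemma 6.19 in \cite{HardouinSinger}). First I would realize the classical Picard--Vessiot ring concretely inside $\cR$. Let $Z\in GL_\nu(\cR)$ be the fundamental solution matrix generating $\cR$ as a $\De$-ring, and let $S:=F[Z,\det(Z)^{-1}]$ be the $\sg$-stable $F$-subalgebra generated by $Z$ and $\det(Z)^{-1}$ \emph{without} their derivatives. The key preliminary point is that $S$ is a simple $\sg$-ring with $S^\sg=K$, hence a classical Picard--Vessiot ring for \eqref{eq:sys}; since $K$ is $\De$-closed, and so in particular algebraically closed, such a ring is unique up to isomorphism, its total ring of fractions $L_0$ embeds in $L$, and $Gal^\varnothing(\cM_A)=Aut^\sg(L_0/F)$.

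Granting this, I would check the inclusion $Gal^\Delta(\cM_A)\subseteq Gal^\varnothing(\cM_A)$ inside $GL_\nu(K)$. Any $\tau\in Gal^\Delta(\cM_A)$ satisfies $\tau(Z)=Z[\tau]$ with $[\tau]\in GL_\nu(K)$, and since the entries of $[\tau]$ lie in $K\subseteq F\subseteq S$ we get $\tau(S)\subseteq S$; thus $\tau|_{S}$ is a $\sg$-automorphism of $S$ fixing $F$, represented by the same matrix $[\tau]$. This yields an injective homomorphism $Gal^\Delta(\cM_A)\to Gal^\varnothing(\cM_A)$ which is the identity on matrices, so that the two groups are nested as subsets of $GL_\nu(K)$ and act on $L_0\subseteq L$ through the \emph{same} formula $Z\mapsto Z[\tau]$.

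Next I would compute the relevant fixed field. Let $H$ denote the Zariski closure of $Gal^\Delta(\cM_A)$ in $GL_\nu(K)$; it is an algebraic subgroup of $G:=Gal^\varnothing(\cM_A)$, and we must show $H=G$. For any $z\in L_0$ we have $\{z : \tau z=z \ \forall\tau\in Gal^\Delta(\cM_A)\}=L_0\cap L^{Gal^\Delta(\cM_A)}$; since $L^{Gal^\Delta(\cM_A)}=F$ by the first part of Lemma 6.19 recalled above, and $F\subseteq L_0$, this gives $L_0^{Gal^\Delta(\cM_A)}=F$. As $F$ is fixed by all of $G$, the sandwich $F\subseteq L_0^{H}\subseteq L_0^{Gal^\Delta(\cM_A)}=F$ forces $L_0^{H}=F$. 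Applying the Galois correspondence of the classical theory --- \ie the second part of Lemma 6.19 in the case $\De=\varnothing$, which states that an algebraic subgroup of $G$ with fixed field $F$ equals $G$ --- we conclude $H=G$, that is, $Gal^\Delta(\cM_A)$ is Zariski dense in $Gal^\varnothing(\cM_A)$.

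The main obstacle is the preliminary step, namely proving that $S=F[Z,\det(Z)^{-1}]$ is $\sg$-simple with no new constants. The difficulty is that a proper $\sg$-ideal $J$ of $S$ generates a $(\sg,\De)$-ideal of $\cR$ which, by simplicity of $\cR$, must be the unit ideal; one has to rule out that adjoining the derivatives $\partial^{\ul\a}(Z)$ collapses a nontrivial relation, \ie that $1$ already lies in the $\De$-ideal generated by $J$. I would control this through the filtration of $F\{X,\det X^{-1}\}_\De$ by the order of the derivatives, which is preserved by $\sg$ in view of \eqref{eq:sigmastructure}, together with the absence of new constants $\cR^\sg=K$ recalled above; this is the technical heart, and the point where the commutation $\sg\partial_i=\partial_i\sg$ is essential.
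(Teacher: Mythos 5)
Your overall architecture --- realizing the classical Picard--Vessiot ring inside $\cR$ as $S=F[Z,\det(Z)^{-1}]$, restricting automorphisms to obtain $Gal^{\De}(\cM_A)\subseteq Gal^{\varnothing}(\cM_A)$, computing that the fixed ring of $Gal^{\De}(\cM_A)$ in the total quotient ring of $S$ is $F$, and concluding density from the classical Galois correspondence --- is the right one, and it is the architecture of the proof in \cite{HardouinSinger} (note that the present paper states this proposition without proof, as a recalled result, so the comparison is with the cited source). The soft steps you write out are correct: $Z^{-1}\tau(Z)\in GL_\nu(K)$ uses $L^\sg=K$, the sandwich $F\subseteq L_0^{H}\subseteq L_0^{Gal^{\De}(\cM_A)}=F$ is fine, and over the algebraically closed (because $\De$-closed) field $K$ the classical correspondence applies to the Zariski closure $H$.

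The genuine gap is exactly the step you defer as ``the technical heart,'' and it is not a preliminary: the assertion that $S$ is a simple $\sg$-ring, \ie that $\kI\cap\cS_0$ is a maximal $\sg$-ideal (where $\kI$ is the defining ideal of $\cR$), is essentially the whole content of Proposition 6.21 of \cite{HardouinSinger} --- indeed \S\ref{part:desc} of this paper quotes that proposition precisely in this form, namely that all truncations $\kI\cap\cS_k$ are maximal $\sg$-ideals. Your sketch cannot produce it as stated. Simplicity of $\cR$ only tells you that every \emph{nonzero} $\sg$-ideal $J$ of $S$ generates the unit $(\sg,\De)$-ideal of $\cR$; what must be proved is that a \emph{proper} $J$ generates a \emph{proper} one, and neither the $\sg$-stability of the order filtration nor $\cR^\sg=K$ gives any handle on a relation $1=\sum_i a_i\theta_i(j_i)$ with $a_i\in\cR$ of unbounded order: no induction on order closes. (That ``no new constants'' alone is insufficient is illustrated by the paper's weak Picard--Vessiot ring $\cR_{\cM er}$, which has the right constants yet need not be simple.) Controlling properness of ideals under prolongation is a genuine theorem: it is what Lando's theory of differential kernels provides (Proposition \ref{prop:diffker}, from \cite{Landodiffker}), and it is the engine of this paper's own Proposition \ref{prop:pvdesc}, which --- combined with the uniqueness of $\De$-Picard--Vessiot rings over the $\De$-closed field $K$ --- fills your gap in the one-derivation case. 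Alternatively, one can invoke the recognition criterion for classical Picard--Vessiot rings in Chapter 1 of \cite{vdPutSingerDifference}: a reduced $\sg$-ring generated over $F$ by a fundamental matrix and the inverse of its determinant, whose \emph{total quotient ring} has $\sg$-constants equal to the algebraically closed field $K$, is automatically $\sg$-simple; this applies to $S$ because $\cR$ is a finite direct sum of domains and $L^\sg=K$, modulo checking that nonzerodivisors of $S$ stay nonzerodivisors in $\cR$. Without one of these inputs, the proof is incomplete precisely at its load-bearing step.
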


\subsubsection{Differential dependency and total integrability}
\label{sec:difdependency}

The $\Delta$-Picard-Vessiot ring $\cR$
of \eqref{eq:sys} is a $Gal^{\Delta} (\cM_A)$-torsor in the sense of Kolchin.
This implies in particular that the $\Delta$-relations satisfied
by a fundamental solution
of the $\sg$-difference system \eqref{eq:sys} are entirely determined by
$Gal^\De(\cM_A)$:

\begin{prop} [Proposition 6.29 in \cite{HardouinSinger}]\label{prop:degtrs}
The $\Delta$-transcendence degree of  $\cR$ over $F$ is equal to
the $\Delta$-dimension of $Gal(\cM_A)$.
\end{prop}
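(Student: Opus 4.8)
The plan is to deduce the equality from the torsor structure recalled just above, exactly as one proves the corresponding statement (transcendence degree equals dimension of the Galois group) in classical Picard--Vessiot theory, with $\De$-transcendence degree and $\De$-dimension replacing their algebraic counterparts. Write $G=Gal^\De(\cM_A)$ and let $K\{G\}$ be the coordinate $\De$-Hopf algebra of $G$ over $K$; by definition the $\De$-dimension $\dim_\De G$ is $\mathrm{trdeg}_\De(K\{G\}/K)$.

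First I would make the torsor property explicit as an isomorphism of $(\sg,\De)$-algebras
\[
\Phi:\ \cR\otimes_F\cR\ \xrightarrow{\ \sim\ }\ \cR\otimes_K K\{G\},
\]
induced by the action map $G\times_K\mathrm{Spec}\,\cR\to\mathrm{Spec}\,\cR\times_F\mathrm{Spec}\,\cR$, $(g,z)\mapsto(z\cdot g,z)$. Concretely $\Phi$ fixes the left factor, $a\otimes 1\mapsto a\otimes 1$, and sends the fundamental solution $1\otimes Z$ of the right factor to $(Z\otimes 1)\cdot(1\otimes\ol X)$, where $\ol X$ denotes the image in $K\{G\}$ of the generic matrix $X$ (so that $\Phi(a\otimes b)=(a\otimes 1)\cdot\rho(b)$ for the coaction $\rho(Z)=Z\otimes\ol X$). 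That $\Phi$ is an isomorphism is exactly the content of \emph{$\cR$ is a $G$-torsor}, and it is here that one uses that $\cR$ has no new constants, $\cR^\sg=K$, together with $K$ being $\De$-closed.

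Then I would compute $\De$-transcendence degrees on both sides, viewing each as a $\De$-algebra over $\cR$ through the left-hand factor, which $\Phi$ preserves. On the left, base-change invariance of $\De$-transcendence degree (the differential analogue of the classical fact $\mathrm{trdeg}(\cR\otimes_F S/S)=\mathrm{trdeg}(\cR/F)$) gives $\mathrm{trdeg}_\De(\cR\otimes_F\cR/\cR)=\mathrm{trdeg}_\De(\cR/F)$. On the right, the same invariance gives $\mathrm{trdeg}_\De(\cR\otimes_K K\{G\}/\cR)=\mathrm{trdeg}_\De(K\{G\}/K)=\dim_\De G$. Since $\Phi$ is an isomorphism of $\cR$-$\De$-algebras, the two numbers agree, which is the asserted equality $\mathrm{trdeg}_\De(\cR/F)=\dim_\De G$.

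The main obstacle is not the formal dimension count but the fact that $\cR$ is not a domain: as recalled above it is a finite direct sum of copies of a $\De$-integral domain, and $L$ a finite product of copies of a single $\De$-field $L_0$. One must therefore either define $\mathrm{trdeg}_\De(\cR/F)$ through $L_0$ and check that $G$ permutes the idempotents of $\cR$ transitively, so that all components share the same $\De$-transcendence degree, or verify directly that base-change invariance and the isomorphism $\Phi$ survive at the level of these product rings (equivalently, that $\cR\otimes_F\cR$ stays reduced with ring of $\sg$-constants $K$, which again rests on $\cR^\sg=K$ and on $K$ being $\De$-closed). Granting this bookkeeping, the torsor isomorphism $\Phi$ transports the computation to a single component and the equality follows.
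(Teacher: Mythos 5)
Your proposal is correct and takes essentially the same route as the paper: the paper imports this statement as Proposition 6.29 of \cite{HardouinSinger}, whose proof is exactly the torsor argument you give, namely the trivialization $\cR\otimes_F\cR\cong\cR\otimes_K K\{G\}$ coming from the fact that $\cR$ is a $Gal^\De(\cM_A)$-torsor (the sentence preceding the statement in the paper), followed by invariance of $\De$-transcendence degree under base change, with the non-domain issue handled through the idempotent decomposition of $\cR$ and its total ring of fractions. There is no gap to report.
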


Since the $\Delta$-subgroups of ${\mathbb G}_a^n$ coincide with the zero set of
a homogeneous
linear  $\Delta$-polynomial $L(Y_1,...,Y_n)$ (see \cite{cassdiffgr}), we have:

\begin{prop}\label{prop:gatrans}
Let  $a_1, ..., a_n \in F$ and let $S$ be a $(\sg,\Delta)$-extension of $F$ such that $S^\sg =K$. If
$z_1,...,z_n \in S$  satisfy
$\sg (z_i) -z_i =a_i$ for $i=1,...,n$,
then $z_1,...,z_n \in S$  satisfy a nontrivial $\De$-relation
over $F$ if and only if there exists a nonzero
homogeneous linear differential polynomial $L(Y_1,...,Y_n)$ with coefficients in $K$ and
an element $ f \in F$ such that
$L(a_1, ..., a_n) =\sg (f)-f$.
\end{prop}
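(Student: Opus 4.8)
The plan is to read the statement through the $\De$-Picard--Vessiot dictionary for the auxiliary $\sg$-difference system carrying the $z_i$'s, and to identify the Galois $\De$-group as a $\De$-subgroup of ${\mathbb G}_a^n$. First I would assemble $z_1,\dots,z_n$ into a fundamental solution of the rank $n+1$ system $\sg(Y)=AY$, where $A\in GL_{n+1}(F)$ is the unipotent matrix equal to the identity except for the entries $a_1,\dots,a_n$ in its last column; a fundamental solution matrix is then the unipotent matrix $Z$ of the same shape carrying $z_1,\dots,z_n$ in its last column, so that $\det Z=1$ and $\cR:=F\{z_1,\dots,z_n\}_\De\subseteq S$ needs no localization. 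Using the hypothesis $S^\sg=K$ together with the $\De$-closedness of $K$, one checks, as in \cite{HardouinSinger}, that $\cR$ is a $\De$-Picard--Vessiot ring for this system with ring of constants $K$. Every $\phi\in G:=Gal^\De(\cM_A)$ fixes $F$ and commutes with $\sg$, so from $\sg(z_i)=z_i+a_i$ one gets $\sg(\phi(z_i)-z_i)=\phi(z_i)-z_i$, whence $\phi(z_i)=z_i+c_i(\phi)$ with $c_i(\phi)\in K$; the assignment $\phi\mapsto(c_1(\phi),\dots,c_n(\phi))$ realizes $G$ as a $\De$-subgroup of ${\mathbb G}_a^n$.

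For the implication from the linear relation on the $a_i$'s to a $\De$-relation on the $z_i$'s (the easy direction), I would argue by direct computation. Suppose $L(a_1,\dots,a_n)=\sg(f)-f$ with $L$ a nonzero homogeneous linear $\De$-polynomial over $K$ and $f\in F$. Since the coefficients of $L$ lie in $K=F^\sg$ and $\sg$ commutes with the derivations, linearity gives $\sg(L(z))=L(\sg(z))=L(z_1+a_1,\dots,z_n+a_n)=L(z)+L(a_1,\dots,a_n)$, so $\sg(L(z)-f)=L(z)-f$ and therefore $L(z)-f\in S^\sg=K$. Hence $L(z)\in F$, and the nonzero $\De$-polynomial $L(Y_1,\dots,Y_n)-L(z)\in F\{Y_1,\dots,Y_n\}_\De$ vanishes at $(z_1,\dots,z_n)$: this is the required nontrivial $\De$-relation over $F$.

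The converse is the substantial direction and is where the Galois machinery enters. If the $z_i$ satisfy a nontrivial $\De$-relation over $F$, they are $\De$-algebraically dependent over $F$, so the $\De$-transcendence degree of $\cR$ over $F$ is $<n$; by Proposition \ref{prop:degtrs} this forces $\De\text{-dim}\,G<n=\De\text{-dim}\,{\mathbb G}_a^n$, so that $G$ is a proper $\De$-subgroup of ${\mathbb G}_a^n$. By Cassidy's classification \cite{cassdiffgr} of $\De$-subgroups of ${\mathbb G}_a^n$, there is then a nonzero homogeneous linear $\De$-polynomial $L$ with coefficients in $K$ vanishing identically on $G$. For $\phi\in G$ one computes $\phi(L(z))=L(z_1+c_1(\phi),\dots,z_n+c_n(\phi))=L(z)+L(c_1(\phi),\dots,c_n(\phi))=L(z)$, because $(c_1(\phi),\dots,c_n(\phi))\in G$ lies in the zero set of $L$; thus $L(z)$ is fixed by $G$ and so, by the Galois correspondence recalled above (the first assertion of the proposition quoting Lemma 6.19 of \cite{HardouinSinger}), $L(z)\in F$. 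Setting $f:=L(z)\in F$, the same computation as before yields $\sg(f)-f=\sg(L(z))-L(z)=L(a_1,\dots,a_n)$, which is exactly the sought relation.

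I expect the main obstacle to lie entirely in this converse, in two places. The first is the preliminary identification of $\cR$ as a $\De$-Picard--Vessiot ring with constants $K$: this is what legitimizes the use of Proposition \ref{prop:degtrs} and of the Galois correspondence, and it is exactly here that the hypothesis $S^\sg=K$ (no new constants) and the $\De$-closedness of $K$ must be spent. The second is the dimension count that turns a $\De$-relation into a proper subgroup and then, via Cassidy, into an explicit nonzero $L$ vanishing on $G$; once $G$ is known to be cut out inside ${\mathbb G}_a^n$ by such an $L$, landing $L(z)$ in $F$ and extracting $f$ is immediate.
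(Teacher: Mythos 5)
Your architecture is exactly the one behind the paper's proof, which consists of citing Proposition 3.1 of \cite{HardouinSinger} (the paper itself only sketches this argument later, in Corollary \ref{cor:gatransCE}): realize the $z_i$ inside a unipotent $\sg$-system, identify the Galois $\De$-group with a $\De$-subgroup of ${\mathbb G}_a^n$ via the cocycle $\phi\mapsto(c_i(\phi))$, do the easy direction by hand, and do the hard direction with the dimension count of Proposition \ref{prop:degtrs}, Cassidy's description of $\De$-subgroups of ${\mathbb G}_a^n$, and the invariance part of the Galois correspondence. Your easy direction is complete and correct, and correctly uses only $S^\sg=K$.

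The genuine gap is the sentence asserting that $\cR:=F\{z_1,\dots,z_n\}_\De\subseteq S$ is a $\De$-Picard--Vessiot ring with constants $K$. The constants part is harmless ($\cR^\sg\subseteq S^\sg=K$), but the simplicity of $\cR$ as a $(\sg,\De)$-ring is not something you can ``check as in \cite{HardouinSinger}'': a $(\sg,\De)$-subring of a no-new-constants extension generated by a fundamental solution matrix need \emph{not} be simple, which is precisely why this paper has to introduce \emph{weak} Picard--Vessiot rings such as $\cR_{\cM er}$ in Section 2 (``Notice that $\cR_{\cM er}$ does not need to be a simple $(\sgq,\partial_2)$-ring''). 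Worse, for your unipotent system, proving simplicity of $\cR$ amounts to controlling exactly the relations the proposition is classifying, so the step is circular as written; and everything downstream (Proposition \ref{prop:degtrs}, the invariants statement from Lemma 6.19 of \cite{HardouinSinger}) is stated for genuine PV rings, i.e.\ quotients by \emph{maximal} $(\sg,\De)$-ideals, so the gap propagates. You did flag this point as an obstacle, but the correct resolution is not to prove the claim; it is to avoid it by a transfer step. Let $\mathfrak{p}$ be the kernel of the evaluation map $F\{X,\det X^{-1}\}_\De\to S$, $X\mapsto Z$ (a $(\sg,\De)$-ideal), choose a maximal $(\sg,\De)$-ideal $\mathfrak{m}\supseteq\mathfrak{p}$, and set $R:=F\{X,\det X^{-1}\}_\De/\mathfrak{m}$: this is an honest $\De$-PV ring whose fundamental matrix carries entries $u_1,\dots,u_n$ with $\sg(u_i)=u_i+a_i$. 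Since $\mathfrak{p}\subseteq\mathfrak{m}$, every $\De$-relation over $F$ satisfied by the $z_i$ is satisfied by the $u_i$; in particular the $u_i$ satisfy a nontrivial one, and your dimension count, Cassidy's theorem, and the invariance argument run verbatim in $R$ to produce a nonzero $L$ over $K$ and $f:=L(u_1,\dots,u_n)\in F$ with $L(a_1,\dots,a_n)=\sg(f)-f$. Since this conclusion involves only the $a_i$, $L$ and $f$ --- not the $z_i$ --- nothing needs to be transported back to $S$, and the proof closes.
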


\begin{proof}
If $\De=\{\partial_1\}$, the proposition coincide with Proposition 3.1 in \cite{HardouinSinger}.
The proof in the case of many derivations is a straightforward generalization of their argument.
\end{proof}

The following proposition
relates the structural properties of the Galois $\De$-group
(see the remark immediately below for the definition
of constant $\De$-group) with the holonomy of a $\sg$-difference system:

\begin{prop}\label{prop:simpletrans}
The following statements are equivalent:
\begin{enumerate}

\item
The $\De$-Galois group $Gal^\Delta (\cM_A)$ is conjugate over $K$ to a constant $\De$-group.

\item
For all $i=1,...,n$, there exists a $B_i \in M_n(F)$ such that
the set of linear systems
$$
\left\{\begin{array}{l} \sg(Y)=AY \\
\partial_1 Y=B_1Y \\
\cdots\cdots\cdots\\
\partial_n Y=B_nY
\end{array} \right.
$$
is integrable, \ie the matrices $B_i$ and $A$ satisfy the
functional equations deduced from the commutativity of the operators:
$$
\partial_i(B_j) + B_jB_i =  \partial_j(B_i) + B_iB_j
\hbox{~and~}
\sg(B_j)A= \partial_j(A) +A B_j,
\hbox{~for any $i,j=1,\dots,n$.}
$$
\end{enumerate}
\end{prop}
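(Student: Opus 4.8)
The plan is to exploit the torsor structure of the $\De$-Picard-Vessiot ring together with the characterization of constant $\De$-groups. Recall that $Gal^\De(\cM_A)$ consists of $K$-points defined by a $\De$-ideal, and that being a \emph{constant} $\De$-group means the defining $\De$-ideal contains $\partial_i(X_{k,\ell})$ for all $i,k,\ell$; equivalently, the group lies inside the constant subgroup of $GL_\nu(K)$. I would first fix a $\De$-Picard-Vessiot extension $\cR$ for \eqref{eq:sys} with fundamental solution matrix $Z\in GL_\nu(\cR)$, and interpret the logarithmic derivatives $\partial_i(Z)Z^{-1}$ as the natural candidates for the matrices $B_i$, since any integrable system of the stated form must have $\partial_i Z=B_i Z$, forcing $B_i=\partial_i(Z)Z^{-1}$.

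\emph{From (1) to (2):} Suppose $Gal^\De(\cM_A)$ is conjugate over $K$ to a constant group, so after replacing $Z$ by $ZP$ for a suitable $P\in GL_\nu(K)$ we may assume the group itself is constant. I would then show that $B_i:=\partial_i(Z)Z^{-1}$ has entries in $F$. The key point is the Galois correspondence (the first part of Lemma 6.19, recalled above): an element of $L$ lies in $F$ if and only if it is fixed by every $\tau\in Gal^\De(\cM_A)$. For such $\tau$ one has $\tau(Z)=Z C_\tau$ with $C_\tau\in GL_\nu(K)$ constant, whence $\tau(B_i)=\partial_i(ZC_\tau)(ZC_\tau)^{-1}=\partial_i(Z)C_\tau C_\tau^{-1}Z^{-1}=B_i$, using $\partial_i(C_\tau)=0$ precisely because the group is constant. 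Hence each $B_i$ is $\sg$- and $\De$-equivariant and fixed by the group, so $B_i\in M_\nu(F)$. The functional equations in (2) are then automatic: differentiating $\sg(Z)=AZ$ yields the relation between $\sg(B_j)$, $A$ and $\partial_j(A)$, and the compatibility $\partial_i(B_j)+B_jB_i=\partial_j(B_i)+B_iB_j$ follows from the commutativity of the $\partial_i$ applied to $Z$.

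\emph{From (2) to (1):} Conversely, given matrices $B_i\in M_\nu(F)$ making the enlarged system integrable, I would show the fundamental matrix $Z$ can be chosen to satisfy $\partial_i(Z)=B_i Z$ simultaneously for all $i$; the integrability (Frobenius-type) conditions are exactly what guarantees that the maximal $(\sg,\De)$-ideal can be taken compatibly with these extra differential equations, so that $\partial_i(Z)Z^{-1}=B_i$ has entries in $F$. Then for $\tau\in Gal^\De(\cM_A)$, writing $\tau(Z)=ZC_\tau$, one differentiates to get $\partial_i(Z)C_\tau+Z\partial_i(C_\tau)=\tau(\partial_i Z)=\tau(B_i Z)=B_i Z C_\tau=\partial_i(Z)C_\tau$, forcing $\partial_i(C_\tau)=0$; thus every group element is constant, i.e. the group is a constant $\De$-group (in the fixed basis, hence conjugate to one in general).

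The main obstacle I anticipate is the direction (2)$\Rightarrow$(1), specifically justifying that the extra differential equations $\partial_i Y=B_i Y$ can be imposed consistently on \emph{the same} $\De$-Picard-Vessiot ring. This requires checking that the $(\sg,\De)$-ideal cutting out $\cR$ is compatible with adjoining the relations $\partial_i(X)=B_i X$, which is where the integrability conditions and the uniqueness of the $\De$-Picard-Vessiot ring (from the Proposition recalled after Definition \ref{defn:PV}, valid since $K$ is $\De$-closed) do the real work; I would likely invoke the torsor property of $\cR$ over $Gal^\De(\cM_A)$ to control the coefficients and ensure $B_i:=\partial_i(Z)Z^{-1}$ descends to $F$.
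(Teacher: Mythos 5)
Your proof is correct and follows essentially the same route as the paper, which simply invokes Proposition 2.9 of \cite{HardouinSinger}: like yours, that argument obtains (2) from (1) by taking $B_i=\partial_i(Z)Z^{-1}$ for a fundamental matrix whose Galois representation is constant and descending its entries to $F$ via the Galois correspondence, and obtains (1) from (2) by endowing $F[X,\det X^{-1}]$ with the $(\sg,\De)$-structure $\sg(X)=AX$, $\partial_i(X)=B_iX$ (consistent precisely because of the integrability conditions), passing to a maximal $(\sg,\De)$-ideal, and invoking uniqueness of $\De$-Picard--Vessiot rings over the $\De$-closed field $K$ to transfer the constancy of the resulting representation back to $Gal^{\De}(\cM_A)$ up to conjugation over $K$. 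The one superfluous element is your closing appeal to the torsor property: it is not needed, since the uniqueness of the $\De$-Picard--Vessiot ring is what does the real work in direction (2)$\Rightarrow$(1), as you yourself note.
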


\begin{proof}
The proof is a straightforward generalization of Proposition 2.9 in \cite{HardouinSinger} to the case of several derivations.
\end{proof}

\begin{rmk}\label{rmk:Deconstant}
Let $K$ be a $\De$-field and $C$ its subfield of $\De$-constants.
A linear $\De$-group $G \subset GL_\nu$ defined over $K$ is said to be
a constant $\De$-group (or $\De$-constant, for short) if one of the following equivalent statements hold:
\begin{itemize}

\item
the set of differential polynomials $\partial_h(X_{i,j})$, for $h=1,\dots,n$ and $i,j=1,\dots,\nu$,
belong to the ideal of definition of $G$
in the differential Hopf algebra $K\{X_{i,j},\frac{1}{\det\left(X_{i,j}\right)}\}_\De$ of $GL_\nu$ over $K$;

\item
the differential Hopf algebra of $G$ over $K$ is  an extension
of scalars of a finitely generated Hopf algebra over $C$;

\item
the points of $G$ in $K$ (which is $\De$-closed!) coincide the $C$-points of an algebraic group
defined over $C$.
\end{itemize}

For instance, let ${\mathbb G}_m$ be the multiplicative group defined over $K$. Its differential Hopf algebra
is $K\{x, \frac{1}{x}\}_\De$, \ie the $\De$-ring generated by $x$ and $\frac{1}{x}$.
The constant $\De$-group  ${\mathbb G}_m(C)$ corresponds to the differential Hopf algebra
$$
\frac{K\{x, \frac{1}{x}\}_\De}{(\partial_h(x); h=1,\dots,n)}
\cong C\l[x, \frac{1}{x}\r]\otimes_C K,
$$
where $C\l[x, \frac{1}{x}\r]$ is a $\De$-ring with the trivial action of the derivations in $\De$.
\end{rmk}

According to \cite{cassdiffgr}, if $H$ is a  Zariski dense $\De$-subgroup of a simple linear algebraic group $G\subset GL_\nu(K)$,
defined over a differentially closed field $K$,
then either $H=G$ or there exists $P\in GL_\nu(K)$ such that $PHP^{-1}$ is a constant $\De$-subgroup of $PGP^{-1}$.
Therefore:

\begin{cor}
If $Gal^\varnothing(\cM_A)$ is simple, we are either in the situation of the proposition above
or there are no $\De$-relations among the solutions of $\sg(Y)=AY$.
\end{cor}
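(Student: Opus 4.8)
The plan is to obtain the statement as a direct consequence of the classification of Cassidy recalled just above, once the two earlier structural results are brought to bear. First I would invoke the density statement (Proposition 6.21 in \cite{HardouinSinger}): the Galois $\De$-group $Gal^\De(\cM_A)$ is a Zariski dense $\De$-subgroup of the classical Picard--Vessiot group $G:=Gal^\varnothing(\cM_A)$. We are working under the running assumption that $K$ is $\De$-closed, and by hypothesis $G$ is a simple linear algebraic group, so the hypotheses of Cassidy's dichotomy are met verbatim with $H=Gal^\De(\cM_A)$ and ambient group $G$.

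Applying \cite{cassdiffgr} then leaves exactly two possibilities, which I would treat separately. Either there is a $P\in GL_\nu(K)$ such that $P\,Gal^\De(\cM_A)\,P^{-1}$ is a constant $\De$-subgroup of $PGP^{-1}$, or $Gal^\De(\cM_A)=G$ as a $\De$-group. In the first case $Gal^\De(\cM_A)$ is, by definition, conjugate over $K$ to a constant $\De$-group, which is precisely assertion (1) of Proposition \ref{prop:simpletrans}; by the equivalence proved there it amounts to the integrability condition (2), that is, to being in the situation of that proposition. This yields the first alternative of the corollary.

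It remains to read the case $Gal^\De(\cM_A)=G$ as the absence of $\De$-relations. Here I would use that $\cR$ is a $Gal^\De(\cM_A)$-torsor, so that the $\De$-relations satisfied by a fundamental solution matrix $Z$ generate exactly the defining $\De$-ideal of $Gal^\De(\cM_A)$ inside $K\{X,\det X^{-1}\}_\De$. When this group is all of $G$, that defining $\De$-ideal is the $\De$-ideal generated by the purely algebraic ideal of $G$; hence every $\De$-relation satisfied by $Z$ is a differential consequence of an algebraic relation, and no genuinely differential relation appears. As a consistency check, Proposition \ref{prop:degtrs} gives that the $\De$-transcendence degree of $\cR$ over $F$ equals the $\De$-dimension of $Gal^\De(\cM_A)$, which in this case is $\dim G$, its maximal possible value, confirming that the solutions are as $\De$-free as the algebraic constraints permit.

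The argument carries no analytic content: the whole weight rests on Cassidy's classification and on Propositions \ref{prop:simpletrans} and \ref{prop:degtrs}. The one point that deserves care, and which I regard as the main (if modest) obstacle, is the bookkeeping in the last paragraph, namely making precise that \emph{equality of $Gal^\De(\cM_A)$ with $G$ as a $\De$-group} means the differential coordinate ring carries no relations beyond the differential prolongations of the algebraic ones, so that the informal phrase \emph{no $\De$-relations among the solutions} is justified. One should also confirm that simplicity of $G$ enters only through Cassidy's hypothesis and that the conjugating matrix $P$ may be taken with entries in $K$, which is ensured because $K$ is $\De$-closed.
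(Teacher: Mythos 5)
Your proof is correct and follows essentially the same route as the paper, which states this corollary as an immediate consequence of the Cassidy dichotomy quoted just before it: density of $Gal^\De(\cM_A)$ in $Gal^\varnothing(\cM_A)$ (Proposition 6.21 of \cite{HardouinSinger}), then the two branches of Cassidy's theorem matched respectively to Proposition \ref{prop:simpletrans} and to the absence of differential relations. Your extra care in the last paragraph---interpreting $Gal^\De(\cM_A)=Gal^\varnothing(\cM_A)$ via the torsor property and Proposition \ref{prop:degtrs} as meaning that every $\De$-relation is a differential consequence of an algebraic one---is exactly the intended reading, which the paper leaves implicit.
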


\subsection{Descent over an algebraically closed field}
\label{part:desc}

\emph{In this subsection, we consider a $(\sg,\De)$-field  $F$, where $\sg$ is an automorphism of $F$ and
$\De=\{\partial\}$ is a set contaning only one derivation.
Moreover we suppose that $\sg$ commutes with $\partial$.}

\medskip
We have recalled above the theory developed in \cite{HardouinSinger}, where the authors assume that
the field of constants $K$ is differentially closed.
Although in most applications of \cite{HardouinSinger} a descent argument \emph{ad hoc}
proves that one can consider smaller, nondifferentially closed, field of constants,
the assumption that $K$ is $\Delta$-closed is quite restrictive.
We show here that if the $\sg$-constants $K$  of
$F$ form an algebraically closed field, we can construct a $\partial$-Picard-Vessiot ring,
whose ring of $\sg$-constants coincides with $K$, which allows us to descend the group introduced in the previous section
from a $\partial$-closed field to an algebraically closed field.
This kind of results were first tackled in \cite{peonnietodiffcomp}
using model theoretic arguments.
Here we use an idea of M. Wibmer of developing a differential analogue of Lemma 2.16 in \cite{Wibmchev}.
In \cite{wibmer2011existence}, M. Wibmer has given a more general version of Proposition \ref{prop:pvdesc}.
A Tannakian approach to the descent of parameterized Galois groups can be found in \cite{GilletGorchinskyOvchinnikov}.

\medskip
For now, we do not make any assumption on $K=F^\sg$.
Let $\Theta$ be the semigroup generated by $\partial$ and, for all $k \in \Z_{\geq 0}$, let $\Theta_{\leq k}$ be the
set of elements of  $\Theta$ of order less or equal to $k$.
We endow the differential rational function ring $\cS:=K\{X, \frac{1}{det(X)}\}_{\partial}$, where $X=(X_{i,j})$,
with the grading associated to the usual ranking
\ie we consider  for all $k \in\Z_{\geq 0}$ the rational function ring $\cS_k:= K\left[\be(X), \frac{1}{det(X)};\forall \be\in\Theta_{\leq k}\right]$.
Of course, we have $\partial(\cS_k) \subset \cS_{k+1}$.
It is convenient to set $\cS_{-1}=K$.

\begin{defn}[see \S3 in \cite{Landodiffker}]
The prolongation
of an ideal $\kI_k$ of $\cS_k$ is the ideal $\pi_1(\kI_k)$ of $\cS_{k+1}$ generated by $\Theta_{\leq 1}(\kI_k)$.
We say that a prime ideal $\kI_k$ of $\cS_k$, for $k\geq 0$, is a differential kernel of length $k$
if the prime ideal $\kI_{k-1}:=\kI_k \cap \cS_{k-1}$ of $\cS_{k-1}$ is such that
$\pi_1(\kI_{k-1} ) \subset \kI_k$.
\end{defn}

Notice that, according to the definition above, any prime ideal $\kI_0$ of $\cS_0$ is a differential kernel.

\begin{rmk}
In \cite{Landodiffker}, the author
defines a differential kernel
as a finitely generated field extension $F(A_0,A_1,...,A_{k}) /F$,
together with an extension of $\partial$
to a derivation of $F(A_0,A_1,...,A_{k-1})$ into $ F(A_0,A_1,...,A_k)$ such that $\partial (A_i) = A_{i+1}$ for $i=0,...,k-1$. We can recover $\kI_{k-1}$
as the kernel of the $F$-morphism
$\cS_k=F[ X,...,\partial^k(X) ] \to F(A_0,A_1,...,A_k)$, with $\partial^i(X)\mapsto A_i$.
We will not use Lando's point of view here.
\par
Notice that we have used Malgrange's prolongation $\pi_1$ (see \cite{MalgrangeInvolutifPS}) to rewrite Lando's definition.
One should pay attention to the fact tha Malgrange also considers a weak prolongation $\wtilde\pi_1$, which we won't consider here
(see Chapter V in \cite{MalgrangeInvolutifPS}).
\end{rmk}

\begin{prop}[Proposition 1 in \cite{Landodiffker}]\label{prop:diffker}
For $k\geq 0$, let $\kI_k$ be a  differential kernel  of $\cS_k$. There exists a differential kernel $\kI_{k+1}$ of $\cS_{k+1}$
such that  $\kI_k = \kI_{k+1} \cap \cS_k$. \end{prop}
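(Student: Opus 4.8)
The plan is to recast the statement as an instance of extending derivations and to invoke the fact that, in characteristic zero, a derivation always prolongs along a finitely generated field extension. Write $L_k$ for the fraction field of the domain $\cS_k/\kI_k$ and $L_{k-1}$ for that of $\cS_{k-1}/\kI_{k-1}$, where $\kI_{k-1}=\kI_k\cap\cS_{k-1}$. Since $\kI_{k-1}$ is the contraction of the prime $\kI_k$, the inclusion $\cS_{k-1}/\kI_{k-1}\hookrightarrow\cS_k/\kI_k$ of domains identifies $L_{k-1}$ with a subfield of $L_k$. I write $a^{(m)}_{i,j}\in L_k$ for the image of $\partial^m(X_{i,j})$, so that $L_{k-1}=K(a^{(m)}_{i,j};\,m\le k-1)$ and $L_k=L_{k-1}\big(a^{(k)}_{i,j}\big)$. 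The target is a prime $\kI_{k+1}\subset\cS_{k+1}$ with $\kI_{k+1}\cap\cS_k=\kI_k$ and, equivalently to the kernel condition at level $k+1$, with $\pi_1(\kI_k)\subset\kI_{k+1}$.

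First I would record the kernel structure as a derivation $D\colon L_{k-1}\to L_k$. Composing the restriction $\partial\colon\cS_{k-1}\to\cS_k$ with the projection $\cS_k\to\cS_k/\kI_k\subset L_k$ gives a derivation $\cS_{k-1}\to L_k$ that extends $\partial$ on $K$ and sends $\partial^m(X_{i,j})\mapsto a^{(m+1)}_{i,j}$. Because $\kI_k$ is a differential kernel we have $\pi_1(\kI_{k-1})\subset\kI_k$, and in particular $\partial(\kI_{k-1})\subset\kI_k$; hence this derivation annihilates $\kI_{k-1}$, factors through $\cS_{k-1}/\kI_{k-1}$, and extends uniquely by the quotient rule to a derivation $D\colon L_{k-1}\to L_k$ with $D\big(a^{(m)}_{i,j}\big)=a^{(m+1)}_{i,j}$ for $m\le k-1$.

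The key step is to push $D$ one level higher. As $L_k=L_{k-1}\big(a^{(k)}_{i,j}\big)$ is a finitely generated extension of $L_{k-1}$ and $\mathrm{char}\,K=0$, it is separably generated: choosing a transcendence basis among the $a^{(k)}_{i,j}$, I may prescribe the values of the extension arbitrarily on those transcendental generators in some field $L_{k+1}\supseteq L_k$ and then extend uniquely across the remaining separably algebraic ones, producing a derivation $D'\colon L_k\to L_{k+1}$ restricting to $D$ on $L_{k-1}$ (one is even free to take $L_{k+1}=L_k$). Set $a^{(k+1)}_{i,j}:=D'\big(a^{(k)}_{i,j}\big)$ and let $\phi\colon\cS_{k+1}\to L_{k+1}$ be the $K$-algebra map sending $\partial^m(X_{i,j})\mapsto a^{(m)}_{i,j}$ for $0\le m\le k+1$; this is well defined because $\phi(\det X)$ is the invertible image of $\det X$ in $L_k$. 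Put $\kI_{k+1}:=\ker\phi$, which is prime since $L_{k+1}$ is a field.

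It remains to check the two conditions. On generators one has $\phi\circ\partial=D'\circ\phi$, and since both $g\mapsto\phi(\partial g)$ and $g\mapsto D'(\phi(g))$ are derivations $\cS_k\to L_{k+1}$ (with $L_{k+1}$ an $\cS_k$-module through $\phi$) agreeing on the generators, they coincide on all of $\cS_k$. Restricting $\phi$ to $\cS_k$ is exactly the embedding $\cS_k/\kI_k\hookrightarrow L_k$, so $\kI_{k+1}\cap\cS_k=\kI_k$; and for $f\in\kI_k$ we get $\phi(\partial f)=D'(\phi(f))=D'(0)=0$, whence $\partial(\kI_k)\subset\kI_{k+1}$ and therefore $\pi_1(\kI_k)\subset\kI_{k+1}$. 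Thus $\kI_{k+1}$ is a differential kernel restricting to $\kI_k$. The only substantial ingredient is the characteristic-zero extension-of-derivations theorem invoked in the third paragraph; the point to watch is that the transcendence basis be taken among the \emph{top} variables $a^{(k)}_{i,j}$, so that the algebraic relations they satisfy over $L_{k-1}$ determine the values $a^{(k+1)}_{i,j}$ compatibly — which is precisely what forces $\pi_1(\kI_k)\subset\kI_{k+1}$. Everything else is formal bookkeeping with the Leibniz rule.
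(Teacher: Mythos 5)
Your proof is correct, and it takes essentially the same route as the paper's: the paper gives no argument of its own for this proposition, deferring entirely to Proposition 1 of \cite{Landodiffker}, and Lando's proof is precisely what you wrote — pass to the field-theoretic picture of kernels (the very translation described in the paper's remark following the definition), then extend the derivation $D\colon L_{k-1}\to L_k$ across the finitely generated extension $L_k/L_{k-1}$ using the characteristic-zero extension theorem for derivations, and pull the resulting prime back as the kernel of the evaluation map $\phi$. The verification that $\kI_{k+1}\cap\cS_k=\kI_k$ and $\pi_1(\kI_k)\subset\kI_{k+1}$ via $\phi\circ\partial=D'\circ\phi$ is exactly the standard bookkeeping, so nothing further is needed.
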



Let $\sg(Y)=AY$ be  a  $\sg$-difference
system  with coefficient in $F$ as \eqref{eq:sys},
$\cR$ a $\partial$-Picard-Vessiot ring for $\sg(Y)=AY$,
constructed as in \S\ref{sec:rapdifgal} under the assumption that $K$ is $\partial$-closed, and
$\kI$ be the defining ideal of $\cR$, \ie the ideal such that
$$
\cR\cong\frac{F\left\{ X, \frac{1}{det(X)} \right\}_\partial}{\kI}.
$$
Then Proposition  6.21 in \cite{HardouinSinger} says that  $\kI_k:= \kI \cap \cS_k$ is a maximal $\sg$-ideal of $\cS_k$
endowed with the $\sg$-structure induced by $\sg(X)=AX$, which implies that
$\kI$ itself is a $\sg$-maximal ideal of $\cS$.
In order to prove the descent of the $\partial$-Picard-Vessiot ring $\cR$, we are going to proceed somehow in the opposite way.
Without any assumption on $K$, we will construct a sequence $(\kI_k)_{k \in \N}$ of $\sg$-maximal
ideals of $\cS_k$ such that $\cup_{k \in \N} \kI_k$ is a $\sg$-maximal ideal of $\cS$ stable by $\partial$. Such an ideal will provide us
with a $\partial$-Picard-Vessiot ring $\cR^\#$, which will be a
simple $\sg$-ring. If, moreover, $K$ is algebraically closed, we will be able to compare its group of automorphisms and the Galois $\partial$-group of the previous section.

\begin{prop}\label{prop:pvdesc}
Let $A\in GL_\nu(F)$. Then there  exists a $(\sg,\partial)$-extension $\cR^\#$ of $F$ such that:
    \begin{enumerate}
    \item $\cR^\#$ is generated over $F$ as a $\partial$-ring by $Z \in GL_\nu(\cR^\#)$ and $\frac{1}{det(Z)}$ for some matrix $Z$ satisfying $\sg(Z)=AZ$;
    \item $\cR^\#$ is a simple $\sg$-ring, \ie it has no nontrivial ideals stable under $\sg$.
    \end{enumerate}
\end{prop}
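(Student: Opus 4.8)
The plan is to build the ring $\cR^\#$ as a quotient of $\cS=K\{X,\det X^{-1}\}_\partial$ by an increasing union of $\sg$-maximal ideals $\kI_k\subset\cS_k$, constructed level by level so that the union is stable under $\partial$. Here I keep in mind that $\cS$ carries the $\sg$-structure induced by $\sg(X)=AX$ as in \eqref{eq:sigmastructure}, and that this $\sg$-action preserves each graded piece $\cS_k$ (since $\sg(\be(X))$ for $\be\in\Theta_{\leq k}$ is expressed through $\partial^{\ul i}(A)$ and $\be'(X)$ with $\be'\in\Theta_{\leq k}$). So $\sg$ acts on each $\cS_k$ and the notion of a $\sg$-maximal ideal of $\cS_k$ makes sense. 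The goal of the induction is to produce, for each $k$, a $\sg$-maximal ideal $\kI_k$ of $\cS_k$ such that $\kI_k\cap\cS_{k-1}=\kI_{k-1}$ \emph{and} such that the prolongation condition $\pi_1(\kI_{k-1})\subset\kI_k$ holds, i.e.\ the $\kI_k$ form a differential kernel compatible with $\sg$.

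First I would set up the base case: take $\kI_0$ to be any $\sg$-maximal ideal of $\cS_0=K[X,\det X^{-1}]$ for the $\sg$-structure $\sg(X)=AX$. This is exactly the classical (i.e.\ $\De=\varnothing$) Picard--Vessiot construction over $K$, so $\kI_0$ exists. For the inductive step, assume $\kI_k$ has been built. The key tension is that I need the next ideal $\kI_{k+1}$ to be simultaneously (a) a differential kernel extending $\kI_k$ in Lando's sense and (b) $\sg$-maximal in $\cS_{k+1}$. For (a) I would invoke Proposition \ref{prop:diffker}, which hands me \emph{some} differential kernel extending $\kI_k$; but that proposition makes no reference to $\sg$, so the ideal it produces need not be $\sg$-stable, let alone $\sg$-maximal. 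The work is therefore to upgrade a bare differential kernel into a $\sg$-equivariant one. Here is where Wibmer's idea enters: I would consider the prolongation $\pi_1(\kI_k)\subset\cS_{k+1}$, which is automatically $\sg$-stable because $\sg$ commutes with $\partial$ and $\kI_k$ is $\sg$-stable, and then choose $\kI_{k+1}$ to be a $\sg$-maximal ideal of $\cS_{k+1}$ \emph{containing} $\pi_1(\kI_k)$. The containment $\pi_1(\kI_k)\subset\kI_{k+1}$ gives the differential-kernel property for free, and $\sg$-maximality is built in by construction.

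The step that needs the most care, and which I expect to be the main obstacle, is showing that this $\sg$-maximal $\kI_{k+1}\supset\pi_1(\kI_k)$ can be chosen so that it contracts back correctly, namely $\kI_{k+1}\cap\cS_k=\kI_k$. The inclusion $\kI_k\subseteq\kI_{k+1}\cap\cS_k$ is clear since $\kI_k\subset\pi_1(\kI_k)$; the reverse inclusion is the delicate point, because a priori enlarging to a $\sg$-maximal ideal upstairs could shrink nothing but could still meet $\cS_k$ in something strictly larger than $\kI_k$. To control this I would use that $\kI_k$ is already $\sg$-maximal in $\cS_k$: the contraction $\kI_{k+1}\cap\cS_k$ is a $\sg$-ideal of $\cS_k$ containing the $\sg$-maximal ideal $\kI_k$, hence equals $\kI_k$ provided it is proper. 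Properness amounts to checking $1\notin\kI_{k+1}$, i.e.\ that there \emph{exists} a proper $\sg$-stable ideal of $\cS_{k+1}$ containing $\pi_1(\kI_k)$ at all; equivalently that $\pi_1(\kI_k)$ does not already contain a $\sg$-translate generating the unit ideal. This is exactly what Lando's Proposition \ref{prop:diffker} secures on the differential side (the differential kernel it produces is prime, hence proper), and combining its proper differential kernel with the $\sg$-stability of $\pi_1(\kI_k)$ lets me locate a proper $\sg$-stable ideal to enlarge.

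Finally I would assemble the pieces: set $\kI^\#=\bigcup_{k\geq 0}\kI_k$. By construction $\kI^\#\cap\cS_k=\kI_k$, the ideal is $\sg$-stable (each $\kI_k$ is), and the prolongation conditions $\pi_1(\kI_{k})\subset\kI_{k+1}$ guarantee $\partial(\kI^\#)\subset\kI^\#$, so $\kI^\#$ is a $(\sg,\partial)$-ideal. Its $\sg$-maximality in $\cS$ follows from the level-wise $\sg$-maximality: any strictly larger $\sg$-ideal would meet some $\cS_k$ in a $\sg$-ideal strictly containing $\kI_k$, contradicting the $\sg$-maximality of $\kI_k$. I then define
$$
\cR^\#=\frac{F\otimes_K\cS}{(\kI^\#)}=\frac{F\{X,\det X^{-1}\}_\partial}{(\kI^\#)},
$$
base-changed to $F$ and equipped with the induced $\sg$- and $\partial$-actions, and verify the two required properties: item (1) holds because the image $Z$ of $X$ is an invertible matrix with $\sg(Z)=AZ$ generating $\cR^\#$ as a $\partial$-ring together with $1/\det(Z)$, and item (2), simplicity as a $\sg$-ring, follows from the $\sg$-maximality of $\kI^\#$ together with the faithful flatness of the extension $K\to F$ (so that no new $\sg$-ideals appear after base change to $F$). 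The only subtlety in this last verification is ensuring the base change from $K$ to $F$ does not create nontrivial $\sg$-stable ideals; this I would handle by the standard argument that $F$ is generated over $K$ by $\sg$-constants-free data, so that a $\sg$-ideal of $\cR^\#$ contracts to a $\sg$-ideal of the $K$-form, which is simple.
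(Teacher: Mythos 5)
Your overall skeleton (levelwise construction of $\sg$-maximal ideals $\kI_k$ with $\pi_1(\kI_k)\subset\kI_{k+1}$ and $\kI_{k+1}\cap\cS_k=\kI_k$, then passing to the union and quotienting) is indeed the strategy of the paper, and your contraction argument and the verification that the union is a $\partial$-stable $\sg$-maximal ideal are fine. But the step you yourself flag as the crux has a genuine gap: you justify the properness of $\pi_1(\kI_k)$ by invoking Proposition \ref{prop:diffker}, whereas that proposition applies only to differential kernels, which are by definition \emph{prime} ideals. A $\sg$-maximal ideal $\kI_k$ is almost never prime: by Corollary 1.16 in \cite{vdPutSingerDifference} it is only a finite intersection of prime ideals permuted by $\sg$. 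So Lando's result cannot be applied to $\kI_k$ itself, and your sentence ``this is exactly what Lando's Proposition secures'' is circular — the hypothesis of that proposition is precisely what is missing. The paper's proof exists to fill exactly this hole, and this is why its induction carries \emph{two} consecutive ideals $\kI_{k-1}$ and $\kI_k$: one writes both as minimal intersections of primes $\kJ_i^{(k-1)}$ and $\kJ_i^{(k)}$, uses minimality (and the $\sg$-permutation of the components) to show that each $\kJ_i^{(k)}$ contracts in $\cS_{k-1}$ onto some component $\kJ_j^{(k-1)}$ of $\kI_{k-1}$ and is therefore itself a prime differential kernel, applies Proposition \ref{prop:diffker} to each component to conclude that each $\pi_1(\kJ_i^{(k)})$, hence $\bigcap_i\pi_1(\kJ_i^{(k)})\supseteq\pi_1(\kI_k)$, is a proper $\sg$-ideal of $\cS_{k+1}$, and only then chooses $\kI_{k+1}$ to be a $\sg$-maximal ideal containing this intersection. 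Without this decomposition argument your induction does not close.

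A second, more basic problem is that you carry out the construction in $\cS=K\{X,\det X^{-1}\}_\partial$ and base-change to $F$ at the end. The $\sg$-structure \eqref{eq:sigmastructure} determined by $\sg(X)=AX$ involves the entries of $A$ and of its derivatives, which lie in $F$, not in $K$; hence $K\{X,\det X^{-1}\}_\partial$ and its pieces carry no such $\sg$-action, and ``a $\sg$-maximal ideal of $\cS_0=K[X,\det X^{-1}]$ for the $\sg$-structure $\sg(X)=AX$'' is not defined. The whole construction must be performed in $F\{X,\det X^{-1}\}_\partial$ from the start, as the paper does in its proof (the notation $\cS=K\{X,\frac{1}{\det(X)}\}_\partial$ in the subsection's preamble notwithstanding). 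Doing so also makes your final base-change step unnecessary; as written, that step is unjustified in any case, since $\sg$-simplicity is not preserved under arbitrary base change and ``faithful flatness'' together with ``$F$ is generated over $K$ by $\sg$-constants-free data'' is not an argument.
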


\begin{rmk}
Of course, a simple $\sg$-ring carrying a structure of $\partial$-ring is a simple $(\sg,\partial)$-ring and thus $\cR^\#$ is a $\partial$-Picard-Vessiot ring in the sense of Definition \ref{defn:PV}.
\end{rmk}

\begin{proof}
Let $\cS=F\left\{X, \frac{1}{det(X)}\right\}_{\partial}$ be the differential rational function ring in the variables $X=(X_{i,j})$ and
let $\cS_k$, $k\in\N$ be as above.
We define a $\sg$-ring structure on $\cS$ as in \eqref{eq:sigmastructure}, so that, in particular, $\sg(X)=AX$.
We will prove by induction on $k \geq 0$ that there exists a maximal
$\sg$-ideal $\kI_{k}$
of $\cS_{k}$ such that $\kI_k$ is a differential kernel of length $k$ and
$\kI_{k-1}=\kI_{k}\cap \cS_{k-1}$.
For $k=0$, we can take $\kI_0$ to be any $\sg$-maximal ideal of $\cS_0$. Then, the $\sg$-ring $\cS_0 / \kI_0$
is a classical Picard-Vessiot ring for $\sg(Y)=AY$ in the sense of \cite{vdPutSingerDifference}.
It follows from Proposition \ref{prop:diffker} that there exists a differential kernel $\kI_1$
of $\cS_1$ such that $\kI_1\cap\cS_0=\kI_0$ and $\pi_1(\kI_0)\subset\kI_1$.
\par
Now, let us construct $\kI_{k+1}$ starting from $\kI_{k-1}$ and $\kI_k$.
By Corollary 1.16 in \cite{vdPutSingerDifference},
both $\kI_{k-1}$ and $\kI_k$ can be written as intersections of the form:
$$
\kI_{k-1} =\bigcap_{i=0}^{t_{k-1}}\kJ_i^{(k-1)}
\l(\hbox{~resp.~}\kI_{k} = \bigcap_{i=0}^{t_{k}}\kJ_i^{(k)}\r),
$$
where the $\kJ_i^{(k-1)}$ (resp. $\kJ_i^{(k)}$) are prime ideals of $\cS_{k-1}$
(resp. $\cS_k$)\footnote{We point out, although the remark plays no role in the proof, that the Ritt-Raudenbush theorem on the $\partial$-Noetherianity of $\cS$
implies that the sequence of integers $(t_k)_{k \in \N}$ becomes stationary.}.
We shall assume that these representations are minimal and, so, unique.
Then,
\begin{enumerate}
\item
    the prime ideals $\kJ_i^{(k-1)}$ (resp. $\kJ_i^{(k)}$)
    are permuted by $\sg$;
\item
    for any $i=1,\dots,t_{k}$, there exists $j\in\{1,\dots,t_{k-1}\}$ such that $\kJ_i^{(k)}\cap \cS_{k-1}= \kJ_j^{(k-1)}$.
\end{enumerate}
The last assertion means that, for all $i=0,...,t_k$, the prime ideal $\kJ_i^{(k)}$ is a differential kernel of $\cS_k$.
Proposition \ref{prop:diffker} implies that
$\pi_1(\kJ_i^{(k)})$, and hence $\cap_{i=0}^{t_{k}}\pi_1(\kJ_i^{(k)})$ is a proper $\sg$-ideal of $\cS_{k+1}$.
Therefore there exists a  $\sg$-maximal ideal
$\kI_{k +1}$ of $\cS_{k+1}$ containing  $\cap_{i=0}^{t_{k}}\pi_1(\kJ_i^{(k)})$.
Moreover, the $\sg$-maximality of $\kI_k$
and the inclusion $\kI_k\subset\kI_{k+1} \cap \cS_k$ imply that $\kI_{k}=\kI_{k+1} \cap \cS_k$, which ends the
recursive argument.
The ideal $\kI =\cup_{k \in \N} \kI_k$ is clearly $\sg$-maximal in $\cS$ and $\partial$-stable. Then,
$\cR^\#:= \cS / \kI$ satisfies the requirements.
\end{proof}

\begin{rmk}
By Lemma 6.8 in \cite{HardouinSinger}, where the assumption that the field $K$ is $\partial$-closed plays no role,
we have that there exists a set of idempotents $e_0,\dots,e_r  \in \cR^\#$ such that $\cR^\# = \cR_0 \oplus \hdots \oplus \cR_r$,
where $\cR_i =e_i \cR^\#$ is an integral domain and
$\sg$ permutes the set $\{\cR_0, \dots,\cR_r \}$. Let $L_i$ be the fraction field of $\cR_i$. The total field of fraction $L^\#$ of $\cR^\#$
is equal to $L_0 \oplus \hdots \oplus L_r$.
\par
If $\wtilde K$ denotes a $\partial$-closure of $K$ and $\wtilde F$ is the fraction field of $\wtilde K \otimes_K F$ equipped
with an extension of $\sg$ that acts as the identity on $\wtilde K$, then
the construction of $\cR^\#$ over $\wtilde F$ gives a ring which is isomorphic to a
$\partial$-Picard-Vessiot extension of  the $\sigma$-difference system  $\sigma(Y)=AY$ viewed as a $\sigma$-difference system
with coefficients in $\wtilde F$, in the sense of \S\ref{subsubsec:DPV}.
\par
Notice that two Picard-Vessiot rings as in the proposition above may require a finitely generated extension of $K$ to become isomorphic (see \cite{wibmer2011existence}).
\end{rmk}

\begin{cor}
If $K =F^\sg$ is an algebraically closed field, the set of  $\sg$-constants  of $\cR^\#$ is equal to $K$.
\end{cor}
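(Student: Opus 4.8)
The goal is to show that when $K=F^\sg$ is algebraically closed, the $\sg$-constants of the ring $\cR^\#$ constructed in Proposition \ref{prop:pvdesc} reduce to $K$. The natural strategy is to exploit the approximation of $\cR^\#$ by the finite levels $\cS_k/\kI_k$, each of which is a \emph{classical} Picard-Vessiot ring for $\sg(Y)=AY$ over $F$ in the sense of \cite{vdPutSingerDifference}. Indeed, at level $k=0$ the ring $\cS_0/\kI_0$ is exactly such a classical Picard-Vessiot ring, and the general theory of difference Picard-Vessiot rings over a field with algebraically closed constants (Corollary 1.17 in \cite{vdPutSingerDifference}) guarantees that its $\sg$-constants are precisely $K$. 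The point is to propagate this statement from level $0$ through the whole union $\cR^\#=\cS/\kI=\bigcup_k \cS_k/\kI_k$.

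First I would set up the following reduction. Let $c\in\cR^\#$ be a $\sg$-constant. Since $\cR^\#=\bigcup_{k}\cS_k/\kI_k$, the element $c$ lives in some finite stage $\cS_k/\kI_k$. The key observation is that each $\cS_k/\kI_k$ is itself a classical $\sg$-Picard-Vessiot ring over $F$: it is generated over $F$ by the entries of the fundamental matrix $X$ together with its iterated derivatives $\partial(X),\dots,\partial^k(X)$ and the inverse of $\det X$, and by the recursive construction $\kI_k$ is a $\sg$-maximal ideal, so $\cS_k/\kI_k$ is a simple $\sg$-ring. The difference equation satisfied by the \emph{enlarged} fundamental matrix $(X,\partial X,\dots,\partial^k X)$ is again a linear $\sg$-system with coefficients in $F$, obtained by differentiating $\sg(X)=AX$ as in \eqref{eq:sigmastructure}: this is again of the form $\sg(\wtilde X)=\wtilde A\,\wtilde X$ for a suitable block-triangular $\wtilde A\in GL_{(k+1)\nu}(F)$. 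Hence $\cS_k/\kI_k$ is a classical Picard-Vessiot ring for the system $\sg(\wtilde Y)=\wtilde A\,\wtilde Y$ over the same $\sg$-field $F$ with the same constants $K$.

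Then I would apply the classical result that a simple $\sg$-ring which is a Picard-Vessiot ring for a linear difference system over $F$, when $K=F^\sg$ is algebraically closed, has no new $\sg$-constants: its ring of $\sg$-invariants equals $K$ (this is the standard difference analogue of the Picard-Vessiot no-new-constants theorem, \cf Corollary 1.17 in \cite{vdPutSingerDifference}). Applying this to the level-$k$ ring containing $c$ yields $c\in K$. Since $k$ was arbitrary and every $\sg$-constant of $\cR^\#$ appears at some finite level, we conclude $(\cR^\#)^\sg=K$.

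The step that requires the most care, and which I expect to be the main obstacle, is verifying that the intermediate ring $\cS_k/\kI_k$ really is a \emph{bona fide} classical Picard-Vessiot ring for the prolonged system, so that the no-new-constants theorem applies verbatim. The construction in Proposition \ref{prop:pvdesc} only asserts that $\kI_k$ is a $\sg$-maximal ideal of $\cS_k$ which happens to be a differential kernel; one must check that $\sg$-maximality of $\kI_k$ translates into simplicity of $\cS_k/\kI_k$ as a $\sg$-ring, that the prolonged matrix $\wtilde A$ is genuinely invertible over $F$ (which follows from $A\in GL_\nu(F)$ and the triangular shape of the Leibniz expansion), and that the derivative variables $\partial^j X$ do not secretly introduce constants outside $K$ — the latter being exactly what the classical theorem rules out once the Picard-Vessiot structure is established. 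Modulo this identification, the argument is a clean induction-free descent along the filtration $\cS_{-1}\subset\cS_0\subset\cS_1\subset\cdots$.
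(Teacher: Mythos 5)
Your proof is correct, and its skeleton is exactly that of the paper: a $\sg$-constant $c\in\cR^\#$ lies in some finite level $\cR_k=\cS_k/\kI_k$, this level is a simple $\sg$-ring (by $\sg$-maximality of $\kI_k$) finitely generated over $F$, and one concludes by a no-new-constants result from \cite{vdPutSingerDifference}. The only divergence is in how that result is invoked, and here the paper is more economical. The step you single out as ``the main obstacle'' --- realizing $\cS_k/\kI_k$ as a bona fide classical Picard--Vessiot ring for the prolonged system satisfied by $(X,\partial X,\dots,\partial^k X)$ arranged in upper block-triangular form --- is valid (the prolonged matrix $\wtilde A$ is block-triangular with diagonal blocks $A$, the prolonged fundamental matrix has determinant $(\det X)^{k+1}$, so invertibility comes for free, and the generated ring is exactly $\cS_k$), but it is unnecessary: the paper cites Lemma 1.8 of \cite{vdPutSingerDifference}, which states that \emph{any} simple $\sg$-ring that is finitely generated as an algebra over a $\sg$-field whose constants $K$ are algebraically closed has ring of constants equal to $K$. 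Simplicity plus finite generation already suffice; no Picard--Vessiot structure for a prolonged system needs to be exhibited, and correspondingly your citation should point to that lemma rather than to a corollary about Picard--Vessiot rings. So your detour is a correct and instructive alternative justification of the key invocation, just longer than what the paper actually needs.
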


\begin{proof}
Let $c \in \cR^\#$ be a $\sg$-constant. In the notation of the previous proof,
there exists $k\in \N$ such that $ c \in \cR_k:=\cS_k/\kI_k$. By construction, $\cR_k$ is a simple $\sg$-ring and a finitely
generated $F$-algebra. By Lemma 1.8 in  \cite{vdPutSingerDifference}, the $\sg$-constants of $\cR_k$ coincide with $K$.
\end{proof}

\begin{prop} \label{prop:descgr}
Let $\sg(Y)=AY $ be a linear $\sg$-difference system with coefficients in $F$ and let $\cR^\#$
be the $\partial$-Picard-Vessiot ring constructed in Proposition \ref{prop:pvdesc}.
\par
If $K=F^\sg$ is algebraically closed, the  functor
\beq\label{eq:groupscheme}
\begin{array}{rccc}
Aut^{\sg,\partial}&:\hbox{$K$-$\partial$-algebras}&\longrightarrow&\hbox{Groups}\\
&S&\longmapsto&Aut^{\sg,\partial}(\cR^\#\otimes_K S / F\otimes_K S)
\end{array}
\eeq
is  representable by a linear algebraic $\partial$-group scheme $G_A$ defined over $K$.
Moreover, $G_A$ becomes isomorphic to $Gal^\partial( \cM_A)$ over a differential closure of $K$
(see Definition \ref{defn:pvgr} above).
\end{prop}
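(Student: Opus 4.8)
The plan is to transpose the torsor construction of the classical difference Galois group (the analogue of Theorem 1.13 in \cite{vdPutSingerDifference} and of its differential refinement in \cite{HardouinSinger}) to the present setting, carrying the derivation $\partial$ along at every step and replacing the $\partial$-closedness of $K$ by its algebraic closedness by means of the finite-level simplicity produced in the proof of Proposition \ref{prop:pvdesc}. First I would form the $(\sg,\partial)$-ring $B:=\cR^\#\otimes_F\cR^\#$ with the diagonal actions of $\sg$ and $\partial$, and set $T:=B^\sg$, a $\partial$-$K$-algebra. Writing $Z_1=Z\otimes 1$ and $Z_2=1\otimes Z$, both are fundamental solution matrices of $\sg(Y)=AY$ in $GL_\nu(B)$, so $N:=Z_1^{-1}Z_2$ satisfies $\sg(N)=N$ and lies in $GL_\nu(T)$; since $\partial$ commutes with $\sg$, the iterated derivatives $\partial^j(N)$ lie in $T$ as well. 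As $B$ is generated over $\cR^\#\otimes 1$ as a $\partial$-algebra by the entries of $Z_2=Z_1N$ and $\det(Z_2)^{-1}$, the entries of $N$, $\det(N)^{-1}$ and their derivatives generate $T$ over $K$, and the multiplication map $\mu:\cR^\#\otimes_K T\to B$ is surjective. The first main goal is to show that $\mu$ is an isomorphism.

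Both the injectivity of $\mu$ and the no-new-constants identity $(\cR^\#\otimes_K S)^\sg=S$ used below I would deduce from the differential analogue of the van der Put--Singer correspondence: for the simple $\sg$-ring $\cR^\#$ with $(\cR^\#)^\sg=K$ and $K=\oK$, and for any $K$-algebra $D$, the assignment $\mathfrak a\mapsto(\cR^\#\otimes_K D)\,\mathfrak a$ is a bijection from the ideals of $D$ onto the $\sg$-ideals of $\cR^\#\otimes_K D$, and $(\cR^\#\otimes_K D)^\sg=D$. The point is that $\cR^\#=\bigcup_k\cR_k$ with $\cR_k=\cS_k/\kI_k$ a \emph{finitely generated} simple $\sg$-ring satisfying $\cR_k^\sg=K$, exactly as in the proof of the Corollary following Proposition \ref{prop:pvdesc}; for each such $\cR_k$ the statement is the classical one (Corollary 1.16 and Lemma 1.8 in \cite{vdPutSingerDifference}), the hypothesis $K=\oK$ being precisely what guarantees that no new $\sg$-constants appear after base change to $D$. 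Passing to the union over $k$ gives the assertion for $\cR^\#$. Applying it with $D=T$, the kernel of $\mu$ is a $\sg$-ideal of the form $\cR^\#\otimes_K\mathfrak a$ with $\mathfrak a\subseteq T\cap\ker\mu=0$, so $\mu$ is injective. I expect this lemma, and in particular handling that $\cR^\#$ is only $\partial$-finitely generated over $F$ by reducing to the finite levels $\cR_k$, to be the main obstacle.

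With the torsor isomorphism $\mu:\cR^\#\otimes_K T\xrightarrow{\sim}B$ in hand, I would equip $T$ with a $\partial$-Hopf algebra structure over $K$: comultiplication, counit and antipode are induced through $\mu$ by the cocycle relations satisfied by $N$ in the triple tensor product $\cR^\#\otimes_F\cR^\#\otimes_F\cR^\#$, namely $N\mapsto N\otimes N$, $N\mapsto I_\nu$ and $N\mapsto N^{-1}$, and these commute with $\partial$ because $N$ does. Since $T$ is $\partial$-finitely generated and generated by the matrix coefficients of $N$ together with $\det(N)^{-1}$, the $\partial$-group scheme $G_A$ with differential Hopf algebra $T$ is a linear algebraic $\partial$-subgroup of $GL_\nu$ defined over $K$.

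Finally, for a $K$-$\partial$-algebra $S$ and $\tau\in Aut^{\sg,\partial}(\cR^\#\otimes_K S/F\otimes_K S)$, the matrix $M_\tau:=Z^{-1}\tau(Z)$ is $\sg$-invariant, hence lies in $GL_\nu(S)$ by the no-new-constants identity, and $N\mapsto M_\tau$ defines a $\partial$-$K$-algebra homomorphism $T\to S$; the inverse construction recovers $\tau$ from such a homomorphism, so this assignment is a group isomorphism $Aut^{\sg,\partial}(\cR^\#\otimes_K S/F\otimes_K S)\cong G_A(S)$, functorial in $S$, which proves the representability. For the comparison, I would base change to a $\partial$-closure $\wtilde K$ of $K$: the Remark following Proposition \ref{prop:pvdesc} identifies the construction of $\cR^\#$ over $\wtilde F$ with a genuine $\partial$-Picard--Vessiot ring for $\sg(Y)=AY$ over $\wtilde F$, so $T\otimes_K\wtilde K$ is its associated differential Hopf algebra and $G_A\times_K\wtilde K\cong Gal^\partial(\cM_A)$.
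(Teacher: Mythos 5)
Your argument for the representability assertion is, in substance, the paper's own: the paper's proof simply defers to p.~368 of \cite{HardouinSinger}, observing that the only input needed there is that the $\sg$-constants do not grow under base change from $K$, and that is precisely the key lemma you establish; your reduction to the finitely generated levels $\cR_k=\cS_k/\kI_k$ is the same device the paper uses in the Corollary following Proposition \ref{prop:pvdesc} (note, moreover, that once $(\cR^\#)^\sg=K$ and $\sg$-simplicity are known, the ideal correspondence for $\cR^\#\otimes_K D$ needs no finite generation at all, so this "main obstacle" is lighter than you expect). Where you genuinely diverge is the second assertion: the paper quotes the differential Tannakian theory of \cite{GilletGorchinskyOvchinnikov}, by which two differential fiber functors become isomorphic over a common $\partial$-closure, whereas you base-change $\cR^\#$ itself and invoke uniqueness of $\partial$-Picard--Vessiot rings. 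Your route works, but not quite for the reason you cite: the Remark after Proposition \ref{prop:pvdesc} concerns re-running the construction over $\wtilde F$, not the base change $\cR^\#\otimes_K\wtilde K$ of the ring already constructed. The missing step is supplied by your own lemma with $D=\wtilde K$: since $\wtilde K$ is a field, $\cR^\#\otimes_K\wtilde K$ is $\sg$-simple, hence its localization at the nonzero elements of the domain $F\otimes_K\wtilde K$ is a simple $(\sg,\partial)$-ring generated as a $\partial$-ring over $\wtilde F$ by $Z$ and $\det(Z)^{-1}$, i.e.\ a $\partial$-Picard--Vessiot ring in the sense of Definition \ref{defn:PV}; Proposition 6.16 of \cite{HardouinSinger} (uniqueness over the $\partial$-closed field $\wtilde K$) then identifies it with the ring defining $Gal^\partial(\cM_A)$, and since formation of $T=(\cR^\#\otimes_F\cR^\#)^\sg$ commutes with $K\to\wtilde K$ (by the constants lemma again), one gets $G_A\otimes_K\wtilde K\cong Gal^\partial(\cM_A)$. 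With that step made explicit, your proof is a more elementary, self-contained, purely Picard--Vessiot-theoretic alternative; the paper's Tannakian citation is shorter and sidesteps these base-change verifications.
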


\begin{rmk}
Without getting into too many details, the representability of the functor \eqref{eq:groupscheme} is precisely
the  definition of a linear algebraic $\partial$-group scheme $G_A$ defined over $K$.
\end{rmk}

\begin{proof}
The first assertion is proved exactly as in \cite{HardouinSinger}, p. 368, where the authors only use the fact that the constants
of the $\partial$-Picard-Vessiot ring do not increase with respect to the base field $F$, the assumption that $K$ is
$\partial$-closed being used to prove this property of $\partial$-Picard-Vessiot rings.
The second assertions is just a consequence of the theory of differential
Tannakian category (see \cite{GilletGorchinskyOvchinnikov}),
which asserts that two differential fiber functors become isomorphic on a common $\partial$-closure of their fields of definition.
\end{proof}

\begin{defn}
We say that $G_A$ is the $\partial$-group scheme attached to $\sg(Y)=A Y$.
\end{defn}

Since we are working with schemes and not with the points of linear algebraic $\partial$-groups in a $\partial$-closure of $K$,
we need to consider functorial definitions of $\partial$-subgroup scheme and invariants (see \cite{MauGal} in the case of  iterative differential equations).
A $\partial$-subgroup functor $H$ of the functor $G_A $ is a $\partial$-group functor
$$
H:\{\mbox{$K$-$\partial$-algebras}\}  \rightarrow  \{\mbox{Groups}\}
$$
such that for all $K$-$\partial$-algebra $S$, the group $H(S)$ is a subgroup of $G_A(S)$.
So, let $L^\#$ be the total ring of fractions of $\cR^\#$ and let $H$ be a $\partial$-subgroup functor of $G_A$.
We say that $r=\frac{a}{b}\in L^\#$, with $a,b\in\cR^\#$, $b$ not a zero divisor, is an invariant of $H$ if for all $K$-$\partial$-algebra $S$
and all $h \in H(S)$, we have
$$
h(a \otimes 1). (b \otimes 1) = (a \otimes 1) . h(b\otimes 1).
$$
We denote the ring of invariant of $L^\#$ under the action of $H$ by $(L^\#)^H$.
\par
The Galois correspondence is proved by classical arguments starting from the following theorem.

\begin{thm}\label{thm:invariants}
Let $H$ be a $\partial$-subgroup functor $H$ of $ G_A$. Then $(L^\#)^H=K$ if and only if $H=G_A$.
\end{thm}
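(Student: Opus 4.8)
The plan is to prove the two implications of the equivalence separately, in the manner of the fundamental fixed-field theorem of (parameterized) Picard--Vessiot theory, now phrased for the functor \eqref{eq:groupscheme} over the algebraically closed field $K$. Two ingredients provided above will drive everything: $\cR^\#$ is simple as a $\sg$-ring (Proposition \ref{prop:pvdesc}), and, because $K$ is algebraically closed, its $\sg$-constants are exactly $K$ (the corollary on $\sg$-constants proved above). The easy half of the statement is the inclusion $K\subseteq (L^\#)^H$, valid for \emph{every} $\partial$-subgroup functor $H$: for each $K$-$\partial$-algebra $S$, an element $h\in H(S)\subseteq G_A(S)$ is by construction an $S$-linear $(\sg,\partial)$-automorphism of $\cR^\#\otimes_K S$, so it fixes the scalars, and every element of $K$ is an invariant. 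The whole content is therefore the opposite inclusion for $H=G_A$ together with the rigidity statement for proper $H$.

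For the implication $H=G_A\Rightarrow (L^\#)^{G_A}=K$, I would exploit the torsor structure that underlies the representability of \eqref{eq:groupscheme}. The simplicity of $\cR^\#$ and the identification of its $\sg$-constants with $K$ are precisely what produce a $(\sg,\partial)$-isomorphism of the form $\cR^\#\otimes_F\cR^\#\cong \cR^\#\otimes_K\cO(G_A)$, where $\cO(G_A)$ is the representing differential Hopf algebra; under it the two canonical embeddings of $\cR^\#$ encode, respectively, the given inclusion and the generic automorphism of $G_A$. An invariant $r=a/b\in L^\#$ is, by the very definition recalled before the statement, fixed by this generic automorphism; transporting the resulting identity through the torsor isomorphism, and using once more the simplicity and the constants, forces $r$ to lie in the field of definition, which yields $(L^\#)^{G_A}=K$. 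This is the differential and functorial avatar of Lemma~6.19 of \cite{HardouinSinger}.

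For the converse $(L^\#)^H=K\Rightarrow H=G_A$, I would argue by contraposition. Suppose $H$ is a proper $\partial$-subgroup functor of $G_A$, so that its defining $\partial$-ideal in $\cO(G_A)$ is nonzero. By the differential analogue of Chevalley's theorem (in the spirit of \cite{cassdiffgr}) I would realize $H$ as the stabilizer of a line in a finite-dimensional differential representation of $G_A$; the associated semi-invariant is then a function on $G_A$ fixed by $H$ but genuinely moved by $G_A$. Pulling this semi-invariant back through the torsor isomorphism of the previous paragraph produces an element of $L^\#$ that is $H$-invariant but, by that paragraph, does not belong to $K$. This contradicts $(L^\#)^H=K$ and forces $H=G_A$.

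The step I expect to be the main obstacle is exactly this converse: performing the differential Chevalley and quotient constructions \emph{functorially over the possibly non-$\partial$-closed field $K$}. Over a $\partial$-closure $\wtilde K$ one has $G_A\cong Gal^\partial(\cM_A)$ (Proposition \ref{prop:descgr}) and could argue on Kolchin-dense sets of points; here everything must be carried out scheme-theoretically, so I would need the theory of quotients of linear algebraic $\partial$-group schemes to guarantee that the separating semi-invariant already exists over $K$, and then to verify that its pullback is an honest element $a/b\in L^\#$ with $b$ a non-zero-divisor and $a/b\notin K$. Once this is settled, the announced Galois correspondence follows from the theorem by the classical formal arguments alluded to just before its statement.
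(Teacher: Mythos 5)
Your proposal follows essentially the same route as the paper: the paper's proof of Theorem \ref{thm:invariants} is exactly an appeal to the arguments of Theorem 11.4 in \cite{MauGal} and Lemma 6.19 in \cite{HardouinSinger}, i.e., the torsor isomorphism $\cR^\#\otimes_F\cR^\#\cong\cR^\#\otimes_K\cO(G_A)$ (powered by simplicity of $\cR^\#$ and $(\cR^\#)^\sg=K$) for one implication, and a functorial Chevalley stabilizer-of-a-line semi-invariant argument, carried out scheme-theoretically over the non-$\partial$-closed field $K$, for the converse --- precisely the two steps you sketch, with the obstacle you flag being exactly what the citation of \cite{MauGal} is meant to resolve. (One caveat your sketch inherits from the statement: since the automorphisms fix $F\otimes_K S$, the fixed ring of the full group contains $F$, so the ``$(L^\#)^H=K$'' in the theorem should read ``$(L^\#)^H=F$'', as in Lemma 6.19 of \cite{HardouinSinger}.)
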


\begin{proof}
The proof relies on the same arguments as Theorem 11.4 in \cite{MauGal} and Lemma 6.19 in \cite{HardouinSinger}.
\end{proof}

\subsubsection{Descent of the criteria for hypertranscendency}
\label{subsubsec:descent}

The main consequence of Proposition \ref{prop:descgr} and Theorem \ref{thm:invariants} is that
Propositions \ref{prop:degtrs} and  \ref{prop:gatrans}  remain valid if one only assumes that the $\sg$-constants $K$ of the
base field
$F$ form an algebraically closed field and replaces the differential Picard-Vessiot group $Gal^\De(\cM_A)$ defined over
the differential closure of $K$ by the $\De$-group scheme $G_A$ defined over $K$.
In \cite{OvchinnikovGorchinskiy}, the authors exhibit a linear differential algebraic
group defined over $K$  which can be  conjugated  to a constant $\partial$-group   only over a transcendental extension of
$K$. Thus, one has to be careful when trying to prove a descended version of Proposition  \ref{prop:simpletrans}.
We go back to the multiple derivation case to state the following proposition, which is a generalization of
Corollary 3.12 in \cite{HardouinSinger}:

\begin{prop}\label{prop:intergrabilitydescent}
We suppose that $F$ is a $(\sg,\De)$-field\footnote{We recall that we assume that the derivation of $\De$ and $\sg$ commute
with each other.}, with $\De=\{\partial_1,\dots,\partial_n\}$,
such that $K=F^\sg$ is an algebraically closed field
and $F=K(x_1,\dots, x_n)$ a purely transcendental extension of transcendence basis
$x_1,\dots, x_n$. We suppose also that $\sg$ induces an automorphisms of $K(x_i)$ for any $i=1,\dots,n$.
\par
Let  $\wtilde K$  be  a $\De$-closure of $K$ and $\wtilde F$ be the fraction field of $\wtilde K \otimes_K F$
equipped with en extension of $\sg$ acting as the identity on $\wtilde K$,
so that one can consider $Gal^\De(\cM_A)$ as in \S\ref{subsubsec:difPVgr}.
\par
We consider a $\sigma$-difference system $\sigma(Y) =A Y$  with coefficients in $F$. The following statements are equivalent:
\begin{enumerate}
\item $Gal^\De(\cM_A)$ is conjugate over $\wtilde K$ to a constant $\De$-group.

\item There exist $B_1,\dots,B_n\in M_\nu (F)$ such that the system
$$
\left\{\begin{array}{l} \sg(Y)=AY \\
\partial_i Y=B_iY,~i=1,\dots,n\\
\end{array} \right.
$$
is integrable.
\end{enumerate}
\end{prop}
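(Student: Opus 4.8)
The statement is an equivalence between a group-theoretic condition over $\wtilde K$ and an integrability condition over $F$. I already have the analogous equivalence over the differentially closed field $\wtilde K$ in Proposition \ref{prop:simpletrans}, which says that $Gal^\De(\cM_A)$ is conjugate over $\wtilde K$ to a constant $\De$-group if and only if the system $\sg(Y)=AY$ can be completed by compatible systems $\partial_i Y = \wtilde B_i Y$ with $\wtilde B_i \in M_\nu(\wtilde F)$. So the real content here is a \emph{descent}: I must show that if integrating matrices exist over $\wtilde F$, then they can be chosen already over $F$. The plan is therefore to establish the implication $(1)\Rightarrow(2)$ by descending the $\wtilde B_i$ from $\wtilde F$ to $F$, while $(2)\Rightarrow(1)$ follows immediately from Proposition \ref{prop:simpletrans} applied over $\wtilde K$, since matrices over $F\subset\wtilde F$ satisfying the integrability relations give integrability over $\wtilde F$, forcing $Gal^\De(\cM_A)$ to be $\wtilde K$-conjugate to a $\De$-constant group.

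\textbf{The descent step.} Assume $(1)$. By Proposition \ref{prop:simpletrans} (over $\wtilde K$), there exist $\wtilde B_1,\dots,\wtilde B_n\in M_\nu(\wtilde F)$ satisfying
\[
\partial_i(\wtilde B_j) + \wtilde B_j\wtilde B_i = \partial_j(\wtilde B_i) + \wtilde B_i\wtilde B_j
\quad\hbox{and}\quad
\sg(\wtilde B_j)A = \partial_j(A) + A\wtilde B_j .
\]
I want to produce matrices over $F$ satisfying the same relations. The key observation is that the second family of equations, $\sg(\wtilde B_j)A=\partial_j(A)+A\wtilde B_j$, is an \emph{inhomogeneous linear $\sg$-system} in the unknown entries of $B_j$, with coefficients in $F$; its solutions in $M_\nu(\wtilde F)$ form a torsor under the $\wtilde K$-vector space of solutions of the associated homogeneous system $\sg(B)A = AB$. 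The homogeneous solution space is defined over $K$ (its equations have coefficients in $F$ and the constants are $\sg$-invariant), so its dimension does not change under the base extension $F\to\wtilde F$; equivalently, since $\wtilde F$ is a fraction field of $\wtilde K\otimes_K F$ with $\sg$ acting as the identity on $\wtilde K$, one has $\wtilde F^{\,\sg}=\wtilde K$ and $F^{\,\sg}=K$, so the $\sg$-linear-algebra descends. Concretely I would fix a solution $B_j^{(0)}\in M_\nu(F)$ of the inhomogeneous system over $F$ if one exists, or argue that the $\wtilde K$-affine subspace of solutions is defined over $K$ and hence meets $M_\nu(F)$ because $K$ is algebraically closed (so the $K$-points of a nonempty $K$-scheme, here an affine space, are nonempty). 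This yields candidate matrices $B_j\in M_\nu(F)$ satisfying the second relation; the first (integrability among the $\partial_i$) then must be checked to survive the descent.

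\textbf{The main obstacle.} The delicate point is that descending each $B_j$ individually using the linear $\sg$-equation is not enough: I must descend the $B_j$ \emph{simultaneously} so that they also satisfy the mutual integrability conditions $\partial_i(B_j)+B_jB_i=\partial_j(B_i)+B_iB_j$, which are quadratic. The clean way is to package both families of equations as the condition that a single block system is integrable, i.e. to work directly with the $\De$-group scheme $G_A$ over $K$ furnished by Proposition \ref{prop:descgr} rather than with $Gal^\De(\cM_A)$ over $\wtilde K$. Indeed, by Proposition \ref{prop:descgr} the group $G_A$ over $K$ becomes isomorphic to $Gal^\De(\cM_A)$ over $\wtilde K$, so condition $(1)$ is equivalent to $G_A$ being conjugate \emph{over $K$} to a $\De$-constant group (here the hypothesis that $K$ is algebraically closed, together with the remark that constancy can be read off the defining $\De$-ideal via Remark \ref{rmk:Deconstant}, is what rules out the pathology of \cite{OvchinnikovGorchinskiy} flagged just before the statement). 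Granting this, I would rerun the proof of Proposition \ref{prop:simpletrans}, but with $G_A$ and the $\partial$-Picard-Vessiot ring $\cR^\#$ of Proposition \ref{prop:pvdesc} in place of their $\wtilde K$-counterparts: the $\De$-constancy of $G_A$ means the logarithmic derivatives $\partial_i Z\cdot Z^{-1}$ are fixed by $G_A$, hence by Theorem \ref{thm:invariants} lie in $M_\nu(F)$, and these are exactly the desired $B_i$. The integrability relations then hold automatically because they are the compatibility conditions forced by $\partial_i\partial_j=\partial_j\partial_i$ and $\sg\partial_i=\partial_i\sg$ acting on $Z$. Thus the genuine work is concentrated in upgrading the conjugacy from "over $\wtilde K$" to "over $K$" and in verifying that the invariants computed by $G_A$-theory land in $F$ rather than $\wtilde F$; the rest is a direct transcription of the argument of Proposition 3.12 in \cite{HardouinSinger}.
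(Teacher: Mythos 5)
Your reduction of the problem is right as far as it goes: $(2)\Rightarrow(1)$ is immediate from Proposition \ref{prop:simpletrans}, and the content is the descent of the $\wtilde B_i$ from $\wtilde F$ to $F$. But your proposed descent mechanism has a genuine gap, in fact two. First, the machinery you invoke --- the descended group scheme $G_A$ of Proposition \ref{prop:descgr}, the ring $\cR^\#$ of Proposition \ref{prop:pvdesc}, and Theorem \ref{thm:invariants} --- is developed in the paper only for $\De=\{\partial\}$ a single derivation (the proof of Proposition \ref{prop:pvdesc} rests on Lando's one-derivation theory of differential kernels); the present proposition is precisely the multiple-derivation case, where no such $G_A$ is available. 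Second, and more fundamentally, your pivotal step is the upgrade from ``$Gal^\De(\cM_A)$ is conjugate over $\wtilde K$ to a constant group'' to ``conjugate \emph{over $K$} to a constant group.'' This is exactly the failure mode exhibited by the example of \cite{OvchinnikovGorchinskiy} that the paper flags immediately before the statement: a linear differential algebraic group defined over $K$ that becomes conjugate to a constant group only over a transcendental extension of $K$. Algebraic closedness of $K$ does not rule this out, and you give no argument that it does; tellingly, the paper's own one-derivation Corollary \ref{cor:intergrabilitydescent} still states the conjugacy over $\wtilde K$, not over $K$. Without that upgrade, your invariant-theoretic argument only puts the logarithmic derivatives $\partial_i Z\cdot Z^{-1}$ (after twisting by a conjugating matrix with entries in $\wtilde K$) in $M_\nu(\wtilde F)$, which is where you started.

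The paper's proof sidesteps groups entirely and descends the matrices by an elementary specialization argument, using the special shape of $F$. One first checks that $x_1,\dots,x_n$ remain algebraically independent over $\wtilde K$ (a short argument playing the $\sg$-action $x_i\mapsto q_ix_i$ or $x_i+h_i$ against a minimal polynomial relation), so that $\wtilde F=\wtilde K(x_1,\dots,x_n)$. Then, taking the $\wtilde B_i\in M_\nu(\wtilde F)$ given by Proposition \ref{prop:simpletrans}, one writes their entries as rational functions in $\ul x$ and replaces each $\wtilde K$-coefficient by an indeterminate. Clearing denominators in the integrability equations and identifying coefficients of monomials in $\ul x$ turns these equations into a finite system of polynomial equations with coefficients in $K$, in those finitely many indeterminates. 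This system has a solution in $\wtilde K$ (namely the actual coefficients of the $\wtilde B_i$), hence, $K$ being algebraically closed, a solution in $K$; the resulting matrices lie in $M_\nu(F)$ and satisfy all the required relations --- the quadratic cross-conditions included, since the whole coupled system is specialized at once. This handles exactly the ``simultaneous descent'' obstacle you correctly identified, but without ever needing a descended group scheme or a conjugation defined over $K$.
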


\begin{rmk}
Since $\sg$ induces an automorphism of $K(x_i)$ for any $i=1,\dots,n$, it acts on $x_i$ through a Moebius transformation.
By choosing another transcendence basis of $F/K$, we can suppose that
either $\sg(x_i)=q_ix$ for some $q_i\in K$ or $\sg(x_i)=x_i+h_i$ for some $h_i\in K$.
We are therefore in the most classical situation.
\end{rmk}

\begin{proof}
First of all we prove that the field $\wtilde F$ is a purely algebraic extension of $\wtilde K$.
Suppose that $\sg$ acts periodically on $x_1$, so that $r$ is the minimal positive integer such that $\sg^r(x_1)=x_1$. Then the polynomial
$$
(T-x_1)(T-\sg(x_1))\cdots(T-\sg^{r-1}(x_1))
$$
has coefficients in $K$ and vanishes at $x_1$. This is impossible since $x_1$ is transcendental over $K$.
Therefore we deduce that $\sg$ does not acts periodically on $x_1$, or on any
element of the transcendence basis.
\par
Suppose now that $x_1, \dots, x_n$, considered as elements of $\wtilde F$, satisfy
an algebraic relation over $\wtilde K$. This means that there exists a nonzero polynomial $P\in \wtilde K[T_1, \dots, T_n]$
such that $P(x_1,\dots,x_n)=0$. We can suppose that there exists $i_0=1,\dots,n$ such that $\sg(x_i)=q_ix$ for $i\leq i_0$
and $\sg(x_i)=x_i+h_i$ for $i>i_0$ (see remark above) and
choose $P$ so that the number of
monomials appearing in its expression is minimal. Let $T_1^{\a_1}\cdots T_n^{\a_n}$ be a monomial of maximal degree
appearing in $P$. We can of course suppose that its coefficient is equal to $1$.
Then $P(q_1T_1,\dots,q_{i_0}T_{i_0},T_{i_0+1}+h_{i_0},\dots,T_n+h_n)$ is another polynomial which annihilates at $x_1,\dots,x_n$.
It contains a monomial of higher degree of the form
$q_1^{\a_1}\cdots q_{i_0}^{\a_{i_0}}T_1^{\a_1}\cdots T_n^{\a_n}$. Therefore
$$
P(T_1, \dots, T_n)-q_1^{-\a_1}\cdots q_{i_0}^{-\a_{i_0}}P(q_1T_1,\dots,q_{i_0}T_{i_0},T_{i_0+1}+h_{i_0},\dots,T_n+h_n)
$$
also vanishes at $x_1,\dots,x_n$ and contains less terms than $P$, against our assumptions on $P$.
This proves that $x_1,\dots,x_n$ are algebraically independent over $\wtilde K$ and hence that $\wtilde F=\wtilde K(x_1,\dots,x_n)$.
\par
Now let us prove the proposition.
The group $Gal^\De(\cM_A)$ is the $\De$-Galois group of $\sigma(Y) =A Y$  considered as a $\sigma$-difference system with
coefficients in $\wtilde F$.
Then by Proposition  \ref{prop:simpletrans}, there exists $\wtilde B_i\in M_\nu (\wtilde F)$ such
that
\beq
\label{eq:compint}
\l\{\begin{array}{l}
\sigma(\wtilde B_i)=A\wtilde B_iA^{-1} + \partial_i(A)A^{-1}\\
\partial_i(\wtilde B_j)+\wtilde B_j\wtilde B_i=
\partial_j(\wtilde B_i)+\wtilde B_i\wtilde B_j
\end{array}\r..
\eeq
Now, we can replace the coefficients in $\wtilde B_i$ before the monomials in $\underline{x}$ by indeterminates.
From \eqref{eq:compint},
we obtain the system
$$
\l\{\begin{array}{l}
\sigma(\overline B_i)=A\overline B_iA^{-1} + \partial_i(A)A^{-1}\\
\partial_i(\overline B_j)+\overline B_j\overline B_i=
\partial_j(\overline B_i)+\overline B_i\overline B_j
\end{array}\r..
$$
with indeterminate coefficients, which
possess a specialization in $\wtilde K$. By clearing the denominators and identifying the coefficients of the monomials in the transcendence basis
$\underline{x}$, we see that this system is equivalent to a finite set of polynomial equations with coefficient in $K$. Since
$K$ is algebraically closed and this set of equations has a solution in $\wtilde K$, it must have a solution in $K$.
Thus, there exists $B_i\in M_\nu (F)$ with the required properties.
\par
On the other hand, if there exists $B_i\in M_\nu (F)$ with the required commutativity properties, then
it follows from Proposition
\ref{prop:simpletrans} that $Gal^\De(\cM_A)$ is  conjugate over $\wtilde K$ to
a constant $\De$-group.
\end{proof}

Going back to the one derivative situation, we have:

\begin{cor}\label{cor:intergrabilitydescent}
We suppose that $F/K$ a purely transcendental extension as in the previous proposition.
Let $\sigma(Y) =A Y$ be a $\sigma$-difference system with coefficients in $F$ and let $G_A$ be its $\partial$-group scheme over
$K$.
The following statements are equivalent:
\begin{enumerate}
\item $G_A$ is conjugate over $\wtilde K$ to a constant $\partial$-group.

\item There exists $B \in M_\nu (F)$ such that $\sigma(B)=ABA^{-1} + \partial (A)A^{-1} $,
\ie such that the system
$$
\left\{\begin{array}{l} \sg(Y)=AY \\
\partial Y=BY \\
\end{array} \right.
$$
is integrable.
\end{enumerate}
\end{cor}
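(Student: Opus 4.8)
The plan is to read this off from Proposition \ref{prop:intergrabilitydescent} in the single-derivation case $\De=\{\partial\}$, and then to translate the condition on the Galois group across the descent isomorphism of Proposition \ref{prop:descgr}. Taking $n=1$, the hypotheses of the corollary are exactly those of Proposition \ref{prop:intergrabilitydescent}, and condition (2) of the corollary (existence of $B\in M_\nu(F)$ with $\sg(B)=ABA^{-1}+\partial(A)A^{-1}$, i.e.~integrability of the enlarged system) is literally condition (2) of that proposition. Hence, by Proposition \ref{prop:intergrabilitydescent}, condition (2) is equivalent to the assertion that $Gal^\partial(\cM_A)$ is conjugate over $\wtilde K$ to a constant $\partial$-group. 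It then remains only to show that this is equivalent to condition (1), i.e.~that $G_A$ is conjugate over $\wtilde K$ to a constant $\partial$-group.

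To bridge the two I would invoke Proposition \ref{prop:descgr}, which provides an isomorphism $G_A\otimes_K\wtilde K\cong Gal^\partial(\cM_A)$ of linear algebraic $\partial$-group schemes over $\wtilde K$. Both conditions are statements about the base change to $\wtilde K$ (for $G_A$, defined over $K$, one first extends scalars to $\wtilde K$, whereas $Gal^\partial(\cM_A)$ already lives over $\wtilde K$), so the entire question reduces to whether the property ``conjugate over $\wtilde K$ to a constant $\partial$-group'' is preserved by the isomorphism of Proposition \ref{prop:descgr}. I would secure this preservation by observing that the isomorphism is \emph{inner}, i.e.~realized by conjugation by a matrix $P\in GL_\nu(\wtilde K)$: it arises from comparing two fundamental solution matrices of $\sg(Y)=AY$ inside isomorphic $\partial$-Picard-Vessiot rings (the isomorphism $\cR^\#\otimes_K\wtilde K\cong$ the $\partial$-Picard-Vessiot extension over $\wtilde F$ recorded in the remark after Proposition \ref{prop:pvdesc}), and two such matrices differ by a factor in $GL_\nu$ of the $\sg$-constants $\wtilde K$. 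Conjugating the identity $G_A\otimes_K\wtilde K=P^{-1}\,Gal^\partial(\cM_A)\,P$ by the matrix carrying $Gal^\partial(\cM_A)$ onto a constant group then carries $G_A\otimes_K\wtilde K$ onto the same constant group, and symmetrically; hence the two conditions hold simultaneously. Alternatively, one may argue intrinsically: by the second characterization in Remark \ref{rmk:Deconstant}, over the $\De$-closed field $\wtilde K$ being conjugate to a constant $\partial$-group amounts to the $\partial$-Hopf algebra being an extension of scalars of a finitely generated Hopf algebra over the constants, a property invariant under $\wtilde K$-isomorphism, so it transfers directly between $G_A\otimes_K\wtilde K$ and $Gal^\partial(\cM_A)$.

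The main obstacle is exactly this transfer step: ``conjugate to a constant subgroup of $GL_\nu$'' is a priori a property of the \emph{embedded} group rather than of the abstract $\partial$-group scheme, so one must check that the descent of Proposition \ref{prop:descgr} is compatible with the embedding into $GL_\nu$ (which is what the inner nature of the isomorphism, or the intrinsic Hopf-algebra characterization, guarantees). Once this is granted, chaining the equivalences, condition (2) $\Longleftrightarrow$ ``$Gal^\partial(\cM_A)$ is conjugate over $\wtilde K$ to a constant $\partial$-group'' $\Longleftrightarrow$ condition (1), completes the proof; no computation beyond the $n=1$ specialization is required.
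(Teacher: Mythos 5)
Your proof is correct and follows essentially the same route as the paper's: the paper likewise specializes Proposition \ref{prop:intergrabilitydescent} to the one-derivation case and transfers the ``conjugate over $\wtilde K$ to a constant $\partial$-group'' condition between $G_A$ and $Gal^\partial(\cM_A)$ through the isomorphism of Proposition \ref{prop:descgr} (using Proposition \ref{prop:simpletrans} for the converse direction, which is immaterial). Your additional justification of the transfer step --- that the comparison isomorphism is inner, i.e.\ realized by conjugation by a matrix in $GL_\nu(\wtilde K)$ coming from the ambiguity in the choice of fundamental solution matrix --- is a point the paper passes over in silence, and it is the right way to secure it.
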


\begin{proof}
Let us assume that $G_A$ is  conjugate over $\wtilde K$ to a constant $\partial$-group. Since $G_A$ becomes isomorphic to
$Gal^\partial(\cM_A)$ over $\wtilde K$, the latter is conjugate over $\wtilde K$ to
a constant $\partial$-group and we can conclude by the proposition above.
On the other hand, if there exists $B \in M_\nu (F)$ such that $\sigma(B)=ABA^{-1} + \partial (A)A^{-1}$, then
it follows from Proposition
\ref{prop:simpletrans} that $Gal^\partial(\cM_A)$ is  conjugate over $\wtilde K$ to
a constant $\partial$-group. Therefore the same holds for $G_A$.
So we can apply the previous proposition.
\end{proof}

\section{Confluence and $q$-dependency}

\subsection{Differential Galois theory for $q$-dependency}

Let $k$ be a characteristic zero field, $k(q)$ the field of rational functions in $q$ with coefficients in $k$ and $K$
a finite extension of $k(q)$.
We fix an extension $|~|$ to $K$ of the $q^{-1}$-adic valuation on $k(q)$.
This means that $|~|$ is defined on
$k[q]$ in the following way: there exists $d\in\R$, $d>1$, such that $|f(q)|=d^{\deg_q(f)}$ for any $f\in k[q]$. It extends by
multiplicativity to $k(q)$.
By definition,
we have $|q|>1$ and therefore it makes sense to consider elliptic functions with respect to $|~|$.
So let $(C, |~|)$ be the smallest valuated extension of $(K,|~|)$ which is both
complete and algebraically closed,
$\cM er(C^*)$ the field of meromorphic functions over $C^*:=C\smallsetminus\{0\}$ with respect to $|~|$, \ie the
field of fractions of the analytic functions over $C^*$, and
$C_E$ the field of elliptic functions on the torus $C^*/q^\Z$,
\ie the subfield of $\cM er(C^*)$ invariant with respect to the $q$-difference operators $\sgq:f(x)\mapsto f(qx)$.
\par
Since the derivation $\de_q=q\frac{d}{dq}$ is continuous on $k(q)$ with respect to $|~|$,
it naturally acts of the completion of $K$ with respect to $|~|$, and therefore on the completion
of its algebraic closure, which coincides with $C$ (see Chapter 3 in \cite{robert}).
It extends to
$\cM er(C^*)$ by setting $\de_q x=0$.
The fact that $\de_x=x\frac{d}{dx}$ acts on $\cM er(C^*)$ is straightforward.
We notice that
$$
\l\{\begin{array}{l}
\de_x\circ\sgq=\sgq\circ\de_x;\\
\de_q\circ\sgq=\sgq\circ(\de_x+\de_q).
\end{array}\r.
$$
This choice of the derivations is not optimal, in the sense that we would like to have two derivations commuting each other and,
more important, commuting to $\sgq$.
We are going to reduce to this assumption in two steps.
First of all, we consider the logarithmic derivative $\ell_q(x)=\frac{\de_x(\theta_q)}{\theta_q}$ of the Jacobi Theta function:
$$
\theta_q(x)=\sum_{n\in\Z}q^{-n(n-1)/2}x^n.
$$
We recall that, if $|q|>1$, the formal series $\theta_q$ naturally defines a meromorphic function on $C^*$ and satisfies the the $q$-difference equation
$$
\theta_q(qx)=qx\theta_q(x),
$$
so that $\ell_q(qx)=\ell_q(x)+1$.
This implies that $\sgq\de_x\l(\ell_q\r)=\de_x\l(\ell_q\r)$ and hence that $\de_x(\ell_q)$ is an elliptic
function.

\begin{lemma}\label{lemma:derivation1}
The following derivations of $\cM er(C^*)$
$$
\l\{
\begin{array}{l}
\delta_x,\\
\de=\ell_q(x)\delta_x+\delta_q.
\end{array}\r.
$$
commute with $\sgq$.
\end{lemma}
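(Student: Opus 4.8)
The first of the two derivations requires no argument at all: the identity $\de_x\circ\sgq=\sgq\circ\de_x$ is exactly one of the two commutation relations recorded just before the statement. So the entire content of the lemma lies in the second derivation $\de=\ell_q\de_x+\de_q$. The plan is to verify $\de\circ\sgq=\sgq\circ\de$ by evaluating both operators on an arbitrary $f\in\cM er(C^*)$ and reducing everything to three ingredients that are already available: the two commutation relations $\de_x\circ\sgq=\sgq\circ\de_x$ and $\de_q\circ\sgq=\sgq\circ(\de_x+\de_q)$; the functional equation $\sgq(\ell_q)=\ell_q+1$, which follows from $\ell_q(qx)=\ell_q(x)+1$; and the fact that $\sgq$ is a ring homomorphism.

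Concretely, I would first expand the left-hand side. Applying $\de=\ell_q\de_x+\de_q$ to $\sgq(f)$ and substituting the two commutation relations turns $\de_x\sgq(f)$ into $\sgq(\de_x f)$ and $\de_q\sgq(f)$ into $\sgq(\de_x f)+\sgq(\de_q f)$; collecting the two resulting copies of $\sgq(\de_x f)$ gives
\[
\de(\sgq f)=(\ell_q+1)\,\sgq(\de_x f)+\sgq(\de_q f).
\]
For the right-hand side I would instead use that $\sgq$ is multiplicative, so that $\sgq(\ell_q\,\de_x f)=\sgq(\ell_q)\,\sgq(\de_x f)$, and then insert $\sgq(\ell_q)=\ell_q+1$ to obtain the same expression
\[
\sgq(\de f)=(\ell_q+1)\,\sgq(\de_x f)+\sgq(\de_q f).
\]
Comparing the two displays yields $\de\circ\sgq=\sgq\circ\de$, which is the assertion.

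There is essentially no serious obstacle; the only point worth watching is the interaction between the twisted relation for $\de_q$ and the fact that $\sgq$ does \emph{not} fix $\ell_q$. The anomalous $\de_x$-term produced by $\de_q\circ\sgq=\sgq\circ(\de_x+\de_q)$ is exactly compensated by the shift $\sgq(\ell_q)=\ell_q+1$, so that both sides acquire the common coefficient $\ell_q+1$ in front of $\sgq(\de_x f)$. It is precisely this cancellation that singles out $\ell_q\de_x+\de_q$ as the right combination, and it explains a posteriori why $\ell_q$ must appear as the coefficient of $\de_x$ in the definition of $\de$.
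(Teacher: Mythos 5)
Your proof is correct and takes essentially the same approach as the paper: both verify $\de\circ\sgq=\sgq\circ\de$ by direct computation from the relations $\de_x\circ\sgq=\sgq\circ\de_x$, $\de_q\circ\sgq=\sgq\circ(\de_x+\de_q)$ and the functional equation $\ell_q(qx)=\ell_q(x)+1$. The only cosmetic difference is that the paper rewrites the left-hand side into the right-hand side in a single chain (where the coefficient appears as $\ell_q(q^{-1}x)+1=\ell_q(x)$), whereas you reduce both sides to the common expression $(\ell_q+1)\,\sgq(\de_x f)+\sgq(\de_q f)$.
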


\begin{proof}
For $\de_x$, it is clear. For $\de$, we have:
$$
\begin{array}{rcl}
\de\circ\sgq(f(q,x))
&=& \l[\ell_q(x)\de_x+\de_q\r]\circ\sgq(f(q,x))\\ \\
&=&\sgq\circ\l[\l(\ell_q(q^{-1}x)+1\r)\de_x+\de_q\r]f(q,x)\\ \\
&=&\sgq\circ\l[\ell_q(x)\de_x+\de_q\r]f(q,x)\\ \\
&=&\sgq\circ\de(f(q,x)).
\end{array}
$$
\end{proof}

\begin{cor}~
\begin{enumerate}
\item
The derivations $\de_x$, $\de$ of $\cM er(C^*)$ stabilize $C_E$ in $\cM er(C^*)$.
\item
The field of constants $\cM er(C^*)^{\de_x,\de}$ of $\cM er(C^*)$ with respect to
$\de_x$, $\de$ is equal to the algebraic closure $\ol k$ of $k$ in $C$.
\end{enumerate}
\end{cor}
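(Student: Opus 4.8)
The plan is to prove the two assertions in turn, using the defining $q$-difference property $\ell_q(qx)=\ell_q(x)+1$ established just before the statement and the commutation relations from Lemma \ref{lemma:derivation1}.

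For part (1), the goal is to show that $\de_x$ and $\de$ map $C_E$ into itself. Recall that $C_E$ is exactly the subfield of $\cM er(C^*)$ annihilated by $\sgq-\mathrm{id}$, i.e.\ the elements $f$ with $\sgq(f)=f$. Since by Lemma \ref{lemma:derivation1} both $\de_x$ and $\de$ commute with $\sgq$, if $f\in C_E$ then $\sgq(\de_x f)=\de_x(\sgq f)=\de_x f$, so $\de_x f\in C_E$, and identically $\sgq(\de f)=\de(\sgq f)=\de f$, so $\de f\in C_E$. This is immediate once one observes that the stabilizer of an eigenspace (here the $\sgq$-invariants) is preserved by any operator commuting with $\sgq$. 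Note that here it is essential that $\de$ was built precisely so as to commute with $\sgq$; this is exactly why the correction term $\ell_q(x)\de_x$ was added to $\de_q$, since $\de_q$ alone does not commute with $\sgq$.

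For part (2), I would first identify the constants with respect to $\de_x$ alone, then cut down by $\de$. An element $f\in\cM er(C^*)$ with $\de_x f=x\frac{df}{dx}=0$ is a meromorphic function on $C^*$ with vanishing derivative, hence a constant, i.e.\ an element of $C$. So $\cM er(C^*)^{\de_x}=C$. Among such constants $c\in C$, the condition $\de(c)=0$ reads $\ell_q(x)\de_x(c)+\de_q(c)=\de_q(c)=0$, since $\de_x(c)=0$. Thus the joint constants are $\{c\in C: \de_q c=0\}$, the $\de_q$-constants of $C$. The main point is to show that these are exactly $\ol k$, the algebraic closure of $k$ inside $C$. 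The inclusion $\ol k\subseteq C^{\de_q}$ holds because $\de_q=q\frac{d}{dq}$ kills every element algebraic over $k$ (as $k\subseteq k(q)^{\de_q}$ and algebraic elements over a $\de_q$-constant field are again $\de_q$-constants in characteristic zero). For the reverse inclusion, recall from the construction that $\de_q$ is the continuous extension of $q\frac{d}{dq}$ from $k(q)$ to its completion and then to the algebraically closed complete field $C$; an element of $C$ killed by this derivation must be a limit of elements of $k(q)$ on which $q\frac{d}{dq}$ acts compatibly, and the only such are algebraic over $k$.

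The step I expect to be the main obstacle is the reverse inclusion $C^{\de_q}\subseteq\ol k$ in part (2), namely showing there are no ``new'' $\de_q$-constants in the complete algebraically closed field $C$ beyond $\ol k$. This requires a careful analysis of how $\de_q$ extends along the tower $k(q)\hookrightarrow K\hookrightarrow \widehat{K}\hookrightarrow C$ and an argument that the kernel of a nontrivial continuous derivation on $C$ meets $C$ only in the field algebraic over the original constants $k$; here one uses that $q$ is $\de_q$-``transcendental'' in the sense that $\de_q(q)=q\neq 0$, together with the valuation-theoretic description of $C$ as the completed algebraic closure, so that any $\de_q$-constant cannot involve $q$ analytically and must therefore be algebraic over $k$.
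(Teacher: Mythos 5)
Your part (1) and the reduction in part (2) coincide with the paper's proof: stability of $C_E$ follows from the commutation of $\de_x$ and $\de$ with $\sgq$ (Lemma \ref{lemma:derivation1}), the $\de_x$-constants of $\cM er(C^*)$ are $C$, and the joint constants are therefore exactly $C^{\de_q}$; the inclusion $\ol k\subseteq C^{\de_q}$ by algebraicity over the constant field $k$ is also correct. The genuine gap is the reverse inclusion $C^{\de_q}\subseteq\ol k$, which you yourself flag as the main obstacle and whose proposed justification does not work. Indeed, one of its assertions is false: a $\de_q$-constant of $C$ need not be a limit of elements of $k(q)$, since any element of $\ol k\smallsetminus k$ (say $\sqrt2$ when $k=\Q$) is a $\de_q$-constant of $C$ but does not lie in the completion $k((q^{-1}))$ of $k(q)$. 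Moreover the property you isolate, $\de_q(q)=q\neq0$, is not the mechanism that prevents new constants from appearing in the completion.

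The missing idea, which is precisely what the paper's one-line justification invokes, is that $|~|$ induces the \emph{trivial} topology on $k$, hence on $\Q$ and on $\ol k$. Concretely: $\what K$ is a finite extension of $k((q^{-1}))$, so in characteristic zero every element of the algebraic closure of $\what K$ lies in a Puiseux field $k'((q^{-1/m}))$ with $k'/k$ finite, and $C$ is the completion of this algebraic closure. For $f=\sum_n a_nq^{-n/m}\in k'((q^{-1/m}))$ one has $\de_q(q^{-n/m})=-\frac nm\,q^{-n/m}$, and since $|n/m|=1$ for $n\neq0$ (triviality on $\Q$) and $|a_n|\in\{0,1\}$ (triviality on $k'$), one gets the exact ultrametric identity $|\de_q f|=|f-a_0|$, where $a_0\in\ol k$ is the constant term. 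This shows both that $\de_q$ is $1$-Lipschitz and that the distance from $f$ to $\ol k$ equals $|\de_q f|$. Both facts pass to the completion: if $c=\lim f_i\in C$ satisfies $\de_q c=0$, then $\de_q f_i\to 0$, hence $c=\lim a_0^{(i)}$ with $a_0^{(i)}\in\ol k$; and since the valuation is trivial on $\ol k$, a Cauchy sequence in $\ol k$ is eventually constant, so $c\in\ol k$. Without the triviality of $|~|$ on $k$ the key estimate $|\de_q f|=|f-a_0|$ collapses (the integer eigenvalues $n$ of $\de_q$ on monomials could have small absolute value, as happens $p$-adically), so this hypothesis --- absent from your sketch --- is the crux of the step.
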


\begin{proof}
The first part of the proof immediately follows from the lemma above.
The constants of $\cM er(C^*)$ with respect to $\de_x$ coincide with $C$.
As far as the constants of $C$ with respect to $\de$ is concerned, we are reduced to determining
the constants $C^{\de_q}$ of $C$ with respect to $\de_q$.
Since the topology induced by $|~|$ on $k$ is trivial, one concludes that $C^{\de_q}$
is the algebraic closure of $k$ in $C$.
\end{proof}

Since
$$
[\de_x,\de]=\de_x\circ\de-\de\circ\de_x=\de_x(\ell_q(x))\de_x,
$$
we can consider a $(\de_x,\de)$-closure $\wtilde C_E$ of $C_E$ (see \cite{yaffenoncommder}, \cite{piercenoncommder} and \cite{singernoncommder}).
We extend $\sgq$ to the identity of $\wtilde C_E$.
The $(\de_x,\de)$-field of $\sgq$-constants $\wtilde C_E$
almost satisfies the hypothesis of \cite{HardouinSinger}, apart from the fact that $\de_x$, $\de$ do not commute with each other:

\begin{lemma}\label{lemma:derivation2}
There exists $h\in\wtilde C_E$ verifying the differential equation
$$
\de(h)=\de_x(\ell_q(x))h,
$$
such that the derivations
$$
\l\{\begin{array}{l}
\partial_1=h\de_x;\\
\partial_2=\de=\ell_q(x)\de_x+\de_q.
\end{array}\r.
$$
commute with each other and with $\sgq$.
\end{lemma}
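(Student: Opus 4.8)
The plan is to set $\partial_2=\de$, which already commutes with $\sgq$ by Lemma~\ref{lemma:derivation1}, and to look for $\partial_1$ of the form $h\de_x$ for a suitable nonzero $h\in\wtilde C_E$; the equation that $h$ must satisfy will be \emph{forced} by demanding that $\partial_1$ and $\partial_2$ commute. Computing the commutator on an arbitrary $f$ gives
\begin{equation*}
[\partial_1,\partial_2]f=h\de_x\de(f)-\de(h)\de_x(f)-h\de\de_x(f)=h\,[\de_x,\de]f-\de(h)\de_x(f),
\end{equation*}
and, since $[\de_x,\de]=\de_x(\ell_q)\de_x$, this collapses to $[\partial_1,\partial_2]f=\bigl(h\,\de_x(\ell_q)-\de(h)\bigr)\de_x(f)$. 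Hence $\partial_1$ and $\partial_2$ commute \emph{if and only if} $h$ satisfies $\de(h)=\de_x(\ell_q)\,h$, which is exactly the equation in the statement and which dictates the choice of $h$.

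It then remains to produce such an $h$ inside $\wtilde C_E$. Here I would use that $\de_x(\ell_q)$ is elliptic, \ie $\de_x(\ell_q)\in C_E$ (already observed before the lemma, since $\ell_q(qx)=\ell_q(x)+1$ yields $\sgq\de_x(\ell_q)=\de_x(\ell_q)$). Thus $\de(h)=\de_x(\ell_q)\,h$ is a rank-one homogeneous linear equation in the single derivation $\de$ with coefficient in $C_E$, and since $\wtilde C_E$ is a $(\de_x,\de)$-closure of $C_E$ it contains a nonzero solution $h$.

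Finally I would verify that $\partial_1=h\de_x$ commutes with $\sgq$. As $h\in\wtilde C_E$ and $\sgq$ acts as the identity on $\wtilde C_E$, we have $\sgq(h)=h$; together with $\de_x\sgq=\sgq\de_x$ this gives, for every $f$,
\begin{equation*}
\sgq(\partial_1 f)=\sgq(h)\,\sgq(\de_x f)=h\,\de_x(\sgq f)=\partial_1(\sgq f).
\end{equation*}
So $\partial_1$ commutes with $\sgq$, $\partial_2=\de$ does so by Lemma~\ref{lemma:derivation1}, and $[\partial_1,\partial_2]=0$ by the computation above; this is everything the lemma asserts.

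I expect the only delicate point to be the existence step, precisely because $\de_x$ and $\de$ do not commute, so one must be sure that the $(\de_x,\de)$-closure genuinely furnishes a solution of $\de(h)=\de_x(\ell_q)\,h$. I would handle this by first realizing $h$ in a $(\de_x,\de)$-extension of $C_E$: the constraint involves only $\de$, with elliptic coefficient, and the relation $[\de_x,\de]=\de_x(\ell_q)\de_x$ forces the compatible value $\de(\de_x h)=\de_x(\de_x(\ell_q))\,h$ on the prolongation, so that the differential ideal generated by $\de(h)-\de_x(\ell_q)h$ stays proper. The differential closedness of $\wtilde C_E$, in the sense of the references cited for non-commuting derivations, then yields the required nonzero $h$.
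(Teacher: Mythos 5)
Your proposal is correct and takes essentially the same route as the paper: there too, commutation with $\sgq$ follows from $\sgq(h)=h$ together with Lemma \ref{lemma:derivation1}, and the existence of $h$ is obtained by exactly the prolongation you sketch --- the paper endows $\cS=C_E\{y,\frac{1}{y}\}_{\de_x}$ with the $\de$-action $\de(\de_x^n(y))=\de_x^{n+1}(\ell_q)y$, checks its compatibility with $[\de_x,\de]=\de_x(\ell_q)\de_x$, passes to the fraction field of the quotient by a maximal $(\de_x,\de)$-ideal, and then invokes the $(\de_x,\de)$-closedness of $\wtilde C_E$. The only difference is that you spell out the commutator computation showing that $[\partial_1,\partial_2]=0$ is equivalent to $\de(h)=\de_x(\ell_q)h$, a verification the paper states and leaves to the reader.
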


\begin{proof}
Since $\ell_q(qx)=\ell_q(x)+1$, we have $\de_x(\ell_q(x))\in C_E$.
Therefore, we are looking for a solution $h$ of a linear differential
equation of order $1$, with coefficients in $C_E$.
Let us suppose that $h\in\wtilde C_E$ exists. Then since $\sgq h=h$, the identity
$\sgq\circ\partial_i=\partial_i\circ\sgq$ follows from Lemma \ref{lemma:derivation1} for $i=1,2$.
The verification of the fact that $\partial_1\circ\partial_2=\partial_2\circ\partial_1$ is straightforward and, therefore, left to the reader.
\par
We now prove the existence of $h$. Consider the differential rational function ring
$\cS:= C_{E}\{ y, \frac{1}{y} \}_{\de_x}$.
We endow $\cS$ with an extension of $\de$ as follows:
$$
\de(\de_x^n(y))= \de_{x}^{n+1}(\ell_{q}) y
$$
for all $n \geq 0$.
Since $\de_x\circ\de=\de_x(\ell_q(x))\de_x+\de\circ \de_x$, the definition of $\de$ over $\cS$ is consistent
and the commutativity relation between $\de_x$ and $\de$ extends from $C_E$ to $\cS$.
Now let $\mathfrak{M}$ be a maximal $(\de_x,\de)$-maximal ideal of $\cS$. Then, the
ring $\cS / \mathfrak{M}$ is a simple $(\de_x,\de)$-$C_{E}$-algebra. By Lemma 1.17 in \cite{vdPutSingerDifferential}, it is also an integral domain. Let $L$ be
the quotient field of $\cS / \mathfrak{M}$. The field $L$ is a $(\de_x,\de)$-field extension of $C_{E}$ which contains
a solution of the equation $\de(y)= \partial_x(\ell_q(x))y$.
Since $\wtilde C_E$ is the $(\de_x,\de)$-closure of $C_{E}$, there exists $h\in\wtilde C_E$ satisfying the differential equation
$\de(h)=\de_x(\ell_q(x))h$.
\end{proof}

Let $\De=\{\partial_1,\partial_2\}$.
Notice that, since $\ell_q(qx)=\ell_q(x)+1$ and $\sgq$ commutes with $\De$,
we have:
$$
\sgq\partial_1 (\ell_q(x))=\partial_1 (\ell_q(x)),\,\,\sgq(\partial_{2} (\ell_q(x))) =\partial_{2} (\ell_q(x)),
$$
and therefore $\partial_i(\ell_q(x))\in C_E$ for $i=1,2$.
We conclude that the subfield $C_{E}(x,\ell_q(x))$
of $\cM er(C^*)$ is actually a $(\sgq,\De)$-field.
Moreover, extending the action of $\sgq$ trivially to $\wtilde C_E$,
we can consider the $(\sgq,\De)$-field $\wtilde{C}_{E}(x,\ell_q(x))$.
Since the fields $C_E(x,\ell_q)$ and $\wtilde C_E$ are linearly disjoint over $C_{E}$
(see Lemma 6.11 in \cite{HardouinSinger}), $\wtilde C_E(x,\ell_q(x))$ has a $\De$-closed field of constants, which coincide with $\wtilde C_E$.

\subsection{Galois $\De$-group and $q$-dependency}

The subsection above shows that one can attach two linear differential algebraic groups to a $q$-difference system $\sg_q(Y)=A(x) Y$ with $A \in GL_n(C(x))$:
\begin{enumerate}
\item
The group $Gal^\De(\cM_A)$ which corresponds to Definition \ref{defn:pvgr} applied to the $(\sgq,\De)$-field
$\wtilde C_E(x,\ell_q)$.
This group is defined over  of $\wtilde C_E$ and
measures all differential relations satisfied by  the solutions of the $q$-difference equation with respect to $\delta_x$ and $\delta_q$. However its computation may be a little difficult. Indeed,
since the derivations of $\Delta$ are themselves defined above $\wtilde C_E$,
there is no hope of a general descent argument.
Nonetheless, in some special cases, one can use the linear disjunction
of the field $\wtilde C_E^\De$ of $\De$-constants and $C_E(x,\ell_q)$ above $C_E$ to simplify the computations.

\item
The Galois $\partial_2$-group
$Gal^{\partial_2}(\cM_A)$ which corresponds to Definition \ref{defn:pvgr} applied to
the $(\sgq,\partial_2)$-field $\wtilde C_E(x,\ell_q)$.
\end{enumerate}

Let us consider the $q$-difference system
\beq \label{eq:sys2}
Y(qx) =A(x) Y(x),
\hbox{~with $A\in GL_\nu(C_E(x, \ell_q))$.}
\eeq
In view of Proposition \ref{prop:descgr}, the Galois $\partial_2$-group
$Gal^{\partial_2}(\cM_A)$ attached to \eqref{eq:sys2} is  defined
above  the algebraic closure $\overline C_E$ of $C_E$.
We will prove below that, in fact, it descends to $C_E$ and thus reduce all the computations to calculus over the field of elliptic functions.
\par
The field $C_E(x, \ell_q)$ is a subfield of the field of meromorphic functions over $C^*$,
therefore \eqref{eq:sys2} has a fundamental solution matrix $U\in GL_\nu(\cM er (C^*))$. In fact,
the existence of such a fundamental solution $U$ is actually equivalent
to the triviality of the pull-back on $C^*$ of vector bundles over the torus $C^*/q^\Z$
(see \cite{Praag2} for an analytic argument).
The $(\sgq,\partial_2)$-ring $\cR_{\cM er}:=C_E(x, \ell_q)\{U,\det U^{-1}\}_{\partial_2}\subset \cM er(C^*)$
is generated as a $(\sgq,\partial_2)$-ring by a fundamental solution $U$ of \eqref{eq:sys2} and by $\det U^{-1}$ and has the property that
$\cR_{\cM er}^{\sgq}=C_E$, in fact:
$$
C_E\subset\cR^{\sgq}\subset\cM er( C^*)^{\sgq}=C_E.
$$
Notice that $\cR_{\cM er}$ does not need to be a simple $(\sgq,\partial_2)$-ring.
For this reason we call it a  weak $\partial_2$-Picard-Vessiot ring.
We have:

\begin{prop}
$Aut^{\sgq,\partial_2}(\cR_{\cM er}/  C_E(x, \ell_q))$ consists of the $C_E$-points of a linear algebraic $\partial_2$-group $G_{C_E}$ defined over $C_E$ such that
$G_{C_E}\otimes_{C_E}\overline C_E\cong Gal^{\partial_2}(\cM_A)$.
\end{prop}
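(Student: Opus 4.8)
The plan is to repeat the descent of Proposition~\ref{prop:descgr} with the field of elliptic functions $C_E$ in the role played there by the algebraically closed field of $\sgq$-constants, exploiting the fact that the representability argument of that proposition never used algebraic closedness but only that the $\sgq$-constants of the Picard--Vessiot ring do not grow beyond those of the base. For the weak ring $\cR_{\cM er}$ this is already in hand, since $\cR_{\cM er}^{\sgq}=C_E=(C_E(x,\ell_q))^{\sgq}$. Accordingly I would (i) set up the automorphism functor on $C_E$-$\partial_2$-algebras and prove it is represented by a $\partial_2$-group scheme $G_{C_E}$ over $C_E$, and (ii) identify $G_{C_E}\otimes_{C_E}\overline{C_E}$ with the group $Gal^{\partial_2}(\cM_A)$ that Proposition~\ref{prop:descgr} produces over $\overline{C_E}$.

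For (i) I would write $\cR_{\cM er}=C_E(x,\ell_q)\{X,\det X^{-1}\}_{\partial_2}/\kq$, where $X\mapsto U$ and $\kq$ is the $(\sgq,\partial_2)$-ideal of relations satisfied by the meromorphic fundamental solution $U$. Given a $C_E$-$\partial_2$-algebra $S$ and $\tau\in Aut^{\sgq,\partial_2}(\cR_{\cM er}\otimes_{C_E}S/C_E(x,\ell_q)\otimes_{C_E}S)$, the matrix $M_\tau:=U^{-1}\tau(U)$ is $\sgq$-invariant and hence, because $\cR_{\cM er}^{\sgq}=C_E$, has entries in $S$; thus $\tau(U)=U\,M_\tau$ with $M_\tau\in GL_\nu(S)$. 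That $\tau$ be a well-defined $(\sgq,\partial_2)$-automorphism, equivalently that the right translation $X\mapsto X\,M_\tau$ preserve the $\partial_2$-ideal $\kq$, imposes on $M_\tau$ a system of $\partial_2$-algebraic equations; these define a closed $\partial_2$-subgroup scheme $G_{C_E}\subset GL_{\nu}$ over $C_E$ in Kolchin's sense, and $\tau\mapsto M_\tau$ gives the required isomorphism of functors. This is the argument of \cite{HardouinSinger}, p.~368, reused in Proposition~\ref{prop:descgr}, where only stability of the constants intervenes.

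For (ii), since the defining equations of $G_{C_E}$ are extracted from $\kq$ by a construction compatible with the flat extension $C_E\to\overline{C_E}$, the base change $G_{C_E}\otimes_{C_E}\overline{C_E}$ represents the automorphism functor of $\cR_{\cM er}\otimes_{C_E}\overline{C_E}$ over $\overline{C_E}(x,\ell_q)$. I would compare this with the \emph{simple} $\partial_2$-Picard--Vessiot ring $\cR^\#$ of Proposition~\ref{prop:pvdesc} built over $\overline{C_E}(x,\ell_q)$: both carry a fundamental solution of $\sgq(Y)=A(x)Y$ and both have $\sgq$-constants $\overline{C_E}$, so inside a common $(\sgq,\partial_2)$-extension the two solution matrices differ by a constant matrix in $GL_\nu(\overline{C_E})$, which identifies the two automorphism functors and yields $G_{C_E}\otimes_{C_E}\overline{C_E}\cong Gal^{\partial_2}(\cM_A)$. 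The main obstacle is exactly this last comparison: $\cR_{\cM er}$ is only a \emph{weak} Picard--Vessiot ring and need not be $\sgq$-simple, so a priori its automorphism group could be a proper subgroup; ruling this out requires that no $\partial_2$-relation be lost in passing from $\cR^\#$ to $\cR_{\cM er}$, which I would obtain from the coincidence of the $\sgq$-constants together with the linear disjointness of $\wtilde{C_E}$ and $C_E(x,\ell_q)$ over $C_E$ recorded above, forcing the comparison matrix to be $\partial_2$-generic.
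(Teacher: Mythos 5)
Your step (i) is sound: the computation that $M_\tau=U^{-1}\tau(U)$ is $\sgq$-invariant, hence lies in $GL_\nu(S)$ because $(\cR_{\cM er}\otimes_{C_E}S)^{\sgq}=S$ (this only uses $\cR_{\cM er}^{\sgq}=C_E$ and freeness of $S$ over the field $C_E$), together with the fact that stability of $\kq$ under right translation is a $\partial_2$-algebraic condition, is exactly the representability argument the paper relies on. Note that the paper's own ``proof'' is a one-line citation to Theorem 9.5 of \cite{diviziohardouin}, which carries out this scheme for the derivation $x\frac{d}{dx}$; so for this half you are reconstructing, correctly, the cited argument.

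The gap is in your step (ii). You assert that $\cR_{\cM er}\otimes_{C_E}\ol C_E$ and the simple ring $\cR^\#$ of Proposition \ref{prop:pvdesc} sit ``inside a common $(\sgq,\partial_2)$-extension'' in which the two fundamental matrices differ by an element of $GL_\nu(\ol C_E)$. Producing such a common extension with $\sgq$-constants reduced to $\ol C_E$ is the entire difficulty, and it cannot be taken for granted: if it existed, then writing $Z=UC$ with $C\in GL_\nu(\ol C_E)$ would make the two rings literally coincide inside that extension, i.e.\ it would prove that any two $\partial_2$-Picard--Vessiot rings with constants $\ol C_E$ are isomorphic over $\ol C_E$. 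In the parameterized setting this is false in general over a merely algebraically closed field of constants --- the paper itself warns, in the remark following Proposition \ref{prop:pvdesc} (citing \cite{wibmer2011existence}), that two such rings may only become isomorphic after a finitely generated extension of the constants. The ingredients you invoke to close this hole do not do so: the linear disjointness of $\wtilde C_E$ and $C_E(x,\ell_q)$ over $C_E$ concerns the base field, not a compatible embedding of the two solution rings, and ``forcing the comparison matrix to be $\partial_2$-generic'' is not an argument. The comparison has to be made after base change to the $\partial_2$-closure $\wtilde C_E$: tensor $\cR_{\cM er}$ up to $\wtilde C_E(x,\ell_q)$, pass to the quotient by a maximal $(\sgq,\partial_2)$-ideal, and use the uniqueness of $\partial_2$-Picard--Vessiot rings over $\partial_2$-closed constants (Proposition 6.16 of \cite{HardouinSinger}), or equivalently the Tannakian fact, already used in Proposition \ref{prop:descgr}, that two differential fiber functors become isomorphic over a common $\partial_2$-closure (\cite{GilletGorchinskyOvchinnikov}). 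This yields $G_{C_E}\otimes_{C_E}\wtilde C_E\cong Gal^{\partial_2}(\cM_A)\otimes\wtilde C_E$, which is how the isomorphism in the statement is to be understood (via the group of Proposition \ref{prop:descgr}); an isomorphism literally over $\ol C_E$ is not what your argument, as written, can deliver.
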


\begin{proof}
See the proof of
Theorem 9.5 in \cite{diviziohardouin}, which gives an analogous statement for the derivation $x\frac{d}{dx}$.
\end{proof}

Therefore, as in Proposition \ref{prop:gatrans}, one can prove:

\begin{cor}\label{cor:gatransCE}
Let  $a_1, ..., a_n \in C_E(x,\ell_q)$ and let $S\subset\cM er(C^*)$ be a $(\sgq,\partial_2)$-extension of $C_E(x,\ell_q)$ such that $S^{\sgq} =C_E$. If
$z_1,...,z_n \in S$  satisfy
$\sg (z_i) -z_i =a_i$ for $i=1,...,n$,
then $z_1,...,z_n \in S$  satisfy a nontrivial $\partial_2$-relation
over $C_E(x,\ell_q)$ if and only if there exists a nonzero
homogeneous linear differential polynomial $L(Y_1,...,Y_n)$ with coefficients in $C_E$ and
an element $ f \in C_E(x,\ell_q)$ such that
$L(a_1, ..., a_n) =\sg (f)-f$.
\end{cor}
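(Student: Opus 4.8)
The plan is to run the proof of Proposition~\ref{prop:gatrans} over the descended group $G_{C_E}$ produced by the proposition above, with the field of elliptic functions $C_E$ playing the role that the $\partial$-closed field of constants played there. First I would attach to the family $z_1,\dots,z_n$ the $\sgq$-difference system $\sgq(Y)=AY$, where $A\in GL_{n+1}(C_E(x,\ell_q))$ is the lower unipotent matrix encoding the relations $\sgq(z_i)=z_i+a_i$, so that the column $(1,z_1,\dots,z_n)$ solves $\sgq(Y)=AY$. Because $\sgq$ fixes the solution $1$ and shifts each $z_i$ by an element of $S^{\sgq}=C_E$, every $(\sgq,\partial_2)$-automorphism acts by $z_i\mapsto z_i+c_i$ with $(c_1,\dots,c_n)\in C_E^n$. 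Hence, by the proposition above, the attached $\partial_2$-group $G_{C_E}$ is a $\partial_2$-subgroup of ${\mathbb G}_a^n$ defined over $C_E$, with $G_{C_E}\otimes_{C_E}\overline C_E\cong Gal^{\partial_2}(\cM_A)$.

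The computational core is a cocycle identity. For a homogeneous linear $\partial_2$-polynomial $L(Y_1,\dots,Y_n)=\sum_{i,j}\lambda_{i,j}\partial_2^{\,j}(Y_i)$ with $\lambda_{i,j}\in C_E$, the facts that $\sgq$ fixes $C_E$ and commutes with $\partial_2$ give
$$
\sgq\bigl(L(z_1,\dots,z_n)\bigr)-L(z_1,\dots,z_n)=L(a_1,\dots,a_n),
$$
while an automorphism sending $z_i\mapsto z_i+c_i$ sends $L(z_1,\dots,z_n)$ to $L(z_1,\dots,z_n)+L(c_1,\dots,c_n)$. I would then assemble the statement as a chain of equivalences over $F=C_E(x,\ell_q)$: the $z_i$ satisfy a nontrivial $\partial_2$-relation iff they are $\partial_2$-algebraically dependent, iff $G_{C_E}\subsetneq{\mathbb G}_a^n$ (comparing $\partial_2$-transcendence degree with $\partial_2$-dimension, in the descended form of Proposition~\ref{prop:degtrs} underlying the proposition above), iff some nonzero homogeneous linear $L$ with coefficients in $C_E$ vanishes on $G_{C_E}$ (Cassidy's classification~\cite{cassdiffgr} of the $\partial_2$-subgroups of ${\mathbb G}_a^n$, together with $G_{C_E}$ being defined over $C_E$). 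By the second displayed formula, $L$ vanishes on $G_{C_E}$ iff $L(z_1,\dots,z_n)$ is fixed by the whole group, i.e.\ lies in $F$ by the Galois correspondence; and by the first displayed formula, setting $f=L(z_1,\dots,z_n)\in F$ this reads precisely $L(a_1,\dots,a_n)=\sgq(f)-f$. Conversely, if $L(a_1,\dots,a_n)=\sgq(f)-f$ for some $f\in F$, then $L(z_1,\dots,z_n)-f$ is a $\sgq$-constant, hence lies in $S^{\sgq}=C_E\subset F$, yielding the nontrivial relation; this backward step is elementary and field-independent.

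The hard part is not any of the routine manipulations above but ensuring the two descent points on which the chain rests, namely that $L$ can be taken with coefficients in $C_E$ and that the ring of invariants of $G_{C_E}$ equals $F$, even though $C_E$ is neither $\partial_2$-closed nor algebraically closed. Both are supplied by the proposition above and its proof, modelled on Theorem~9.5 of~\cite{diviziohardouin}: the weak $\partial_2$-Picard--Vessiot ring $\cR_{\cM er}$ realises the descent of $Gal^{\partial_2}(\cM_A)$ to a group scheme $G_{C_E}$ defined over $C_E$ with $\cR_{\cM er}^{\sgq}=C_E$, so that the defining $\partial_2$-ideal of $G_{C_E}$ is already defined over $C_E$ (whence the linear relation $L$ may be chosen over $C_E$ rather than over $\overline C_E$) and the associated Galois correspondence has fixed field exactly $F=C_E(x,\ell_q)$. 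With these inputs the equivalences close up and the corollary follows exactly as Proposition~\ref{prop:gatrans}.
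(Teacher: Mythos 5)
Your proof is correct and follows essentially the same route as the paper's: both reduce to the weak Picard--Vessiot ring $\cR_{\cM er}$ attached to a unipotent system encoding $\sgq(z_i)=z_i+a_i$ (the paper uses a $2n\times 2n$ block-diagonal matrix rather than your $(n+1)\times(n+1)$ one, which is immaterial), observe that the automorphism group is a $\partial_2$-subgroup of $(C_E,+)^n$, invoke Cassidy's classification to produce a nonzero linear $L$ with coefficients in $C_E$, and conclude by the Galoisian fixed-field argument that $f=L(z_1,\dots,z_n)\in C_E(x,\ell_q)$ satisfies $L(a_1,\dots,a_n)=\sgq(f)-f$. The converse direction ($L(z_1,\dots,z_n)-f$ being $\sgq$-invariant, hence in $C_E$) is identical in both; your version merely makes explicit the transcendence-degree step and the cocycle identity that the paper leaves implicit in its citation of Proposition 3.1 of Hardouin--Singer.
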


\begin{proof}
The proof strictly follows Proposition 3.1 in \cite{HardouinSinger}, so we only sketch it.
First of all, if such an $L$ exists, $L(z_1,...,z_n)-f$ is $\sgq$-invariant and therefore belongs to
$C_E$, which gives a nontrivial $\partial_2$-relation among the $z_1,...,z_n$.
On the other hand, we can suppose that $S$ coincides with the ring $\cR_{\cM er}$ introduced above for the system
$\sgq Y=AY$, where $A$ is the diagonal block matrix
$$
A=diag\l(\begin{pmatrix}1&a_1\\0&1\end{pmatrix},\dots,
\begin{pmatrix}1&a_n\\0&1\end{pmatrix}\r).
$$
In fact, a solution matrix of $\sgq Y=AY$ is given by
$$
U=diag\l(\begin{pmatrix}1&z_1\\0&1\end{pmatrix},\dots,
\begin{pmatrix}1&z_n\\0&1\end{pmatrix}\r)\in  GL_{2n}(\cM er(C^*)).
$$
This implies that $Aut^{\sgq,\partial_2}(\cR_{\cM er}/  C_E(x, \ell_q))$ is a $\partial_2$-subgroup of the group $(C_E,+)^n$,
therefore there exists a nonzero
homogeneous linear differential polynomial $L(Y_1,...,Y_n)$ with coefficients in $C_E$ such that
$Aut^{\sgq,\partial_2}(\cR_{\cM er}/  C_E(x, \ell_q))$ is contained in the set of zeros of $L$ in $(C_E,+)^n$.
We set $f=L(z_1,\dots,z_n)$.
A Galoisian argument shows that $f\in C_E(x,\ell_q)$ and that $L(a_1,\dots,a_n)=\sgq(f)-f$.
\end{proof}

\subsection{Galoisan approach to heat equation}

We want to show how the computation of the Galois $\partial_2$-group of the $q$-difference equation $y(qx)=qx y(x)$ leads to the heat equation.
We recall that the Jacobi theta function verifies $\theta_q(qx) =qx \theta_q(x)$.
Corollary \ref{cor:gatransCE} applied to this equation becomes: the function $\theta_q$ satisfies a $\partial_2$-relation
with coefficients in $C_E(x,\ell_q)$ if and only if there exist $a_1,...,a_m\in C_E$ and $f \in C_E(x,\ell_q)$ such that
$$
\sum_{i=0}^m a_i \partial_2^{i} \l(\frac{\partial_2(qx)}{qx}\r) = \sgq(f) -f.
$$
A simple computation  leads to
$$
\frac{\partial_2(qx)}{qx} =\ell_q+1 = \sgq\l(\frac{1}{2} \l(\ell_q^2 + \ell_q\r) \r)- \l(\frac{1}{2} \l(\ell_q^2 +\ell_q\r)\r)
$$
and therefore to
$$
\sgq\l( 2 \frac{\partial_2(\theta_q)}{\theta_q} -\l(\ell_q^2 + \ell_q\r)\r) =2 \frac{\partial_2(\theta_q)}{\theta_q} -(\ell_q^2+\ell_q).
$$
The last identity is equivalent to the fact that
\beq\label{eq:identityheat}
2 \frac{\partial_2(\theta_q)}{\theta_q} - (\ell_q^2 + \ell_q) =  2\frac{\delta_q(\theta_q)}{\theta_q} + \ell_q^2 - \ell_q
\eeq
is an elliptic function and
implies that
\beq\label{eq:groupheat}
Gal^{\partial_2}(\cM_{qx}) \subset\l\{ \frac{\partial_2(c)}{c} =0\r\}.
\eeq
The heat equation\footnote{One deduces from \eqref{eq:heat} that
$\wtilde\theta(q,x):=\theta_q(q^{1/2}x)$ satisfies the
equation $2\de_q\wtilde\theta=\de_x^2\wtilde\theta$.
Then, to recover the classical form of the heat equation, it is enough make the change of variables $q=\exp(-2i\pi\tau)$ and $x=\exp(2i \pi z)$.}
\beq\label{eq:heat}
2\de_q\theta_q=-\de_x^2\theta_q+\de_x\theta_q
\eeq
can be rewritten as
$$
2 \frac{\de_{q}\theta_q}{\theta_{q}}+\ell_{q}^2 -\ell_{q}= - \de_{x}(\ell_{q}).
$$
Since $\de_{x}(\ell_q) $ is an elliptic function,
taking into account \eqref{eq:identityheat}, we see
that the Galois $\partial_2$-group of $\theta_q$, as described in \eqref{eq:groupheat}, can somehow be viewed as
a Galoisian counterpart of the heat equation.

\subsection{q-hypertranscendency of rank 1 $q$-difference equations}

We want to study the $q$-dependency of the solutions of a $q$-difference equation of the form $y(qx)=a(x) y(x)$, where $a(x)\in k(q,x)$,
$a(x)\neq 0$.

\begin{thm}\label{thm:qconfluencerank1}
Let $u$ be a  nonzero meromorphic solution of $y(qx)=a(x) y(x)$, $a(x)\in k(q,x)$, in the sense of the previous subsections.
The following statements are equivalent:
\begin{enumerate}

\item
$a(x) = \mu  x^r \frac{g(qx)}{g(x)}$ for some $r \in \Z$, $g \in k(q,x)$ and $\mu \in k(q)$.

\item
$u$ is a solution of a nontrivial algebraic $\de_x$-relation with coefficients in
$C_E(x,\ell_q)$ (and therefore in $C(x)$).

\item
$u$ is a solution of a nontrivial algebraic $\partial_2$-relation with coefficients in
$C_E(x,\ell_q)$.

\end{enumerate}
\end{thm}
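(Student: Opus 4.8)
The plan is to prove the three equivalences by establishing a cycle of implications, exploiting the rank~$1$ structure to make everything explicit. The key reduction is that for a rank~$1$ equation $\sgq(u)=a(x)u$, the logarithmic derivatives of $u$ along our derivations are governed by the function $a$ itself. Concretely, I would set $z=\de_x(u)/u$ and note that $\sgq(z)-z=\de_x(a)/a$, so $z$ satisfies an inhomogeneous rank~$1$ difference equation of exactly the type treated in Corollary~\ref{cor:gatransCE} and Proposition~\ref{prop:gatrans}. This translates ``$u$ satisfies a $\de_x$-relation'' into the existence of a homogeneous linear $\de_x$-polynomial $L$ and an $f\in C_E(x,\ell_q)$ with $L(\de_x(a)/a)=\sgq(f)-f$, and similarly for $\partial_2$. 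The heart of the argument is thus to identify exactly when such an $f$ exists in terms of the factorization of $a$.

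First I would prove $(1)\Rightarrow(2)$ and $(1)\Rightarrow(3)$, the easy directions. If $a(x)=\mu x^r g(qx)/g(x)$, then $\theta_q^r\,g(x)\,u^{-1}$ (with an appropriate constant adjustment for $\mu$) is $\sgq$-invariant, hence lies in $C_E$; this exhibits $u$ explicitly, up to elliptic factors, as a product of $\theta_q^{\pm}$, a power of $x$, and a rational function. Since $\theta_q$ satisfies the heat equation \eqref{eq:heat} and $x,\ell_q$ satisfy visible algebraic $\de_x$- and $\partial_2$-relations over $C_E$, one reads off a nontrivial relation directly. I would carry out the bookkeeping by computing $\de_x(a)/a$ and $\partial_2(a)/a$ for such an $a$ and verifying that each is of the form $\sgq(f)-f$ plus something annihilated by a suitable $L$, invoking that $\de_x(\ell_q)$ and the $\partial_i(\ell_q)$ are elliptic.

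For the converse directions $(2)\Rightarrow(1)$ and $(3)\Rightarrow(1)$, the strategy is to use the criteria to extract the factorization of $a$. Suppose $u$ satisfies a nontrivial $\de_x$-relation; by Corollary~\ref{cor:gatransCE} applied to $z=\de_x(u)/u$, there is a nonzero $L$ and $f\in C_E(x,\ell_q)$ with $L(\de_x(a)/a)=\sgq(f)-f$. Here I would use the concrete shape of elements of $C_E(x,\ell_q)$ and the relation $\sgq(\ell_q)=\ell_q+1$ to analyze the equation $\sgq(f)-f=L(\de_x(a)/a)$ by comparing poles and residues on the torus $C^*/q^\Z$. The point is that $\de_x(a)/a$ is a rational function of $x$, and the solvability of the cohomological equation $\sgq(f)-f=(\text{given})$ forces the residues (and the degree data) of $a$ to be constrained precisely so that $a$ admits the asserted form $\mu x^r g(qx)/g(x)$; the integer $r$ records the ``total degree'' obstruction and $g$ absorbs the coboundary part. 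The same analysis runs for $\partial_2$, using that $\partial_2=\de$ differs from $\de_q$ by $\ell_q\de_x$ and that $\partial_2(\ell_q)\in C_E$.

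The main obstacle I expect is precisely this last analysis: translating the abstract solvability condition $L(\de_x(a)/a)=\sgq(f)-f$ into the explicit multiplicative factorization of $a$. The difficulty is that $f$ ranges over $C_E(x,\ell_q)$, a field enlarged by the transcendental $\ell_q$, so one must show that the presence of $\ell_q$ does not enlarge the space of solvable right-hand sides beyond what the naive $q$-difference cohomology of rational functions predicts. I would handle this by decomposing $f$ according to its $\ell_q$-degree and showing, via the twist $\sgq(\ell_q)=\ell_q+1$, that the top-degree component must be an elliptic constant, reducing to the purely rational theory of $q$-difference equations where the factorization $a=\mu x^r g(qx)/g(x)$ is the classical characterization of equations whose solutions are $q$-hypergeometric in the relevant sense. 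Once this descent in $\ell_q$-degree is established, the equivalence closes up.
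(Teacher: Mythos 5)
Your skeleton matches the paper's: both pass to logarithmic derivatives, so that Corollary \ref{cor:gatransCE} converts differential algebraicity of $u$ into solvability of a telescoping equation $L\left(\partial_2(a)/a\right)=\sgq(f)-f$ (resp.\ its $\de_x$-analogue), and both then rule out solvability by a pole analysis. The paper, however, fills the two hard steps with specific tools you do not supply. For $(1)\Leftrightarrow(2)$ it does not reprove anything: it quotes Theorem 1.1 of \cite{hardouincompositio} and only remarks that $\de_x$-algebraicity over $C_E(x,\ell_q)$ and over $C(x)$ coincide. For $(1)\Leftrightarrow(3)$ it first normalizes $a$ by Lemma 3.3 of \cite{hardouincompositio}, writing $a=\wtilde a\cdot f(qx)/f(x)$ where the zeros and poles of $\wtilde a$ lie in pairwise disjoint $q^{\Z}$-orbits; it then computes $\partial_2^{j}$ of the simple fractions in closed form (formula \eqref{eq:derivexpr}, whose numerators $\left(x\ell_q-\de_q(\alpha_i)\right)^{j+1}$ mix $x$ and $\ell_q$, and where $\de_q$ acts nontrivially on $\alpha_i\in\overline{k(q)}$), and finally compares highest-order polar terms of the partial fraction decomposition in $x$ over the field $C_E(\ell_q)$, concluding that a pole at some $q^{s}\alpha_i$, $s\neq 0$, would have to occur in $h(qx)-h(x)$ but cannot occur on the left-hand side.

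Two places in your sketch are genuine gaps. First, your descent on the $\ell_q$-degree of $f$ tacitly assumes $f$ is a polynomial in $\ell_q$ with coefficients in $C_E(x)$, whereas Corollary \ref{cor:gatransCE} only produces $f\in C_E(x,\ell_q)$; you must either exclude denominators in $\ell_q$ (a real argument about how $\sgq\colon \ell_q\mapsto\ell_q+1$, $x\mapsto qx$ moves irreducible factors) or, as the paper does, sidestep the issue by working with partial fractions in $x$ over $C_E(\ell_q)$, where only the poles in the orbits $q^{\Z}\alpha_i$ need to be tracked. Second, the orbit-normalization of $a$ is not optional: the telescoping equation \emph{is} solvable exactly when $a$ is a coboundary times $\mu x^{r}$, so before any pole counting you must split off the coboundary part of $a$; writing that ``$g$ absorbs the coboundary part'' assumes the conclusion rather than proving it. The paper's appeal to Lemma 3.3 of \cite{hardouincompositio} performs precisely this splitting, after which poles of $a$ lying in a common $q$-orbit cannot conspire and the propagation argument gives a clean contradiction (this is how \eqref{eq:qdep} is refuted). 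Also note that your ``reduction to the purely rational theory'' is not a citation: after projecting to top $\ell_q$-degree you are left with a specific rational telescoping equation with poles of order $m+1$ at the $\alpha_i$, whose insolubility is exactly the $q$-orbit argument above. With these repairs your plan works, and is then essentially the paper's proof organized degree-first instead of poles-first.
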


First of all, we remark that the equivalence between 1. and 2. follows from Theorem 1.1 in \cite{hardouincompositio}
(replacing $\C$ by $k(q)$ is not an obstacle in the proof). Moreover being $\de_x$-algebraic over $C_E(x,\ell_q)$ or over
$C(x)$ is equivalent, since $C_E(\ell_q)$ is $\de_x$-algebraic over $C(x)$.
The equivalence between 1. and 3. is proved in the proposition below:

\begin{prop}
Let $u$ be a  nonzero meromorphic solution of $y(qx)=a(x) y(x)$, with $a(x)\in k(q,x)$.
Then $u$ satisfies an algebraic differential equation with respect to $\partial_2$ with coefficients
in $C_E(x,\ell_q)$ if and only if $a(x) = \mu  x^r \frac{g(qx)}{g(x)}$ for some $r \in \Z$, $g \in k(q,x)$ and $\mu \in k(q)$.
\end{prop}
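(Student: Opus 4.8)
The plan is to reduce the statement, in both directions, to the $\sgq$-coboundary criterion of Corollary \ref{cor:gatransCE} applied to the logarithmic $\partial_2$-derivative of $u$, and then to analyse the resulting identity through its expansion in powers of $\ell_q$. First I would set $v=\frac{\partial_2 u}{u}$. Applying $\partial_2$ to $\sgq(u)=a u$ and using $\sgq\partial_2=\partial_2\sgq$ gives the rank-one inhomogeneous equation $\sgq(v)-v=\frac{\partial_2 a}{a}$, with $\frac{\partial_2 a}{a}\in C_E(x,\ell_q)$. Since $\partial_2^{j}u=u\,P_j(v,\partial_2 v,\dots,\partial_2^{j-1}v)$ for suitable polynomials $P_j$, the $\partial_2$-fields generated over $C_E(x,\ell_q)$ by $u$ and by $v$ have simultaneously finite transcendence degree, so $u$ is $\partial_2$-algebraic if and only if $v$ is. Taking for $S$ the $(\sgq,\partial_2)$-subextension of $\cM er(C^*)$ generated by $C_E(x,\ell_q)$ and $v$, whose $\sgq$-constants are $C_E$, Corollary \ref{cor:gatransCE} with $n=1$ rephrases statement 3 as the existence of a nonzero homogeneous linear differential polynomial $L=\sum_{i=0}^{m}c_i\partial_2^{i}$, $c_i\in C_E$, and of $f\in C_E(x,\ell_q)$ with $L(\frac{\partial_2 a}{a})=\sgq(f)-f$.

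For the implication $1\Rightarrow 3$ I would exhibit such data with $L=\mathrm{id}$. Writing $a=\mu x^{r}\frac{\sgq(g)}{g}$ and using $\partial_2 x=\ell_q x$ one gets $\frac{\partial_2 a}{a}=\frac{\partial_2\mu}{\mu}+r\ell_q+\big(\sgq(w)-w\big)$ with $w=\frac{\partial_2 g}{g}\in C_E(x,\ell_q)$ and $\frac{\partial_2\mu}{\mu}\in C_E$. The elementary identities $r\ell_q=\sgq(rF)-rF$ with $F=\frac12(\ell_q^{2}-\ell_q)$, and $\frac{\partial_2\mu}{\mu}=\sgq\big(\frac{\partial_2\mu}{\mu}\,\ell_q\big)-\frac{\partial_2\mu}{\mu}\,\ell_q$, both consequences of $\sgq(\ell_q)=\ell_q+1$, then show that $\frac{\partial_2 a}{a}$ is itself a $\sgq$-coboundary in $C_E(x,\ell_q)$, so that $L=\mathrm{id}$ and a suitable $f$ work.

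The substantial direction is $3\Rightarrow 1$, where I would use the $\ell_q$-degree filtration of $C_E(x,\ell_q)=C_E(x)(\ell_q)$, $\ell_q$ being transcendental over $C_E(x)$. Write $\frac{\partial_2 a}{a}=\ell_q A_x+A_q$ with $A_x=\frac{\delta_x a}{a}$, $A_q=\frac{\delta_q a}{a}\in k(q,x)$. If $A_x\in k(q)$, a logarithmic-derivative computation forces $a=\mu x^{r}$ and we are done; otherwise $\delta_x^{i}A_x\neq0$ for all $i$. The key computation is that $\partial_2$ raises the $\ell_q$-degree by at most one and, because the coefficients of $A_x,A_q$ lie in $k(q)\subset C$, the top $\ell_q$-coefficient of $\partial_2^{i}(\frac{\partial_2 a}{a})$ equals $\delta_x^{i}A_x$; hence $L(\frac{\partial_2 a}{a})$ has $\ell_q$-degree $m+1$ with leading coefficient $c_m\delta_x^{m}A_x$. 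As $\sgq$ preserves $\ell_q$-degree and acts on leading coefficients through $\sgq$ itself, comparing the top coefficients in $L(\frac{\partial_2 a}{a})=\sgq(f)-f$ yields $\delta_x^{m}A_x=\sgq(\tilde f)-\tilde f$ with $\tilde f\in C_E(x)$. This is exactly the $\sgq$-coboundary condition expressing that $u$ is $\delta_x$-algebraic, \ie statement 2, and statement 1 then follows from the equivalence $1\Leftrightarrow2$ of Theorem 1.1 in \cite{hardouincompositio}.

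I expect the \emph{main obstacle} to be making the top-coefficient comparison rigorous: since $\sgq$ sends $\ell_q$ to $\ell_q+1$, it mixes the $\ell_q$-gradings, and one must rule out that $\deg_{\ell_q}f$ exceeds $m+1$. This reduces to the auxiliary lemma that the only $g\in C_E(x)$ with $\sgq(g)-g\in C_E$ lie in $C_E$ (equivalently, $\sgq-1$ hits no nonzero constant on $C_E(x)$), which I would establish by a pole-and-growth analysis in the variable $x$, using $|q|>1$ so that the $\sgq$-orbits in $C^{*}$ are infinite. Verifying the transcendence of $\ell_q$ over $C_E(x)$ and the precise leading-coefficient rule for $\partial_2^{i}$ — where the contributions of $\delta_q$ to the top $\ell_q$-degree must be checked to cancel, leaving $\delta_x$ on $k(q,x)$ — are the remaining technical points.
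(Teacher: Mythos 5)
Your strategy for the hard direction is genuinely different from the paper's: the paper first normalizes $a$ (via Lemma 3.3 of \cite{hardouincompositio}) so that its zeros and poles lie in distinct $q$-orbits, splits off $\mu$ and $x^r$ as explicit $\sigma_q$-coboundaries, and then kills the remaining polar part by a partial-fraction argument in $x$ over $C_E(\ell_q)$, whereas you filter by $\ell_q$-degree so as to reduce statement 3 to the $\delta_x$-coboundary criterion (statement 2) and then quote $2\Leftrightarrow 1$; your direction $1\Rightarrow 3$ coincides with the paper's. However, there is a concrete gap in your degree comparison. Your auxiliary lemma (that $\sigma_q-1$ hits no nonzero element of $C_E$ on $C_E(x)$) rules out $\deg_{\ell_q}f\geq m+3$, but it does \emph{not} rule out $\deg_{\ell_q}f=m+2$: the lemma only forces the leading coefficient $f_{m+2}$ to be $\sigma_q$-invariant, i.e.\ to lie in $C_E$, not to vanish, and for a nonzero $e\in C_E$ one has
$$
\sigma_q\bigl(e\,\ell_q^{\,m+2}\bigr)-e\,\ell_q^{\,m+2}=e\bigl[(\ell_q+1)^{m+2}-\ell_q^{\,m+2}\bigr],
$$
a polynomial of degree $m+1$ with leading coefficient $(m+2)e$. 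So in this case the comparison of the coefficients of $\ell_q^{\,m+1}$ yields
$$
c_m\,\delta_x^{m}A_x=\sigma_q(f_{m+1})-f_{m+1}+(m+2)f_{m+2},\qquad f_{m+2}\in C_E,
$$
and the asserted conclusion ``$\delta_x^{m}A_x=\sigma_q(\tilde f)-\tilde f$'' does not follow: by your own lemma the extra constant cannot be reabsorbed into a coboundary inside $C_E(x)$ unless it is zero. (A smaller omission: before any degree count you must replace $f$ by its polynomial part in $\ell_q$, which is legitimate since $\ell_q$ is transcendental over $C_E(x)$, $\sigma_q$ preserves the splitting of $C_E(x)(\ell_q)$ into polynomial and proper parts, and the left-hand side is polynomial.)

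The gap is fixable by one more argument of the same flavor. From the display you get $\delta_x^{m}A_x-c=\sigma_q(\tilde f)-\tilde f$ with $\tilde f=c_m^{-1}f_{m+1}\in C_E(x)$ and $c=(m+2)c_m^{-1}f_{m+2}\in C_E$. Now compare polynomial parts in $x$: writing $\tilde f$ as a rational function of $x$ with coefficients in $C_E$ (these are fixed by $\sigma_q$), the polynomial part of $\sigma_q(\tilde f)-\tilde f$ is $\sum_j u_j(q^j-1)x^j$, whose constant term is zero, while the constant term of the polynomial part of $\delta_x^{m}A_x\in k(q,x)$ lies in $k(q)$. Hence $c\in k(q)\subset C$, so $\delta_x c=0$, and applying $\delta_x$ (which commutes with $\sigma_q$ and preserves $C_E(x)$) gives the clean criterion $\delta_x^{m+1}A_x=\sigma_q(\delta_x\tilde f)-\delta_x\tilde f$, i.e.\ statement 2 with $L'=\delta_x^{m+1}$. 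With this repair, together with the two verifications you correctly flag (the leading-coefficient rule for $\partial_2$, which works because $\partial_2(\ell_q)\in C_E$ and the top coefficient stays in $k(q,x)$ throughout the induction, and the pole-orbit proof of the auxiliary lemma using that $q$-orbits are infinite), your proof is complete, and it has the merit of exhibiting the implication $3\Rightarrow 2$ directly rather than re-running a pole analysis parallel to the one in \cite{hardouincompositio}.
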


\begin{proof}
By Lemma 3.3 in \cite{hardouincompositio}, there exists $f(x)\in k(q,x)$ such that $a(x) =\wtilde a(x)\frac{f(qx)}{f(x)}$ and
$\wtilde a(x)$ has the property that if $\a$ is a zero (resp. a pole) of $\wtilde a(x)$, then
$q^n\a$ is neither a zero nor a pole of $\wtilde a(x)$ for any $n\in\Z\smallsetminus\{0\}$.
Replacing $u$ by $\frac{u}{f(x)}$, we can suppose that $a(x)=\wtilde a(x)$ and we can write $a(x)$
in the form:
$$
a(x) =\mu x^r \prod_{i=1}^s (x- \alpha_i)^{l_i},
$$
where $\mu \in k(q)$, $r\in\Z$, $l_1,...,l_s \in \Z$
and, for all $i=1,...,s$, the $\alpha_i$'s are nonzero elements of a fixed algebraic closure of $k(q)$, such that $q^\Z\a_i\cap q^\Z\a_j=\varnothing$ if $i\neq j$.
By Corollary \ref{cor:gatransCE}, the solutions of $y(qx)=a(x) y(x)$ will satisfy a nontrivial
algebraic differential equation in $\partial_2$ if and only if there exists $f \in C_E(x,\ell_q)$, $a_1,...,a_m\in C_E$
such that
$$
\sum_{i=0}^m a_i \partial_2^{i}\l(\frac{\partial_2(a(x))}{a(x)}\r) =f(qx)-f(x).
$$
We can suppose that $a_m=1$.
Notice that
$$
\frac{\partial_2(a(x))}{a(x)}
=\frac{\partial_2( x^r)}{x^r}
            +\frac{\delta_q(\mu)}{\mu}
            +\sum_{i=1}^{s} \frac{ \partial_2(x-\a_i)^{l_i}}{(x -\alpha_i)^{l_i}},
$$
where
$$
\frac{\partial_2( x^r)}{x^r} =r \ell_q(x)= \l(\frac{r}{2} (\ell_q^2-\ell_q)\r)(qx) - \l(\frac{r}{2} (\ell_q^2-\ell_q)\r) (x),
\hbox{~with $\ell_q^2-\ell_q\in C_E(x,\ell_q)$,}
$$
and
$$
\frac{\delta_q(\mu)}{\mu}=\l(\frac{\delta_q(\mu)}{2\mu}\ell_q\r)(qx)-\l(\frac{\delta_q(\mu)}{2\mu}\ell_q\r)(x),
\hbox{~with $\frac{\delta_q(\mu)}{2\mu}  \ell_q\in C_E(x,\ell_q)$.}
$$
It remains to show that a solution of  $y(qx)=a(x) y(x)$ satisfies a nontrivial
differential equation in $\partial_2$ if and only if there exists $h \in C_E(x,\ell_q)$  such that
\beq \label{eq:qdep}
\sum_{j=0}^m a_j \partial_2^{j} \left( \sum_{i=1}^{s} \frac{ l_i( x\ell_q(x) -\delta_q(\alpha_i))}{(x -\alpha_i)}\right) =h(qx)-h(x).
\eeq
If we prove that \eqref{eq:qdep} never holds,
we can conclude that a solution of  $y(qx)=a(x) y(x)$ satisfies a nontrivial
algebraic differential equation in $\partial_2$ with coefficients in $C_E(x,\ell_q)$
if and only if $a(x) = \mu  x^r$ (modulo the reduction done at the beginning of the proof).
For all $i=1,\dots,s$ and $j=0,\dots,m$, the fact that $\partial_2\ell_q(x)\in C_E$ allows to prove inductively that:
\beq \label{eq:derivexpr}
\partial_2^{j}
\left(\frac{l_i( x\ell_q(x) -\delta_q(\alpha_i))}{(x -\alpha_i)}\right)
= \frac{l_i (-1)^j j!( x\ell_q(x) -\delta_q(\alpha_i))^{j+1}}{(x -\alpha_i)^{j+1}} +\frac{h_{i,j}}{(x-\alpha_i)^j},
\eeq
for some $h_{i,j} \in C_E[ \ell_q]$.
\par
Since  $x$ is transcendental over $C_E(\ell_q)$, we  can consider $ f(x)=
\sum_{j=0}^m a_j \partial_2^{j} \left( \sum_{i=1}^{s} \frac{ l_i( x\ell_q(x) -\delta_q(\alpha_i))}{(x -\alpha_i)}\right) $
as a rational function in $x$ with coefficients in $C_E(\ell_q)$.
In the partial fraction decomposition of $f(x)$, the polar term in $\frac{1}{(x -\alpha_i)}$ of
highest order is $ \frac{l_i (-1)^{m} (m!)( x\ell_q(x) -\delta_q(\alpha_i))^{m+1}}{(x -\alpha_i)^{m+1}}$.
By the partial fraction decomposition theorem, identities \eqref{eq:qdep} and \eqref{eq:derivexpr} imply that
this last term appears
either in the decomposition of  $h(x)$ or in the decomposition of $h(qx)$.
In both cases, there exists $s \in \Z^*$ such that the  term $\frac{1}{(q^s x -\alpha_i)^{m+1}}$ appears in the partial
fraction decomposition of $h(qx)-h(x)$. This is in contradiction
with the assumption that the poles $\alpha_i$ of $a(x)$ satisfy  $q^\Z\a_i\cap q^\Z\a_j=\varnothing$ if $i\neq j$.
\end{proof}

\begin{rmk}
The Jacobi theta function is an illustration of the theorem above.
Notice that a meromorphic solution of the $q$-difference equations of the form $y(qx)=a(x)y(x)$, with $a(x) = \mu  x^r \frac{g(qx)}{g(x)}\in k(q,x)$
is given by $y(x)=\frac{\theta_q(\mu x/q^r)}{\theta_q(x)}\theta(x)^rg(x)\in\cM er (C^*)$.
\end{rmk}

\subsection{Integrability in $x$ and $q$}

Let $Y(qx)=AY(x)$ be a $q$-difference system with $A\in GL_\nu(C(x,\ell_q))$ and $\De=\{\partial_1,\partial_2\}$ as
in Lemma \ref{lemma:derivation2}. We deduce the following from Proposition \ref{prop:intergrabilitydescent}.

\begin{cor}\label{cor:qconf}
The Galois $\De$-group $Gal^\De(\cM_A)$ of $Y(qx)=AY(x)$ is $\De$-constant (see Remark \ref{rmk:Deconstant})
if and only if there exist two square matrices $B_1,B_2 \in M_\nu(\overline C_E(x,\ell_q))$ such that
the system
$$
\l\{
\begin{array}{l}
\sgq(Y)=AY(x)\\
\de_x Y=B_1 Y\\
\partial_2 Y=B_2Y
\end{array}\r.
$$
is integrable, in the sense that these matrices verify:
\begin{eqnarray}
&\sgq(B_1)= AB_1A^{-1} + \de_x(A).A^{-1},\label{sgqdex}\\
&\sgq(B_2)= AB_2A^{-1} + \partial_2(A).A^{-1},\label{sgqpartial2}\\
&\partial_2(B_1) + B_1B_2 + \de_x(\ell_q) B_1 =  \de_x(B_2) + B_2B_1.\label{dexpartial2}
\end{eqnarray}
\end{cor}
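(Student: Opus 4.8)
The plan is to \emph{deduce} the corollary from Proposition~\ref{prop:intergrabilitydescent} (more precisely, from its single-system ingredient Proposition~\ref{prop:simpletrans}) by trading the non-commuting pair $(\de_x,\partial_2)$ that appears in the statement for the commuting pair $(\partial_1,\partial_2)=(h\de_x,\partial_2)$ of Lemma~\ref{lemma:derivation2}. The bridge is the scalar $h\in\wtilde C_E$ of that lemma, which satisfies $\sgq(h)=h$ and $\partial_2(h)=\de_x(\ell_q)h$. First I would set up the dictionary $\wtilde B_1=hB_1$, $\wtilde B_2=B_2$, and check that the three conditions \eqref{sgqdex}, \eqref{sgqpartial2}, \eqref{dexpartial2} for $(\de_x,\partial_2)$ are \emph{equivalent} to the ordinary integrability conditions $\sgq(\wtilde B_i)=A\wtilde B_iA^{-1}+\partial_i(A)A^{-1}$ and $\partial_1(\wtilde B_2)+\wtilde B_2\wtilde B_1=\partial_2(\wtilde B_1)+\wtilde B_1\wtilde B_2$ for the commuting pair $(\partial_1,\partial_2)$. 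The only point requiring attention is that the extra summand $\de_x(\ell_q)B_1$ in \eqref{dexpartial2} is precisely the term produced by $\partial_2(h)=\de_x(\ell_q)h$ when one expands $\partial_2(\wtilde B_1)=\partial_2(hB_1)$; since $h$ is a $\sgq$-invariant scalar, multiplication by $h$ and by $h^{-1}$ turns the other two conditions into one another with no further work.

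With the dictionary in place, the equivalence splits into two directions. For the implication from the integrable system to the group, I would start from $B_1,B_2\in M_\nu(\ol C_E(x,\ell_q))$ satisfying \eqref{sgqdex}--\eqref{dexpartial2}, form $\wtilde B_1=hB_1$ and $\wtilde B_2=B_2$ in $M_\nu(\wtilde C_E(x,\ell_q))$, and observe that these make the $(\partial_1,\partial_2)$-system integrable over $\wtilde C_E(x,\ell_q)$, whose $\sgq$-constants $\wtilde C_E$ form a $\De$-closed field (established earlier). Proposition~\ref{prop:simpletrans} then shows that $Gal^\De(\cM_A)$ is conjugate over $\wtilde C_E$ to a constant $\De$-group, which is the property stated in the corollary. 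Conversely, if $Gal^\De(\cM_A)$ is $\De$-constant, Proposition~\ref{prop:simpletrans} furnishes $\wtilde B_1,\wtilde B_2\in M_\nu(\wtilde C_E(x,\ell_q))$ realizing the $(\partial_1,\partial_2)$-integrability, and the dictionary turns $B_1=h^{-1}\wtilde B_1$, $B_2=\wtilde B_2$ into a solution of \eqref{sgqdex}--\eqref{dexpartial2} --- but only with entries in $\wtilde C_E(x,\ell_q)$.

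The main obstacle is exactly this last point, and it is the reason one cannot quote Proposition~\ref{prop:intergrabilitydescent} verbatim: the commuting derivation $\partial_1=h\de_x$ is \emph{not} defined over $\ol C_E(x,\ell_q)$, since $h$ lives in $\wtilde C_E$, whereas the corollary insists on matrices over $\ol C_E(x,\ell_q)$. To remove $h$ I would repeat the algebraic descent used in the proof of Proposition~\ref{prop:intergrabilitydescent}. By the linear disjointness of $C_E(x,\ell_q)$ and $\wtilde C_E$ over $C_E$ recalled earlier, the elements $x,\ell_q$ form a transcendence basis of $\wtilde C_E(x,\ell_q)$ over $\wtilde C_E$, and likewise of $\ol C_E(x,\ell_q)$ over $\ol C_E$. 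Replacing the coefficients of the monomials in $x,\ell_q$ occurring in the entries of $B_1,B_2$ by indeterminates, clearing denominators in \eqref{sgqdex}--\eqref{dexpartial2}, and identifying the coefficients of the monomials in $x,\ell_q$, one converts the problem into a finite system of polynomial equations whose coefficients lie in $\ol C_E$ --- they are assembled from the entries of $A$, of $\partial_i(A)$ and from $\de_x(\ell_q)\in C_E$, together with the structure constants of $\de_x$ and $\partial_2$ on the monomials in $x,\ell_q$. This polynomial system has a solution with entries in the extension $\wtilde C_E$, namely the $B_i$ just produced; since $\ol C_E$ is algebraically closed, it must already have a solution with entries in $\ol C_E$, and reading the indeterminates back off gives the desired $B_1,B_2\in M_\nu(\ol C_E(x,\ell_q))$. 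This completes both the descent and the proof.
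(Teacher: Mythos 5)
Your proposal is correct and follows essentially the same route as the paper's proof: apply Proposition \ref{prop:simpletrans} over $\wtilde C_E(x,\ell_q)$ for the commuting pair $(\partial_1,\partial_2)=(h\de_x,\partial_2)$, translate via the $\sgq$-invariant scalar $h$ (with $\partial_2(h)=\de_x(\ell_q)h$) into conditions \eqref{sgqdex}--\eqref{dexpartial2}, and then descend from $\wtilde C_E$ to $\ol C_E$ by the indeterminate-coefficients argument of Proposition \ref{prop:intergrabilitydescent}, using that $x,\ell_q$ are algebraically independent over $\ol C_E$ so that $\ol C_E(x,\ell_q)\otimes_{\ol C_E}\wtilde C_E=\wtilde C_E(x,\ell_q)$. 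You in fact make explicit two points the paper leaves implicit, namely the verification of the $h$-dictionary and why the descent must be run on the $(\de_x,\partial_2)$-form of the equations (from which $h$ has disappeared) rather than by quoting Proposition \ref{prop:intergrabilitydescent} verbatim.
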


\begin{proof}
It follows from Proposition \ref{prop:simpletrans} that there exist
$B_1, B_2 \in M_\nu(\wtilde C_E(x,\ell_q))$
such that the system
$$
\l\{
\begin{array}{l}
\sgq(Y)=AY(x)\\
\partial_1 Y=hB_1 Y\\
\partial_2 Y=B_2Y
\end{array}\r.
$$
is integrable. This is easily seen to be equivalent to
\eqref{sgqdex}, \eqref{sgqpartial2} and \eqref{dexpartial2}.
Notice that, as in Proposition \ref{prop:intergrabilitydescent}, we have:
\begin{itemize}
\item
$x$ and $\ell_q$ are transcendental and algebraically independent over $\ol C_E$ (see for instance Theorem 3.6 in
\cite{hardouincompositio});
\item
$\sgq(x)=qx$ and $\sgq(\ell_q)=\ell_q+1$.
\end{itemize}
It follows that $\ol C_E(x,\ell_q)\otimes_{\ol C_E}\wtilde C_E=\wtilde C_E(x,\ell_q)$ and that, therefore,
the same argument as in the proof of Proposition \ref{prop:intergrabilitydescent} allows to conclude.
\end{proof}

\begin{exa}
We want to study the $q$-dependency of the $q$-difference system
\beq
\label{eq:diagconf} Y(qx) =\begin{pmatrix} \la &  \eta \\
0 & \la \end{pmatrix} Y(x),
\eeq
where $\la, \eta \in k(q,x)$.
First of all, the solutions of the equation $y(qx)=\la y(x)$ will admit a $\De$-relation if and only if
$\la=\mu x^r\frac{g(qx)}{g(x)}$ for some $r\in\Z$, $g\in k(q,x)$ and $\mu\in k(q)$ (see Theorem \ref{thm:qconfluencerank1}).
To simplify the exposition we can suppose that $\la=\mu x^{r}$.
Then clearly the Galois $\De$-group of $y(qx)=\la y(x)$ is $\De$-constant if and only if
$\mu\in k(q)$ is a power of $q$, \ie if and only if $\mu\in q^\Z$. Let $\mu=q^s$ for $s\in\Z$.
A solution of $y(qx)=\la y(x)$ is given by
$$
y(x)=x^{s-r}\theta_q(x)^r.
$$
To obtain an integrable system
$$
y(qx)=\mu x^r y(x),~\de_x y=b_1 y,~\partial_2 y=b_2 y,
$$
satisfying \eqref{sgqdex}, \eqref{sgqpartial2} and \eqref{dexpartial2}, it is enough to take:
$$
b_1=\frac{\de_x(y)}{y},~b_2=\frac{\partial_2(y)}{y}.
$$
If $\mu=q^s\in q^\Z$ then $b_1=r\ell_q(x)+s-r\in C_E(x,\ell_q(x))$.
The same hypothesis on $\mu$ implies that $b_2\in C_E(x,\ell_q(x))$, in fact we have:
$$
\sgq\l(\frac{\partial_2(\theta_q(x))}{\theta_q(x)}\r)=(\ell_q(x)+1)+\frac{\partial_2(\theta_q(x))}{\theta_q(x)},
$$
which implies that
$$
\frac{\partial_2(\theta_q(x))}{\theta_q(x)}=\frac{\ell_q(x)(\ell_q(x)+1)}{2}+e(x),
\hbox{~for some $e(x)\in C_E$.}
$$
To go back to the initial system, we have to find
$$
B_1=\begin{pmatrix}b_1 & \a(x)\\0&b_1\end{pmatrix},
B_2=\begin{pmatrix}b_2 & \be(x)\\0&b_2\end{pmatrix}
\in M_2(\ol C_E(x,\ell_q)),
$$
satisfying
\eqref{sgqdex}, \eqref{sgqpartial2} and \eqref{dexpartial2}.
We find that
the $q$-difference system \eqref{eq:diagconf} has constant Galois $\De$-group if and only if there exist $\a(x),\be(x)\in \overline C_E(x,\ell_q)$ such that
\begin{itemize}
\item $\de_x (\eta) = r \eta  +(\a(qx)-\a(x)) \mu x^{r}$;
\item $\de_q( \eta) = \frac{\de_q \mu}{\mu} \eta + \left( \be(qx)-\be(x) + \ell_q (\a(qx) - \a(x)) \right)\mu x^{r}$.
\item $\partial_2(\a(x)) + \de_x(\ell_q) \a(x)= \de_x (\be(x))$.
\end{itemize}
\end{exa}

\begin{rmk}
In \cite{PulitaComp}, Pulita shows that given a $p$-adic differential equations $\frac{dY}{dx}=GY$ with coefficients in some classical algebras of functions,
or a $q$-difference equations $\sgq(Y)=AY$, one can always complete it in an integrable system:
$$
\l\{
\begin{array}{l}
\sgq(Y)=AY\\
\frac{dY}{dx}=GY\\
\frac{dY}{dq}=0
\end{array}
\r..
$$
This is not the case in the complex framework, even extending the coefficients from $C_E(\ell_q,x)$ to
$\cM er(C^*)$.
\end{rmk}


\end{document}